\newtheorem{thm}{Theorem}
\newtheorem{prop}[thm]{Proposition}
\newtheorem{lem}[thm]{Lemma}
\newtheorem{cor}[thm]{Corollary}
\newtheorem{rem}[thm]{Remark}
\newcommand{\beqn}{\begin{equation}}
\newcommand{\eeqn}{\end{equation}}
\newcommand{\bear}{\begin{eqnarray}}
\newcommand{\eear}{\end{eqnarray}}
\newcommand{\bean}{\begin{eqnarray*}}
\newcommand{\eean}{\end{eqnarray*}}
\renewcommand\qed{\hfill $\square$}
\newcommand{\NN}{\mathbb N}
\renewcommand{\P}{\mathbb{P}}
\renewcommand{\L}{\mathcal{L}}
\providecommand{\E}{\mathbb{E}}
\providecommand{\F}{\mathcal{F}}
\providecommand{\N}{\mathbb{N}}
\newcommand{\e}{\varepsilon}
\newcommand{\ee}{\mathbf{e}}
\def\Xun{\mathcal{X}_{1,1}}
\def\Xuni{\mathcal{X}_{1,i}}
\begin{document}

\title{\textbf{A stochastic min-driven coalescence process and its hydrodynamical limit}}

\author{
\normalsize{
Anne-Laure \textsc{Basdevant}\footnotemark[1], Philippe \textsc{Lauren\c cot}\footnotemark[1], James R. \textsc{Norris}\footnotemark[2]  \;and Cl\'ement \textsc{Rau}\footnotemark[1]}
}
\footnotetext[1]{Institut de Math\'ematiques de Toulouse, CNRS UMR~5219, Universit\'e de Toulouse, F--31062 Toulouse cedex 9, France. E-mail: {\tt abasdeva@math.ups-tlse.fr, laurenco@mip.ups-tlse.fr, rau@math.ups-tlse.fr}}
\footnotetext[2]{Statistical Laboratory, Centre for Mathematical Sciences, Wilberforce Road, Cambridge CB3 0WB, United Kingdom. E-mail: {\tt j.r.norris@statslab.cam.ac.uk}}
\date{\today}
\maketitle \vspace{-1cm}

\vspace*{0.2cm}

\begin{abstract} A stochastic system of particles is considered in which the sizes of the particles increase by successive binary mergers with the constraint that each coagulation event involves a particle with minimal size. Convergence of a suitably renormalised version of this process to a deterministic hydrodynamical limit is shown and the time evolution of the minimal size is studied for both deterministic and stochastic models.
\end{abstract}

\bigskip
{\small{
 \noindent{\bf Keywords. } stochastic coalescence, min-driven clustering, hydrodynamical limit.

\bigskip
\noindent{\bf A.M.S. Classification. } 82C22, 60K35, 60H10, 34A34, 34C11.

\bigskip

%%%%%%%%%%%%%%%%%%%%%%%%%%%%%%%%%%%%%%%%%%%%%%%%%%%%%%%%%%%%%%%%%%%%%%%%%%%%%%%%%%
%%%%%%%%%%%%%%%%%%%%%%%%%%%%%%%%%%%%%%%%%%%%%%%%%%%%%%%%%%%%%%%%%%%%%%%%%%%%%%%%%%
\section{Introduction}
%%%%%%%%%%%%%%%%%%%%%%%%%%%%%%%%%%%%%%%%%%%%%%%%%%%%%%%%%%%%%%%%%%%%%%%%%%%%%%%%%%
%%%%%%%%%%%%%%%%%%%%%%%%%%%%%%%%%%%%%%%%%%%%%%%%%%%%%%%%%%%%%%%%%%%%%%%%%%%%%%%%%%
\setcounter{thm}{0}
\setcounter{equation}{0}
Coagulation models describe the evolution of a population of particles increasing their sizes by successive binary mergers, the state of each particle being fully determined by its size. Well-known examples of such models are the Smoluchowski coagulation equation \cite{Sm16,Sm17} and its stochastic counterpart, the Marcus-Lushnikov process \cite{Lu78,Ma68}, and both have been extensively studied in recent years (see \cite{Al99,Be06,LM04,Lv03,No99,Wa06} and the references therein). Another class of coagulation models has also received some interest, the main feature of these models being that the particles with the smallest size play a more important role than the others. A first example are the Becker-D\"oring equations: in that case, the (normalized) sizes of the particles range in the set of positive integers and a particle can only modify its size by gaining or shedding a particle with unit size \cite{BCP86}. Another example are the min-driven coagulation equations: given
 a positive integer $k$, at each step of the process, a particle with the smallest size $\ell$ is chosen and broken into $k$  daughter particles with size $\ell/k$, which are then pasted to other particles chosen at random in the population with equal probability \cite{CP00,DGY91,GM03,MNPxx}.

In this paper, we focus on the min-driven coagulation equation with $k=1$ (that is, there is no break-up of the particle of minimal size) but relax the assumption of deposition with equal probability. More specifically, the coalescence mechanism we are interested in is the following: consider an initial configuration $X=(X_i)_{i\ge 1}$ of particles, $X_i$ denoting the number of particles of size $i\ge 1$, and define the minimal size $\ell_X$ of $X$ as the smallest integer $i\ge 1$ for which $X_i>0$ (that is, $X_{\ell_X}>0$ and $X_i=0$ for $i\in\{1,\ldots,\ell_X-1\}$ if $\ell_X>1$). We pick a particle of size $\ell_X$, choose at random another particle of size $j\ge\ell_X$ according to a certain law, and merge the two particles to form a particle of size $\ell_X+j$. The system of particles thus jumps from the state $X$ to the state $Y=(Y_i)_{i\ge 1}$ given by $Y_k=X_k$ if $k\not\in\{\ell_X,j,\ell_X+j\}$ and
$$
\begin{array}{lllcl}
Y_{\ell_X} = X_{\ell_X}-1\,, & Y_j = X_j-1\,, & Y_{\ell_X+j}=X_{\ell_X+j}+1 & \mbox{ if } & j>\ell_X\,, \\
Y_{\ell_X} = X_{\ell_X}-2\,, & & Y_{2\ell_X}=X_{2\ell_X}+1 & \mbox{ if } & j=\ell_X\,,
\end{array}
$$
Observe that no matter is lost during this event. It remains to specify the probability of this jump to take place: instead of assuming it to be uniform and independent of the sizes of the particles involved in the coalescence event as in \cite{DGY91}, we consider the more general case where the jump from the state $X$ to the state $Y$ occurs at a rate $K(\ell_X,j)$, the coagulation kernel $K$ being a positive and symmetric function defined in $(\N\setminus\{0\})^2$.

A more precise description of the stochastic process is to be found in the next section, where a renormalized version of this process is also introduced. We will show that, as the total mass diverges to infinity, the renormalized process converges towards a deterministic limit which solves a countably infinite system of ordinary differential equations (Theorem~\ref{thc3}). The convergence holds true provided the coagulation kernel $K(i,j)$ does not increase too fast as $i,j\to\infty$, a typical example being
\begin{equation}
\label{obelix}
K(i,j) = \phi(i)\wedge\phi(j)\,, \quad i,j\ge 1\,, \;\;\;\mbox{ for some positive and non-decreasing function }\; \phi\,.
\end{equation}
Well-posedness of the system solved by the deterministic limit is also investigated (Theorem~\ref{thc1}) and reveals an interesting phenomenon, namely the possibility that the minimal size becomes infinite in finite time according to the growth of $K$ (Theorem~\ref{thc2}). Such a property also shows up for the stochastic min-driven coagulation process in a suitable sense (Theorem~\ref{thc4}). It is worth pointing out that coagulation kernels $K$ of the form \eqref{obelix} play a special role here.
%
%%%%%%%%%%%%%%%%%%%%%%%%%%%%%%%%%%%%%%%%%%%%%%%%%%%%%%%%%%%%%%%%%%%%%%%%%%%%%%%%%%
%%%%%%%%%%%%%%%%%%%%%%%%%%%%%%%%%%%%%%%%%%%%%%%%%%%%%%%%%%%%%%%%%%%%%%%%%%%%%%%%%%
\subsection{The stochastic min-driven coagulation process}\label{secdescrip}
%%%%%%%%%%%%%%%%%%%%%%%%%%%%%%%%%%%%%%%%%%%%%%%%%%%%%%%%%%%%%%%%%%%%%%%%%%%%%%%%%%
%%%%%%%%%%%%%%%%%%%%%%%%%%%%%%%%%%%%%%%%%%%%%%%%%%%%%%%%%%%%%%%%%%%%%%%%%%%%%%%%%%

We now describe more precisely the stochastic min-driven coagulation process to be studied in this paper. It is somehow reminiscent of the Marcus-Lushnikov process \cite{Lu78,Ma68} (which is related to the Smoluchowski coagulation equation). As in this process, two particles are chosen at random according to a certain law and merged but there is here an additional constraint; namely, one of the particles involved in the coalescence event has to be of minimal size among all particles in the system. To be more precise, we fix some positive integer $N$ and an initial condition $X_0^N=(X_{i,0}^N)_{i\ge 1}\in \ell_\NN^1$ such that
\begin{equation}\label{asterix}
\sum_{i=1}^\infty i\ X_{i,0}^N = N\,,
\end{equation}
where $X_{i,0}^N$ is the number of particles of size $i\ge 1$ and $\ell_\NN^1$ denotes the space of summable nonnegative and integer-valued sequences
\begin{equation}
\label{idefix}
\ell_\NN^1 := \left\{ X_0=(X_{i,0})_{i\ge 1} \in\ell^1(\NN\setminus\{0\}) \ : \ X_{i,0}\in\NN \;\;\;\mbox{ for all }\;\;\; i\ge 1\right\}\,.
\end{equation}
We next consider a time-dependent random variable $X^N(t)=(X_i^N(t))_{i\ge 1}$ which encodes the state of the process at time $t$ starting from the configuration $X_0^N$, its $i^{\hbox{\tiny{th}}}$-component $X_i^N(t)$ standing for the number of particles of size $i\ge 1$ at time $t\ge 0$. We assume that $X^N(0)=X_0^N$, so that $N$ is equal to the total mass initially present in the system. The process $(X^N(t))_{t\ge 0}$ evolves then as a Markov process with the following transition rules: if, at a time $t$, the process is in the state $X^N(t)=X=(X_i)_{i\ge 1}$ with minimal size $\ell_X\ge 1$ (that is, $X_{\ell_X}>0$ and $X_i=0$ for $1\le i \le \ell_X-1$ if $\ell_X>1$), only a given particle among the $X_{\ell_X}$ particles of minimal size $\ell_X$ can coalesce with another particle and this coagulation event occurs at the rate $K(\ell_X,j)$, where $j\ge\ell_X$ is the size of the second particle involved in the coagulation. Mathematically, this means that the process jumps f
 rom the state $X^N(t)=X$ to a state of the form
$$
Y=(0,\ldots,0,X_{\ell_X}-1,X_{\ell_X+1},\ldots,X_j-1,\ldots,X_{\ell_X+j}+1,\ldots) \;\;\;\mbox{ with rate }\;\;\; K(\ell_X,j) X_j
$$
for some $j>\ell_X$ or to the state
$$
Z=(0,\ldots,0,X_{\ell_X}-2,X_{\ell_X+1},\ldots,X_{2\ell_X}+1,\ldots) \;\;\;\mbox{ with rate }\;\;\; K(\ell_X,\ell_X) (X_{\ell_X}-1)\,.
$$
Equivalently, this means that the process waits an exponential time of parameter
$$
\lambda_X:=\left(\sum_{j=\ell_X}^\infty K(\ell_X,j) X_j\right) - K(\ell_X,\ell_X)
$$
and then jumps to the state $Y$ with probability $K(\ell_X,j) X_j/\lambda_X$ for $j>\ell_X$ and to the state $Z$ with probability $K(\ell_X,\ell_X) (X_{\ell_X}-1)/\lambda_X$. Observe that, as $X_{\ell_X}$ could be equal to $1$ or $2$, there might be no particle of size $\ell_X$ after this jump and the minimal size thus increases. In addition, we obviously have
$$
\sum_{i=1}^\infty i\ Y_i = \sum_{i=1}^\infty i\ Z_i = \sum_{i=1}^\infty i\ X_i\,,
$$
so that the total mass contained in the system of particles does not change during the jumps. Consequently,
\begin{equation}
\label{ielosubmarine}
\sum_{i=1}^\infty i\ X_i^N(t) = \sum_{i=1}^\infty i\ X_{i,0}^N = N \;\;\; \mbox{ for all }\;\;\; t\ge 0\,.
\end{equation}

\medskip

As already mentioned, one aim of this paper is to prove that, under some assumptions on the coagulation kernel $K$ and the initial data $(X_0^N)_{N\ge 1}$, a suitably renormalised version of the stochastic process converges to a deterministic limit as $N$ tends to infinity. More precisely, we introduce $\tilde{X}^N:=X^N/N$ and, for further use, list some properties of this process. Owing to the above construction, the generator $\L^N=(\L_k^N)_{k\ge 1}$ of this renormalised process  reads
\begin{eqnarray}
(\L^N_k f)(\xi) & = & N\ \left(\sum_{j=\ell_\xi}^\infty K(\ell_\xi,j)\ \xi_j\ \left[ f_k\left( \xi+\frac{\mathbf{e}_{\ell_\xi+j}}{N} - \frac{\mathbf{e}_{\ell_\xi}}{N} - \frac{\mathbf{e}_{j}}{N} \right) - f_k(\xi) \right] \right) \label{gen} \\
& & -\ K(\ell_\xi,\ell_\xi)\ \left[ f_k\left( \xi + \frac{\mathbf{e}_{2\ell_\xi}}{N} - 2\ \frac{\mathbf{e}_{\ell_\xi}}{N} \right)-f_k(\xi) \right]\,,\nonumber
\end{eqnarray}where $f=(f_k)_{k\ge 1}:\ell^1(\NN\setminus\{0\})\to\ell^1(\NN\setminus\{0\})$ and $(\mathbf{e}_i)_{i\ge 1}$ denotes the canonical basis of $\ell^1(\NN\setminus\{0\})$.
Moreover, the quadratic variation $\mathcal{Q}^N=(\mathcal{Q}_k^N)_{k\ge 1}$ of the martingale
$$
f(\tilde{X}^N(t))-\int_0^t(\L^N f)(\tilde{X}^N(s))ds
$$
is
\begin{eqnarray}
(\mathcal{Q}_k^N f)(\xi) & = & N\ \left(\sum_{j=\ell_\xi}^\infty K(\ell_\xi,j)\ \xi_j\ \left[ f_k\left( \xi + \frac{\mathbf{e}_{\ell_\xi+j}}{N} - \frac{\mathbf{e}_{\ell_\xi}}{N} - \frac{\mathbf{e}_{j}}{N} \right) -f_k\left( \xi \right) \right]^2\right) \label{QN}\\
& & -\ K(\ell_\xi,\ell_\xi)\ \left[ f_k\left( \xi+\frac{\mathbf{e}_{2\ell_\xi}}{N} - \frac{2\mathbf{e}_{\ell_\xi}}{N} \right)-f_k\left(\xi\right) \right]^2\,.  \nonumber
\end{eqnarray}

Let $\tilde{\beta}(\xi)$ be the drift of the process $\tilde{X}^N$ when it is in state $\xi$, so that
$$
\tilde{\beta}(\xi) := \sum _{\xi'\neq \xi} q(\xi,\xi')\ (\xi'-\xi)\,,
$$
where $q(\xi,\xi')$ is the jump rate from $\xi$ to $\xi'$. Taking $f=id$ in \eqref{gen} leads to the following formula for the drift
\begin{equation}
\label{eq2}
\left\{
\begin{array}{lcl}
\tilde{\beta}_j(\xi) := 0 & \mbox{ if } & 1 \le j \le \ell_\xi-1\,, \\
& & \\
\displaystyle{\tilde{\beta}_{\ell_\xi}(\xi) := -\sum_{j=\ell_\xi+1}^\infty K(\ell_\xi,j)\ \xi_j - 2\ K(\ell_\xi,\ell_\xi)\ \xi_{\ell_\xi}+\frac{2}{N}\ K(\ell_\xi,\ell_\xi)}\,, & & \\
& & \\
\tilde{\beta}_j(\xi) := K(\ell_\xi,j-\ell_\xi)\ \xi_{j-\ell_\xi} - K(\ell_\xi,j)\ \xi_j & \mbox{ if } &  j \ge \ell_\xi+1, \;j\neq 2\ell_\xi\,,\\
& & \\
\displaystyle{\tilde{\beta}_{2\ell_\xi}(\xi) := K(\ell_\xi,\ell_\xi)\ \left( \xi_{\ell_\xi}-\frac{1}{N} \right)  - K(\ell_\xi,2\ell_\xi)\ \xi_{2\ell_\xi}\,.} & &
\end{array}
\right.
\end{equation}
We also define
\begin{equation}\label{defalpha}
\tilde{\alpha}(\xi) := \sum_{\xi'\neq \xi}q(\xi,\xi')\ \|\xi'-\xi\|_2^2 = \sum_{j=1}^\infty \sum_{\xi'\ne \xi} q(\xi,\xi')\ \left| \xi_j' - \xi_j \right|^2\,.
\end{equation}
It can be written in the form
$$
\tilde{\alpha}(\xi)=\sum_{j=1}^{\infty} \tilde{\alpha}_j(\xi)\,,
$$
where $\tilde{\alpha}_j$ is obtained by taking $f(\xi)=\xi_j \mathbf{e}_j$ in \eqref{QN}, so that
\begin{equation}
\label{eq2vq}
\left\{
\begin{array}{lcl}
\tilde{\alpha}_j(\xi) := 0 & \mbox{ if } & 1 \le j \le \ell_\xi-1\,, \\
& & \\
\displaystyle{ \tilde{\alpha}_{\ell_\xi}(\xi) := \frac{1}{N}\sum_{j=\ell_\xi+1}^\infty K(\ell_\xi,j)\ \xi_j + \frac{4}{N}\ K(\ell_\xi,\ell_\xi)\ \xi_{\ell_\xi} - \frac{4}{N^2}\ K(\ell_\xi,\ell_\xi)}\,, & & \\
& & \\
\displaystyle{ \tilde{\alpha}_j(\xi) := \frac{1}{N}\ K(\ell_\xi,j-\ell_\xi)\ \xi_{j-\ell_\xi} + \frac{1}{N}\ K(\ell_\xi,j)\ \xi_j} & \mbox{ if } &  j \ge \ell_\xi+1, \;j\neq 2\ell_\xi,\\
& & \\
\displaystyle{ \tilde{\alpha}_{2\ell_\xi}(\xi) := \frac{1}{N}\ K(\ell_\xi,\ell_\xi)\ \left( \xi_{\ell_\xi} -\frac{1}{N} \right) + \frac{1}{N}\ K(\ell_\xi,2\ell_\xi)\ \xi_{2\ell_\xi} \,.} & &
\end{array}
\right.
\end{equation}
%%%%%%%%%%%%%%%%%%%%%%%%%%%%%%%%%%%%%%%%%%%%%%%%%%%%%%%%%%%%%%%%%%%%%%%%%%%%%%%%%%
%%%%%%%%%%%%%%%%%%%%%%%%%%%%%%%%%%%%%%%%%%%%%%%%%%%%%%%%%%%%%%%%%%%%%%%%%%%%%%%%%%
\subsection{Main results}
%%%%%%%%%%%%%%%%%%%%%%%%%%%%%%%%%%%%%%%%%%%%%%%%%%%%%%%%%%%%%%%%%%%%%%%%%%%%%%%%%%
%%%%%%%%%%%%%%%%%%%%%%%%%%%%%%%%%%%%%%%%%%%%%%%%%%%%%%%%%%%%%%%%%%%%%%%%%%%%%%%%%%

For $p\in [1,\infty)$, let $\ell^p$ be the Banach space of $p$-summable real-valued sequences
$$
\ell^p:=\left\{ x=(x_i)_{i\ge 1}\ : \ \|x\|_p := \left( \sum_{i=1}^\infty |x_i|^p \right)^{1/p} < \infty \right\}\,.
$$
We next define the space $\mathcal{X}_{1,1}$ of real-valued sequences with finite first moment by
\beqn
\label{spirou}
\mathcal{X}_{1,1}:= \left\{ x=(x_i)_{i\ge 1} \ : \ \|x\|_{1,1}:=\sum_{i=1}^\infty i\ |x_i| < \infty \right\}\,,
\eeqn
which is a Banach space for the norm $\|.\|_{1,1}$, and its positive cone
$$
\mathcal{X}_{1,1}^+:= \left\{ x=(x_i)_{i\ge 1}\in \mathcal{X}_{1,1} \ : \ x_i\ge 0 \;\;\mbox{ for }\;\; i\ge 1 \right\}\,.
$$
For $m\ge 2$, let $\mathcal{X}_{1,m}$ be the subspace of $\mathcal{X}_{1,1}$ of sequences having their $m-1$ first components equal to zero, namely
\beqn
\label{fantasio}
\mathcal{X}_{1,m} := \left\{ x=(x_i)_{i\ge 1}\in \mathcal{X}_{1,1} \ : \ x_i= 0 \;\;\mbox{ for }\;\; i\in\{ 1, \ldots, m-1\} \right\}\,,
\eeqn
and $\mathcal{X}_{1,m}^+:=\mathcal{X}_{1,m}\cap \mathcal{X}_{1,1}^+$.

We assume that there is $\kappa>0$ such that
\beqn
\label{c1}
0 \le K(i,j) = K(j,i) \le \kappa\ i\ j\,, \quad i,j\ge 1\,, \;\;\mbox{ and }\;\; \delta_i:=\inf_{j\ge i}{\{K(i,j)\}}> 0 \;\;\mbox{ for }\;\; i\ge 1\,.
\eeqn
Next, for $i\ge 1$, we define the function $b^{(i)}=\left( b_j^{(i)} \right)_{j\ge 1}$ on $\mathcal{X}_{1,1}$ by
\beqn
\label{spip}
\left\{
\begin{array}{lcl}
b_j^{(i)}(x) := 0 & \mbox{ if } & 1 \le j \le i-1\,, \\
b_i^{(i)}(x) := \displaystyle{- 2\ K(i,i)\ x_i - \sum_{j=i+1}^\infty K(i,j)\ x_j}\,, & & \\
b_j^{(i)}(x) := K(j-i,i)\ x_{j-i} - K(i,j)\ x_j & \mbox{ if } &  j \ge i+1\,.
\end{array}
\right.
\eeqn
Let us point out here that $b^{(i)}(x)$ is closely related to the drift $\tilde{\beta}(x)$ defined by \eqref{eq2} for $x\in\mathcal{X}_{1,i}$.

Consider an initial condition $x_0=(x_{i,0})_{i\ge 1}$ such that
\beqn
\label{c3}
x_0\in \mathcal{X}_{1,1}^+ \;\;\mbox{ with }\;\; x_{1,0}>0 \;\;\;\mbox{ and }\;\;\; \|x_0\|_{1,1}=1\,.
\eeqn

%%%%%%%%%%%%%%%%%%%%%%%%%%%%%%%%%%%%%%%%%%%%%%%%%%%%%%%%%%%%%%%%%%%%%%%%%%%%%%%%%%
\begin{thm}\label{thc1}
Assume that the coagulation kernel $K$ and the initial condition $x_0$ satisfy \eqref{c1} and \eqref{c3}, respectively.
There is a unique pair of functions $(\ell,x)$ fulfilling the following properties:\\
\noindent (i) there is an increasing sequence of times $(t_i)_{i\ge 0}$ with $t_0=0$ such that
$$
\ell(t):=i \;\;\mbox{ for }\;\; t\in [t_{i-1},t_i) \;\;\mbox{ and }\;\; i\ge 1\,.
$$
 We define
\beqn
\label{c4}
t_\infty := \sup_{i\ge 0}{t_i} = \lim_{i\to\infty} t_i \in (0,\infty]\,.
\eeqn
\noindent (ii) $x=(x_i)_{i\ge 1}\in \mathcal{C}([0,t_\infty);\mathcal{X}_{1,1})$ satisfies $x(0)=x_0$,
\beqn
\label{c4b}
x(t)\in \mathcal{X}_{1,\ell(t)}^+\setminus\mathcal{X}_{1,\ell(t)+1} \;\;\mbox{ for }\;\; t\in [0,t_\infty)\,,
\eeqn
and solves
\begin{equation}
\frac{dx}{dt}(t)  = b^{(\ell(t))}(x(t)) \;\;\mbox{ for }\;\; t\in [0,t_\infty)\setminus\{t_i\ :\ i\ge 0 \}\,.\label{c5a}
\end{equation}
In addition,
\beqn
\label{c6}
x_j(t)>0 \;\;\mbox{ for }\;\; t\in (t_{i-1},t_i] \;\;\mbox{ and }\;\; j\ge i+1
\eeqn
and
\beqn
\label{c7}
\|x(t)\|_{1,1} = \|x_0\|_{1,1} = 1 \;\;\mbox{ for }\;\; t\in [0,t_\infty)\,.
\eeqn
\end{thm}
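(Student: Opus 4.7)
My plan is to construct $(\ell,x)$ inductively on successive intervals $[t_{i-1},t_i)$ on each of which $\ell\equiv i$. I set $t_0=0$ and observe that the initial minimal size is $1$ by \eqref{c3}. Given $t_{i-1}\ge 0$ and a value $y:=x(t_{i-1})\in\mathcal{X}_{1,i}^+\setminus\mathcal{X}_{1,i+1}$, I solve the \emph{linear} Cauchy problem $\dot z=b^{(i)}(z)$ with $z(t_{i-1})=y$ up to the first time $t_i$ when $z_i$ vanishes, and set $x:=z$ on $[t_{i-1},t_i)$. Three ingredients are required: (a) well-posedness, nonnegativity, and mass conservation for the linear ODE; (b) finiteness of the extinction time $t_i$; (c) $z_j(t)>0$ for $t\in(t_{i-1},t_i]$ and $j\ge i+1$, so the induction continues.

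For (a), the operator $b^{(i)}$ is unbounded on $\mathcal{X}_{1,1}$ since \eqref{c1} only gives $K(i,j)\le\kappa ij$. I would approximate by the finite-dimensional linear system obtained by discarding sizes $j>I$; this is well-posed by Cauchy--Lipschitz, preserves nonnegativity because each $b_j^{(i)}$ sends the face $\{z_j=0\}$ into the positive half-space, and has first moment bounded by $\|y\|_{1,1}=1$ since truncation only leaks mass to sizes $>I$. A standard passage to the limit $I\to\infty$ based on Arzel\`a--Ascoli and diagonal extraction yields a nonnegative solution $z$ of the full equation. A direct calculation exploiting the symmetry of $K$ and the explicit form \eqref{spip} gives $\sum_{j\ge i}j\dot z_j(t)=0$, so $\|z(t)\|_{1,1}=1$. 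Uniqueness then follows from a Gronwall estimate on $\phi(t):=\sum_{j\ge i}j|z_j(t)-\tilde z_j(t)|$.

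For (b), which I regard as the crucial step, introduce the total-particle count $M(t):=\sum_{j\ge i}z_j(t)$ and the minimal-size fraction $U(t):=z_i(t)/M(t)$; mass conservation gives $iM(t)\le 1$ and, as some $z_j$ is necessarily positive, $M(t)>0$, so $U$ is well defined. Writing $R(t):=\sum_{j>i}K(i,j)z_j(t)$, one has $\dot z_i=-2K(i,i)z_i-R$ and $\dot M=-K(i,i)z_i-R$, whence a short computation yields
\[
M\,\dot U=-K(i,i)\,z_i\,(2-U)-R\,(1-U).
\]
Using $K(i,i)\ge\delta_i$, the bound $R\ge\delta_i(M-z_i)=\delta_i M(1-U)$, and $z_i=UM$, together with the algebraic identity $U(2-U)+(1-U)^2\equiv 1$, I get
\[
\dot U(t)\le -\delta_i\,\bigl[U(2-U)+(1-U)^2\bigr]=-\delta_i,
\]
so $U$, and therefore $z_i$, reaches zero at some time $t_i\le t_{i-1}+1/\delta_i$.

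For (c) and the uniqueness statement, the Duhamel representation
\[
z_j(t)=z_j(t_{i-1})e^{-K(i,j)(t-t_{i-1})}+\int_{t_{i-1}}^t K(j-i,i)\,z_{j-i}(s)\,e^{-K(i,j)(t-s)}\,ds,\qquad j>i,
\]
propagates positivity by induction on $j$: sizes $j\in\{i+1,\dots,2i-1\}$ have vanishing source ($z_{j-i}\equiv 0$) and remain positive thanks to $z_j(t_{i-1})>0$, granted by the inductive hypothesis from the previous interval (the base case $i=1$ is vacuous, and the induction there bootstraps from $x_{1,0}>0$); sizes $j\ge 2i$ inherit positivity from the already-treated $z_{j-i}$ via the strictly positive integral contribution. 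Consequently $x(t_i)\in\mathcal{X}_{1,i+1}^+\setminus\mathcal{X}_{1,i+2}$, the induction proceeds, and $t_\infty:=\sup_i t_i\in(0,\infty]$. Uniqueness of $(\ell,x)$ is then automatic: condition \eqref{c4b} forces $\ell(t)$ to coincide with the minimal index in the support of $x(t)$, and step (a) identifies $x$ on each interval. The main difficulty is the extinction argument in (b), whose clean form depends crucially on the structural lower bound $\delta_i>0$ assumed in \eqref{c1}.
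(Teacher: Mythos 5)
Your overall architecture coincides with the paper's: induction on the minimal size, a linear infinite system on each interval $[t_{i-1},t_i)$, positivity propagated by the variation-of-constants formula, finiteness of each extinction time via a monotone ratio, and uniqueness by Gronwall plus the observation that \eqref{c4b} pins down $\ell$. Your step (b) is, however, a genuinely different (and correct) Lyapunov argument: you track the \emph{number fraction} $U=z_i/M_0$ of minimal particles, whereas the paper tracks the ratio $M_{-1}/M_0$ with $M_{-1}=\sum_j z_j/j$ and shows $\frac{d}{dt}(M_{-1}/M_0)\le -\delta_i/(2i)$. Your computation $M\dot U=-K(i,i)z_i(2-U)-R(1-U)$ together with $K(i,j)\ge\delta_i$ and the identity $U(2-U)+(1-U)^2\equiv 1$ is clean and gives the comparable bound $t_i-t_{i-1}\le 1/\delta_i$; both arguments rest on exactly the same structural hypothesis $\delta_i>0$ from \eqref{c1}, so neither buys extra generality, but yours avoids introducing the $-1$ moment.

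There is one real gap, in step (a). Under \eqref{c1} alone one only has $K(i,j)\le\kappa ij$, so the series $\sum_{j>i}K(i,j)z_j$ in the equation for $z_i$ is controlled by the first moment and nothing better. After your finite-size truncation, Arzel\`a--Ascoli gives componentwise convergence $z^I_j\to z_j$ and the uniform bound $\sum_j jz^I_j\le 1$, but this only yields, via Fatou, $\liminf_I\sum_{j\le I}K(i,j)z^I_j\ \ge\ \sum_j K(i,j)z_j$: mass (and coagulation flux) may escape to infinity along the truncation, in which case the limit is merely a supersolution of the $z_i$-equation and mass conservation \eqref{c7} fails. To close this you need tail control uniform in $I$, i.e.\ uniform integrability of the first moment; the paper obtains it by propagating a superlinear de la Vall\'ee-Poussin moment $\sum_j\zeta(j)|z_j|$ (available because $x_0\in\mathcal{X}_{1,1}$) through a Gronwall estimate, and the same device would repair your argument. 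A further, minor, imprecision: your quasi-positivity claim ``each $b^{(i)}_j$ sends the face $\{z_j=0\}$ into the positive half-space'' is false for $j=i$, where $b^{(i)}_i\le 0$ on $\{z_i=0\}$; this is harmless only because you stop the construction at the first zero of $z_i$, and you should say so.
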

%%%%%%%%%%%%%%%%%%%%%%%%%%%%%%%%%%%%%%%%%%%%%%%%%%%%%%%%%%%%%%%%%%%%%%%%%%%%%%%%%%

In other words, for each $i\ge 1$, $x(t)\in \mathcal{X}_{1,i}^+$ and $x_i(t)>0$ for $t\in [t_{i-1},t_i)$ and $dx(t)/dt=b^{(i)}(x(t))$ for $t\in (t_{i-1},t_i)$. Given $t\in [0,t_\infty)$, Theorem~\ref{thc1} asserts that $x(t)\in \mathcal{X}_{1,\ell(t)}^+$ with $x_{\ell(t)}(t)>0$, so that $\ell(t)$ is the minimal size of the particles at time $t$.

%%%%%%%%%%%%%%%%%%%%%%%%%%%%%%%%%%%%%%%%%%%%%%%%%%%%%%%%%%%%%%%%%%%%%%%%%%%%%%%%%%
\begin{rem}\label{remc1}
The assumption $\|x_0\|_{1,1}=1$ is actually not restrictive: indeed, given $\bar{x}_0\in \Xun^+$ such that $\bar{x}_{1,0}>0$, the initial condition $x_0=\bar{x}_0/\|\bar{x}_0\|_{1,1}$ fulfils \eqref{c3}. If $x$ denotes the corresponding solution to \eqref{c5a} with minimal size $\ell$ and $\bar{x}:=\|\bar{x}_0\|_{1,1} x$, it is straightforward to check that the pair $(\ell, \bar{x})$ satisfies all the requirements of Theorem~\ref{thc1} except \eqref{c7} which has to be replaced by $\|\bar{x}(t)\|_{1,1} = \|\bar{x}_0\|_{1,1}$ for $t\in [0,t_\infty)$.
\end{rem}
%%%%%%%%%%%%%%%%%%%%%%%%%%%%%%%%%%%%%%%%%%%%%%%%%%%%%%%%%%%%%%%%%%%%%%%%%%%%%%%%%%

We now turn to the connection between the deterministic and stochastic models and establish the following convergence result.

%%%%%%%%%%%%%%%%%%%%%%%%%%%%%%%%%%%%%%%%%%%%%%%%%%%%%%%%%%%%%%%%%%%%%%%%%%%%%%%%%%
\begin{thm} \label{thc3}
Let $K$ and $x_0$ be a coagulation kernel and a deterministic initial condition satisfying \eqref{c1} and \eqref{c3}, respectively. Consider a sequence  $(X_0^N)_{N\ge 1}$ of stochastic initial configurations in $\ell_\NN^1$ satisfying \eqref{asterix} which are close to $x_0$ in the following sense:
\begin{equation} \label{hyp1}
\P\left( \left\|\frac{1}{N}\ X_0^N -x_0\right\|_1> \frac{1}{N^{1/4}} \right) \le \frac{1}{N^{1/4}}\,.
\end{equation}
Assume further that, for any $i\ge 0$, there is $\kappa_i>0$ such that
\beqn
\label{cubitus}
K(i,j) \le \kappa_i \,, \qquad j\ge i\,, \;\;\;\mbox{ and }\;\;\; \kappa_\infty := \sup{\left\{ \frac{\kappa_i}{i} \right\}} < \infty\,.
\eeqn
Let $x$ be the corresponding solution to \eqref{c5a} with maximal existence time $t_\infty$ defined by \eqref{c4} and, for $N\ge 1$, $X^N$ the Markov process starting from $X_0^N$ defined in Section~\ref{secdescrip}. Then
for all $t\in (0,t_\infty)$ there exist constants $C(t), D(t)>0$ such that for $N$ large enough :
$$ \P\left(\sup_{0\leq s\leq t}    \left\| \frac{1}{N}\ X^N(s)-x(s)  \right\|_1\geq \frac{D(t)}{N^{1/4}} \right) \leq \frac{C(t)}{N^{1/4}}. $$
\end{thm}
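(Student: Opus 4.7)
The strategy is a finite induction over the strata $[t_{i-1},t_i)$ of the deterministic minimum-size function $\ell$. Fix $t\in(0,t_\infty)$ and let $I\ge 1$ be such that $t_{I-1}\le t<t_I$. The statement I propagate from $i-1$ to $i$ is that, on an event of probability at least $1-C_i/N^{1/4}$, one has both
$$
\sup_{0\le s\le t_i\wedge t}\bigl\|\tilde{X}^N(s)-x(s)\bigr\|_1\le \frac{D_i}{N^{1/4}}
$$
and the stochastic minimum $\ell_{\tilde{X}^N(s)}$ coincides with $\ell(s)$ outside a transition window of width $O(N^{-1/4})$ around $t_i$. Since only finitely many strata precede $t$, summing the estimates across $i\le I$ will give the theorem after adjusting constants.

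\textbf{Martingale control and Gronwall on a single stratum.} Taking $f=\mathrm{id}$ componentwise in \eqref{gen} produces the semimartingale decomposition
$$
\tilde{X}^N(s)=\tilde{X}^N(0)+\int_0^s\tilde{\beta}(\tilde{X}^N(u))\,du+M^N(s),
$$
where $M^N$ is a martingale whose componentwise quadratic variation is given by \eqref{eq2vq}. From \eqref{cubitus} together with symmetry of $K$ one has $K(i,j)\le\kappa_\infty\min(i,j)$, and combined with the mass conservation $\|\tilde{X}^N(s)\|_{1,1}=1$ this yields $\sum_j\tilde{\alpha}_j(\xi)\le C\kappa_\infty/N$. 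Applying Doob's $L^2$-inequality componentwise and truncating at a threshold $J=J(t)$ chosen via the tail bound $\sum_{j>J}x_j(s)\le 1/J$ (valid uniformly for $s\le t$), I obtain $\mathbb{P}(\sup_{s\le t}\|M^N(s)\|_1\ge N^{-1/4})\le C(t)/N^{1/2}$. On any subinterval of $(t_{i-1},t_i)$ where the stochastic and deterministic minima coincide, inspection of \eqref{eq2} and \eqref{spip} gives $\tilde{\beta}(\tilde{X}^N(s))=b^{(i)}(\tilde{X}^N(s))+O(1/N)$, and \eqref{cubitus} shows $b^{(i)}$ is Lipschitz on $\mathcal{X}_{1,i}$ with constant at most $2\kappa_i$; Gronwall's lemma applied to $e^N:=\tilde{X}^N-x$ then propagates the $\ell^1$-bound across the interior of the stratum, absorbing the martingale and $O(1/N)$ drift errors.

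\textbf{Aligning the transition times---the main obstacle.} The delicate point is to show that the stochastic hitting time $\sigma_i^N:=\inf\{s:\tilde{X}^N_i(s)=0\}$ is within $O(N^{-1/4})$ of $t_i$ with high probability. The Gronwall comparison of the previous step prevents $\tilde{X}^N_i$ from vanishing prematurely because $x_i(s)$ is bounded below on $[t_{i-1}+N^{-1/4},t_i-N^{-1/4}]$; at time $t_i-N^{-1/4}$ the same comparison gives $\tilde{X}^N_i\lesssim N^{-1/4}$, since $x_i$ reaches zero at $t_i$ with strictly negative derivative (the derivative $b^{(i)}_i(x(t_i))=-\sum_{j\ge i+1}K(i,j)x_j(t_i)$ is strictly negative by \eqref{c6} and $\delta_i>0$ in \eqref{c1}). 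Hence at most $N^{3/4}$ particles of size $i$ survive at that moment, each being absorbed at total rate $\sum_{j\ge i+1}K(i,j)X^N_j(s)\gtrsim N$, the strict positivity coming again from \eqref{c6} and $\delta_i>0$; a direct estimate on the sum of $N^{3/4}$ independent exponential variables of parameter of order $N$ then forces $\sigma^N_i\le t_i+O(N^{-1/4})$ with probability $1-O(N^{-1/2})$. During this transition window of length $O(N^{-1/4})$ the $\ell^1$-displacement of $\tilde{X}^N$ is itself $O(N^{-1/4})$, so the target bound is preserved when one restarts the Gronwall argument of Step~2 at the beginning of the next stratum. The exponent $1/4$ in the theorem is precisely the compromise dictated by this transition-window argument: the martingale fluctuations are of the finer order $N^{-1/2}$, but matching the discrete minimum-size transitions introduces the coarser scale $N^{-1/4}$, and the assumption $\kappa_\infty<\infty$ in \eqref{cubitus} is exactly what keeps the quadratic-variation sum finite uniformly in the position of the minimum.
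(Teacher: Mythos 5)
Your architecture is essentially the paper's: a stratum-by-stratum induction, the semimartingale decomposition with $\tilde\beta=b^{(i)}+O(1/N)$ while the stochastic minimal size equals $i$, Doob's $L^2$ inequality converted into an $\ell^1$ bound by truncating at level $J\sim N^{1/4}$ and controlling the tail by the conserved first moment (this is exactly the paper's $p_d/q_d$ device \eqref{z1}--\eqref{z2} with $d\in(N^{1/4},2N^{1/4})$), then Gronwall on each stratum and control of the random transition times. The one genuinely different ingredient is your treatment of $\sigma_i^N$: you count the $\lesssim N^{3/4}$ surviving particles of size $i$ at time $t_i-N^{-1/4}$ and exhaust them by summing exponential waiting times of rate $\gtrsim N$. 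This works, but it requires an additional lower bound on the total coalescence rate $\sum_j K(i,j)X_j^N(s)\gtrsim N$ up to the (extended) extinction time, which you must extract from $\delta_i>0$ in \eqref{c1} together with a lower bound on $\sum_j x_j(s)$ propagated through the comparison. The paper avoids any rate estimate: it extends the deterministic flow $x^{(i)}$ beyond $t_i$ to a time $\bar s_i$ where $x_i^{(i)}(t_{i-1}+\bar s_i)<0$ (Proposition~\ref{prop1}), so that $\tilde X_i^N\ge 0$ and $|\tilde X_i^N-x_i^{(i)}|\le CN^{-1/4}$ force extinction before $t_{i-1}+\bar s_i$, and the uniform slope $\eta_i$ then pins $|\sigma_i^N-s_i|\le C/(\eta_i N^{1/4})$; this pure comparison argument is somewhat cleaner than the exponential-clock count.

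Two points need repair, though neither is fatal. First, your claim $\P(\sup_{s\le t}\|M^N(s)\|_1\ge N^{-1/4})\le C(t)/N^{1/2}$ is not what the truncation argument yields: making the tail $\|q_J(\cdot)\|_1\le\|\cdot\|_{1,1}/J$ of order $N^{-1/4}$ forces $J\gtrsim N^{1/4}$, and then $\|p_J(M)\|_1\le\sqrt{J}\,\|M\|_2$ with $\E\sup\|M\|_2^2\le C/N$ gives, via Markov, a probability of order $N^{-1/4}$, not $N^{-1/2}$. This is harmless for the theorem, but note that the exponent $1/4$ is generated precisely by optimizing this $\ell^1$-versus-$\ell^2$ trade-off (balance $\sqrt{J}\varepsilon$, $1/J$ and $C/(N\varepsilon^2)$), and not, as you assert, by the transition-window matching. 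Second, the Gronwall comparison must be run between the time-shifted processes $\tilde X^N(T_{i-1}^N+\cdot)$ and $x^{(i)}(t_{i-1}+\cdot)$, since the drift identity holds only while the stochastic minimum equals $i$; converting this into a bound on $\|\tilde X^N(s)-x(s)\|_1$ at equal times costs an extra term $|T_{i-1}^N-t_{i-1}|\cdot\sup\|dx/dt\|_1$, controlled via \eqref{senechal} and Corollary~\ref{cor1}. You allude to this ("restarting the Gronwall argument"), but it should be stated explicitly, as it is in the paper's Proposition~\ref{prop synthese}.
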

%%%%%%%%%%%%%%%%%%%%%%%%%%%%%%%%%%%%%%%%%%%%%%%%%%%%%%%%%%%%%%%%%%%%%%%%%%%%%%%%%%

We next turn to the life span of the deterministic and stochastic min-driven coagulation models and investigate the possible values of $t_\infty$ as well as the behaviour of the time $T^{X_0}$ after which the stochastic min-driven coagulation process $X$ starting from $X_0\in\ell_\NN^1$ ($\ell_\NN^1$ being defined in \eqref{idefix}) no longer evolves, that is,
\begin{equation}
\label{te}
T^{X_0}:=\inf{\left\{ t\ge 0 \ : \ \| X(t)\|_1=1 \right\}}\,.
\end{equation}
We first establish that, according to the growth of the coagulation kernel $K$, $t_\infty$ is finite or infinite. Note that, in the former case, this means that the minimal size $\ell$ blows up in finite time.

%%%%%%%%%%%%%%%%%%%%%%%%%%%%%%%%%%%%%%%%%%%%%%%%%%%%%%%%%%%%%%%%%%%%%%%%%%%%%%%%%%
\begin{thm}\label{thc2} Consider an initial condition $x_0$ satisfying \eqref{c3} and let $x$ be the corresponding solution to the min-driven coagulation equations given in Theorem~\ref{thc1} defined on $[0,t_\infty)$, $t_\infty$ being defined in \eqref{c4}.\\
\noindent (i) If $K(i,j)\le \left( \ln{(i+1)} \wedge \ln{(j+1)} \right)/(4A_0)$ for $i,j\ge 1$ and some $A_0>0$ then $t_\infty=\infty$.\\
\noindent (ii) If $K(i,j)\ge a_0\ \left( \ln(i+1) \wedge \ln(j+1) \right)^{1+\alpha}$ for $i,j\ge 1$ and some $a_0>0$ and $\alpha>0$, then $t_\infty<\infty$.
\end{thm}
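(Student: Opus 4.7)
The plan is to analyze the second moment $M_2(t) := \sum_{j\ge 1} j^2\, x_j(t)$ along the intervals $[t_{i-1}, t_i)$ on which the minimal size $\ell(t) = i$ is constant. Using \eqref{spip} together with the mass conservation $\sum_j j\, x_j(t) \equiv 1$, a direct computation yields on each such interval
\begin{equation*}
\frac{d M_2}{dt}(t) \;=\; 2 i \left[\, i\, K(i,i)\, x_i(t) + \sum_{k \ge i+1} k\, K(i,k)\, x_k(t)\,\right],
\end{equation*}
an identity which serves as the common backbone for both parts.

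For part~(i), the assumption $K(i,k) \le (\ln(i+1) \wedge \ln(k+1))/(4A_0)$ combined with $\sum_{k \ge i} k\, x_k = 1$ gives $\dot M_2(t) \le \ell(t)\ln(\ell(t)+1)/(2A_0)$. The elementary bound $M_2(t) \ge \ell(t)\|x(t)\|_{1,1} = \ell(t)$ together with the monotonicity of $r \mapsto r \ln(r+1)$ upgrades this to the autonomous nonlinear differential inequality
\begin{equation*}
\dot M_2 \;\le\; \frac{M_2 \ln(M_2+1)}{2 A_0}.
\end{equation*}
The maximal solution of the associated ODE $\dot v = v \ln(v+1)/(2 A_0)$ is finite for every $t \ge 0$ since $\int^{\infty} du/(u \ln(u+1)) = +\infty$. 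A comparison argument then yields $\ell(t) \le M_2(t) < +\infty$ for every $t < +\infty$, so $t_\infty = +\infty$. The potential issue $M_2(0) = +\infty$, which is not excluded by \eqref{c3}, can be handled by truncating $x_0$ at size $n$ to obtain initial data $x_0^{(n)}$ with finite second moment, applying the preceding argument for each $n$, and passing to the limit using the continuous dependence on initial data provided by the uniqueness in Theorem~\ref{thc1}.

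Part~(ii) is considerably more delicate: the analogous lower bound $\dot M_2 \ge 2 a_0\, \ell\, \ln(\ell+1)^{1+\alpha}$ does \emph{not} suffice to force $t_\infty < +\infty$, because $M_2$ can blow up in finite time merely through the creation of very large particles without the minimal size $\ell$ actually diverging. I therefore estimate $t_i - t_{i-1}$ directly. Introducing $y(t) := \sum_{k \ge i+1} x_k(t)$ on $[t_{i-1}, t_i)$, one computes $\dot y(t) = K(i,i)\, x_i(t) \ge 0$, while the lower bound $K(i,k) \ge a_0 \ln(i+1)^{1+\alpha}$ for $k \ge i$ yields $\dot x_i \le -a_0 \ln(i+1)^{1+\alpha} (x_i + y)$. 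Since $y$ is non-decreasing on the interval, integrating this inequality from $t_{i-1}$ to $t_i$ gives
\begin{equation*}
t_i - t_{i-1} \;\le\; \frac{x_i(t_{i-1})}{a_0 \ln(i+1)^{1+\alpha}\, y(t_{i-1})}.
\end{equation*}
The crucial step is then to establish a bound of the form $x_i(t_{i-1})/y(t_{i-1}) \le C/i$ for some constant $C$ independent of $i$. This is suggested by the exactly solvable benchmark $K(i,j) = \phi(i) \wedge \phi(j)$ with monodisperse initial data, where one checks by induction that $x_i(t_{i-1})/(x_i(t_{i-1})+y(t_{i-1})) = 1/i$ for every $i \ge 1$. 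In the general setting the same bound is expected to follow by propagating inductively the analogous ratio through successive levels, exploiting that $\dot x_j = -K(\ell, j)\, x_j$ whenever $\ell > j/2$, which forces an exponential decay of $x_j$ over the second half of its pre-merger lifetime; this inductive estimate is the main obstacle I anticipate. Granting it, $t_i - t_{i-1} \le C'/(i \ln(i+1)^{1+\alpha})$, and the convergence of $\sum_{i \ge 2} 1/(i (\ln i)^{1+\alpha})$ for $\alpha > 0$ gives $t_\infty < +\infty$.
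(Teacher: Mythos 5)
Your part~(i) has a genuine gap at its very first step: hypothesis \eqref{c3} only guarantees a finite \emph{first} moment, so $M_2(0)=\sum j^2 x_{j,0}$ may be infinite (take $x_{j,0}\sim c\,j^{-3}$), and then the differential inequality $dM_2/dt\le M_2\ln(M_2+1)/(2A_0)$ carries no information. Your proposed repair by truncation does not close the gap: for the truncated data the comparison solution starts from $M_2^{(n)}(0)\to\infty$, so the resulting bound on $\ell^{(n)}(t)$ blows up with $n$ and passing to the limit yields nothing about $\ell(t)$. (The idea could in principle be salvaged by replacing $j^2$ with $j\,\zeta(j)$ for a de la Vall\'ee-Poussin weight $\zeta$ adapted to $x_0$, since $\zeta(\ell)\le\sum j\,\zeta(j)\,x_j$ still controls $\ell$, but you would then have to redo the growth estimate for that weight and you have not.) The paper avoids the issue entirely by working with the zeroth moment $M_0=\sum x_j$, which is automatically finite ($M_0(0)\le\|x_0\|_{1,1}=1$): the bound $-dM_0/dt\le\phi_i M_0$ says $M_0$ cannot decrease too fast, while $M_0(t_i)\le 1/i$ says it must have decreased a lot by time $t_i$, and combining the two forces $t_i\to\infty$.

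For part~(ii) the gap is the key estimate itself. Your reduction $t_i-t_{i-1}\le x_i(t_{i-1})/\bigl(a_0\,\ln(i+1)^{1+\alpha}\,y(t_{i-1})\bigr)$ is correct, but everything then hinges on $x_i(t_{i-1})/y(t_{i-1})\le C/i$, which you explicitly leave unproved; the benchmark computation for $K=\phi(i)\wedge\phi(j)$ with monodisperse data and the sketched induction ``through successive levels'' do not constitute an argument, and it is far from clear that such a bound holds for a general kernel satisfying only the stated lower bound and a general $x_0$. The paper takes a much softer route (Proposition~\ref{pre2}): with $\phi_j=(\ln(j+1))^{1+\alpha}$ and the \emph{non-increasing} weight $\psi_j=(\ln(j+1))^{-\alpha}$, one checks that $M_\psi/M_0$, with $M_\psi=\sum\psi_j x_j$, satisfies $\frac{d}{dt}(M_\psi/M_0)\le-\e$ on every interval $(t_{i-1},t_i)$ with $\e>0$ independent of $i$; since $M_\psi/M_0\ge0$ and $M_\psi(0)/M_0(0)\le\psi_1<\infty$, this gives $t_\infty\le\psi_1/\e$ directly, with no information needed on the profile of $x$ at the switching times. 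You should either prove your ratio estimate or adopt a Lyapunov functional of this type.
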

%%%%%%%%%%%%%%%%%%%%%%%%%%%%%%%%%%%%%%%%%%%%%%%%%%%%%%%%%%%%%%%%%%%%%%%%%%%%%%%%%%

A more precise result is available for the stochastic min-driven coagulation process under a stronger structural assumption on the coagulation kernel.

%%%%%%%%%%%%%%%%%%%%%%%%%%%%%%%%%%%%%%%%%%%%%%%%%%%%%%%%%%%%%%%%%%%%%%%%%%%%%%%%%%
\begin{thm}\label{thc4} Assume that the coagulation kernel $K$ is of the form
\begin{equation}
\label{Kmin}
K(i,j)=\phi(i)\wedge\phi(j) \;\;\;\mbox{ where }\;\;\; \phi \;\;\;\mbox{ is a positive increasing function.}
\end{equation}
Then
$$
\sup_{X_0\in\ell_\NN^1} \E(T^{X_0}) < \infty \;\;\;\mbox{  if and only if }\;\;\; \sum_{i=1}^\infty\frac{1}{i\phi(i)}<\infty\,,
$$
the space $\ell_\NN^1$ being defined in \eqref{idefix}.
\end{thm}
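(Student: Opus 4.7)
Write $M := \sum_i i X_{i,0}$ for the conserved total mass and $N(X) := \sum_i X_i$ for the particle count, noting that the process is absorbed precisely at $N(X) = 1$. I would prove the two directions of the biconditional separately.

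\emph{Necessity} ($\sum_i 1/(i\phi(i)) = \infty \Longrightarrow \sup_{X_0} \E T^{X_0} = \infty$). Since $\ell_X \leq M/N(X)$ (the minimum is bounded by the average size) and $\phi$ is non-decreasing, the total jump rate at state $X$ is bounded above by $\phi(\lfloor M/N(X)\rfloor)(N(X)-1)$. A standard coupling then shows $T^{X_0}$ stochastically dominates the absorption time of a pure-death chain $(\tilde Z_t)$ on $\{1,\ldots,N_0\}$ with transitions $\tilde Z \mapsto \tilde Z-1$ at rate $\phi(\lfloor M/\tilde Z\rfloor)(\tilde Z-1)$, whose expectation equals $\sum_{k=2}^{N_0} 1/[\phi(\lfloor M/k\rfloor)(k-1)]$. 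Grouping the sum by $i = \lfloor M/k\rfloor$ (each block has length of order $M/i^2$ and contributes $\sim 1/(i\phi(i))$) yields the lower bound $c\sum_{i=1}^{\lfloor M/2\rfloor} 1/(i\phi(i))$. Taking $X_0 = N\ee_1$ (so $M = N_0 = N$) and sending $N\to\infty$ forces $\E T^{X_0}\to\infty$ whenever the series diverges.

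\emph{Sufficiency} ($\sum_i 1/(i\phi(i)) < \infty \Longrightarrow \sup_{X_0} \E T^{X_0} < \infty$). I decompose $T^{X_0} = \sum_i T_i$, where $T_i$ is the total duration during which the minimum equals $i$ (zero if $i$ is skipped). Conditioning on the entry time $\sigma_i$ of phase $i$ and using that each of the $r_i := N(\sigma_i) - N(\sigma_{i+1})$ mergers in that phase fires at rate $\phi(i)(N-1)$ gives the exact identity
$$ \E\bigl[T_i \,\big|\, \mathcal{F}_{\sigma_i}\bigr] = \phi(i)^{-1}\bigl(H_{N(\sigma_i)-1} - H_{N(\sigma_{i+1})-1}\bigr) \leq \phi(i)^{-1}\log\bigl(N(\sigma_i)/N(\sigma_{i+1})\bigr). $$
If one has the deterministic heuristic $N(\sigma_i) \asymp M/i$ (which is saturated at phase~$1$ for $X_0 = N\ee_1$, and matches the necessity bound above), then $\log(N(\sigma_i)/N(\sigma_{i+1})) \asymp \log((i+1)/i) \asymp 1/i$, each phase contributes $\sim 1/(i\phi(i))$, and summation yields $\E T^{X_0} \leq C\sum_i 1/(i\phi(i))$ uniformly in $X_0$.

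\textbf{The main obstacle} is to justify this heuristic rigorously: one must prevent the ratio $N(\sigma_i)/N(\sigma_{i+1})$ from becoming too large, equivalently show that at the start of each phase the size distribution does not disperse too widely above the minimum. Here the structural assumption $K(i,j) = \phi(\min(i,j))$ is indispensable, since it makes the merger partner of the designated minimum uniform among all remaining particles; this should enable a concentration argument, for instance via the deterministic ODE $dq/dN = (q+N-2)/(N-1)$ (where $q$ is the count of minimum-sized particles and $N$ the total count), to which the stochastic trajectory of $(X_{\ell_X}(t), N(X(t)))$ is close. Plugging this input into an Abel summation with the pointwise mass bound $N(\sigma_i) \leq M/i$ would deliver $\sup_{X_0} \E T^{X_0} \leq C\sum_i 1/(i\phi(i))$, completing the proof.
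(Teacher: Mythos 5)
Your necessity direction is sound and is essentially the paper's argument: the bound $\ell_X\le M/N(X)$ (minimum below the mean) combined with the exact total rate $\phi(\ell_X)(N(X)-1)$ gives exactly the paper's lower bound $\E(T^{n\ee_1})\ge\sum_{m=1}^{n-1}1/(m\,\phi(n/m))$, and your block-grouping is the same comparison with $\int_1^\infty dx/(x\phi(x))$.

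The sufficiency direction, however, has a genuine gap, and you have correctly identified where it is but not closed it. The identity $\E T_i=\phi(i)^{-1}\,\E\bigl(H_{N(\sigma_i)-1}-H_{N(\sigma_{i+1})-1}\bigr)$ is fine, but the heuristic $\log\bigl(N(\sigma_i)/N(\sigma_{i+1})\bigr)\asymp 1/i$ that you need to convert it into $\sum_i 1/(i\phi(i))$ is simply false in general: for $X_0=n\ee_i$ the phases $i+1,\dots,2i-1$ are empty and $N(\sigma_i)/N(\sigma_{2i})\ge 2$, so single phases can contribute $\phi(i)^{-1}\log 2$ rather than $\phi(i)^{-1}/i$. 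Worse, without a \emph{lower} bound on $N(\sigma_{i+1})$ the telescoping only yields $\E T^{X_0}\le\phi(1)^{-1}\log\|X_0\|_1$ (Abel summation with $\phi$ increasing), which is not uniform in $X_0$; and a pointwise lower bound $N(\sigma_{i+1})\gtrsim M/i$ does not hold surely, only with high probability, so a genuine concentration estimate uniform in $i$ and in $X_0$ would be required. The paper avoids this entirely by a different route: a monotone coupling (its Lemma~5.1) shows that among all configurations with $n$ particles the monodisperse data $n\ee_1$ maximizes $T^{X_0}$ and each $T_i^{X_0}$; a scaling identity gives $T_i^{n\ee_i}\overset{\text{law}}{=}(\phi(1)/\phi(i))\,T_1^{n\ee_1}$; the hydrodynamic limit (Corollary~3.6) yields $\sup_n\E(T_1^{n\ee_1})\le C$; and a dyadic induction using only the sure bound $n_*\le n/2$ (mass conservation: after phase $i$ all sizes are $\ge 2i$) gives $\E(T^{n\ee_1})\le C\phi(1)\sum_k 1/\phi(2^k)$, which converges exactly when $\sum_i 1/(i\phi(i))$ does. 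If you want to salvage your phase decomposition, you should similarly work on the dyadic scale and reduce to monodisperse data first; as written, the concentration input your argument requires is both unproved and, in the pointwise form you state it, untrue.
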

%%%%%%%%%%%%%%%%%%%%%%%%%%%%%%%%%%%%%%%%%%%%%%%%%%%%%%%%%%%%%%%%%%%%%%%%%%%%%%%%%%

The above two results provide conditions on the coagulation kernel $K$ which guarantee that, in a finite time, some mass escapes to infinity, 
or forms a giant particle, of the order of the system. This is the behaviour known as {\em gelation} for the Smoluchowski coagulation equation and the  
Marcus-Lushnikov process, and is known to occur when the coagulation kernel $K$ satisfies $K(i,j)\ge c (ij)^{\lambda/2}$ for some 
$\lambda\in (1,2]$ \cite{EMP02,Je98}. We observe that the growth required on the coagulation kernel is much weaker for the min-driven coagulation models.
In fact the behaviour we have shown is more extreme than gelation, in that all the mass goes to infinity or joins the giant particle. A similar
phenomenon has been called as {\em complete gelation} in the context of the Marcus-Lushnikov process, and is known to occur instantaneously,
as $N\to\infty$, whenever $K(i,j)\ge ij(\log(i+1)\log(j+1))^\alpha$ and $\alpha>1$ \cite{MR1722986}.

%%%%%%%%%%%%%%%%%%%%%%%%%%%%%%%%%%%%%%%%%%%%%%%%%%%%%%%%%%%%%%%%%%%%%%%%%%%%%%%%%%
%%%%%%%%%%%%%%%%%%%%%%%%%%%%%%%%%%%%%%%%%%%%%%%%%%%%%%%%%%%%%%%%%%%%%%%%%%%%%%%%%%
\section{The deterministic min-driven coagulation equation}\label{tdmde}
%%%%%%%%%%%%%%%%%%%%%%%%%%%%%%%%%%%%%%%%%%%%%%%%%%%%%%%%%%%%%%%%%%%%%%%%%%%%%%%%%%
%%%%%%%%%%%%%%%%%%%%%%%%%%%%%%%%%%%%%%%%%%%%%%%%%%%%%%%%%%%%%%%%%%%%%%%%%%%%%%%%%%
\setcounter{thm}{0}
\setcounter{equation}{0}
In this section, we investigate the well-posedness of the min-driven coagulation equation \eqref{c5a}. It is clearly an infinite system of ordinary differential equations which is linear on the time intervals where the minimal size $\ell$ is constant. We will thus first study the well-posedness for this reduced system, assuming the coefficients to be bounded in a first step to be able to apply the Cauchy-Lipschitz theorem and relaxing this assumption afterwards by a compactness method. We also pay attention to the first vanishing time of the first component which was initially positive. The proof of Theorem~\ref{thc1} is then performed by an induction argument.

%%%%%%%%%%%%%%%%%%%%%%%%%%%%%%%%%%%%%%%%%%%%%%%%%%%%%%%%%%%%%%%%%%%%%%%%%%%%%%%%%%
%%%%%%%%%%%%%%%%%%%%%%%%%%%%%%%%%%%%%%%%%%%%%%%%%%%%%%%%%%%%%%%%%%%%%%%%%%%%%%%%%%
\subsection{An auxiliary infinite system of differential equations}\label{aaisode}
%%%%%%%%%%%%%%%%%%%%%%%%%%%%%%%%%%%%%%%%%%%%%%%%%%%%%%%%%%%%%%%%%%%%%%%%%%%%%%%%%%
%%%%%%%%%%%%%%%%%%%%%%%%%%%%%%%%%%%%%%%%%%%%%%%%%%%%%%%%%%%%%%%%%%%%%%%%%%%%%%%%%%

Consider $i\ge 1$ and a sequence $(a_j)_{j\ge 1}$ of real numbers satisfying
\beqn
\label{b1}
0 < a_j \le A\ j\,, \quad j\ge 1\,,
\eeqn
for some $A>0$. We define the function $F=(F_j)_{j\ge 1}$ on $\Xun$ by
\beqn
\label{b2}
\left\{
\begin{array}{lcl}
F_j(y) := 0 & \mbox{ if } & 1\le j \le i-1\,,\\
F_i(y) := \displaystyle{-a_i\ y_i - \sum_{j=i}^\infty a_j\ y_j}\,, & & \\
F_j(y):= a_{j-i}\ y_{j-i} - a_j\ y_j & \mbox{ if } & j\ge i+1
\end{array}
\right.
\eeqn
for $y\in \Xun$. Note that \eqref{b1} ensures that $F(y)\in\ell^1$ for $y\in\Xun$ and that $F(y)\in\Xuni$.

%%%%%%%%%%%%%%%%%%%%%%%%%%%%%%%%%%%%%%%%%%%%%%%%%%%%%%%%%%%%%%%%%%%%%%%%%%%%%%%%%%
\begin{prop}\label{prb1}
Consider a sequence $(a_j)_{j\ge 1}$ satisfying \eqref{b1} and an initial condition $y_0=(y_{j,0})_{j\ge 1}\in \Xuni$. There is a unique solution $y\in\mathcal{C}([0,\infty);\Xuni)$ to the Cauchy problem
\beqn
\label{b3}
\frac{dy}{dt} = F(y)\,, \quad y(0)=y_0\,.
\eeqn
Moreover, for each $t>0$, $y$ and $dy/dt$ belong to $L^\infty(0,t;\Xuni)$ and $L^\infty(0,t;\ell^1)$, respectively, and
\beqn
\label{evian}
\sum_{j=i}^\infty j\ y_j(t) = \sum_{j=i}^\infty j\ y_{j,0}\,.
\eeqn
\end{prop}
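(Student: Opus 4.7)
My approach is to combine a double truncation-and-compactness argument for existence with a Lipschitz-type Gronwall estimate on differences, which simultaneously yields uniqueness, continuous dependence on initial data, and the mass conservation \eqref{evian}. Linearity of $F$ is essential. The main subtlety is that $F$ maps $\Xuni$ only into $\ell^1$ (not into itself, because $j\,a_j\le Aj^2$), so Cauchy-Lipschitz cannot be applied directly in $\Xuni$.

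For existence, by linearity it suffices to treat $y_0\in\Xuni^+$. For $M\ge i$, I approximate by the compactly supported $y_0^M:=y_0\,\mathbf{1}_{j\le M}$, and for each $n\ge M$ truncate the coefficients via $a_j^{(n)}:=a_j\,\mathbf{1}_{j\le n}$. The modified operator $F^{(n)}$ is bounded on $\Xuni$ and preserves the positive cone, so Cauchy-Lipschitz produces a unique global non-negative solution $y^{M,n}$ starting from $y_0^M$. A summation-by-parts telescoping yields the exact first-moment conservation $\|y^{M,n}(t)\|_{1,1}=\|y_0^M\|_{1,1}$, and the analogous second-moment computation gives
\[
\frac{d}{dt}\sum_{j\ge i} j^2\,y_j^{M,n}(t) \;=\; 2i\sum_{k\ge i} k\,a_k^{(n)}\,y_k^{M,n}(t) \;\le\; 2iA\sum_{j\ge i} j^2\,y_j^{M,n}(t),
\]
whence Gronwall delivers a second-moment bound uniform in $n$. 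Markov's inequality then makes the tails of $y^{M,n}$ uniformly small in $j$, and a standard diagonal extraction based on Arzel\`a-Ascoli together with Vitali convergence produces a solution $y^M\in\mathcal{C}([0,\infty);\Xuni^+)$ of \eqref{b3} with $y^M(0)=y_0^M$ and $\|y^M(t)\|_{1,1}=\|y_0^M\|_{1,1}$.

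The heart of the argument is a Gronwall estimate on differences. For any two $\Xuni$-valued solutions $y^{(1)}, y^{(2)}$ of \eqref{b3}, set $z:=y^{(1)}-y^{(2)}$ and $\sigma_j:=\mathrm{sgn}(z_j)$ (with the standard regularization $|x|\leadsto\sqrt{x^2+\varepsilon}$ handling the non-smoothness of $|\cdot|$); redoing the truncated first-moment calculation with these signs inserted yields, for $N\ge 2i$,
\[
\frac{d}{dt}\sum_{j=i}^N j\,|z_j(t)| \;\le\; 2iA\,\|z(t)\|_{1,1} \;-\; \sum_{j=N-i+1}^N j\,a_j\,|z_j(t)|.
\]
Dropping the non-positive loss term, passing to $N\to\infty$ by monotone convergence, and applying Gronwall's lemma gives $\|z(t)\|_{1,1}\le\|z(0)\|_{1,1}\,e^{2iAt}$. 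This immediately yields uniqueness, and applied to the sequence $(y^M)_M$ it shows $(y^M)$ is Cauchy in $\mathcal{C}([0,T];\Xuni)$; its limit $y$ is the required solution, satisfying $y(0)=y_0$ and $\|y(t)\|_{1,1}=\lim_M\|y_0^M\|_{1,1}=\|y_0\|_{1,1}$, which is \eqref{evian}. The $L^\infty$ bounds for $y$ in $\Xuni$ and for $dy/dt$ in $\ell^1$ follow at once from \eqref{evian} together with $\|F(y)\|_1\le 4A\|y\|_{1,1}$. The principal technical obstacle---justifying the passage to the limit in the infinite sum defining $F_i$---is overcome by the uniform-in-$n$ second-moment control established for the compactly-supported approximations.
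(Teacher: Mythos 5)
Your overall architecture (truncate to bounded coefficients, solve by Cauchy--Lipschitz, obtain moment bounds, extract a limit with tail control, and prove uniqueness via a Gronwall estimate on $\|\cdot\|_{1,1}$ of differences) matches the paper's proof, and your double truncation --- compactly supported data plus cut-off coefficients, so that \emph{second} moments provide the tail control --- is a legitimate substitute for the paper's de la Vall\'ee-Poussin weight $\zeta$. However, there is a genuine error at the heart of your positivity claims: the flow of \eqref{b3} does \emph{not} preserve the positive cone. For $y\in\Xuni^+$ one has $dy_i/dt=-a_iy_i-\sum_{j\ge i}a_jy_j$, so when $y_i$ reaches $0$ its derivative equals $-\sum_{j>i}a_jy_j<0$ and the $i$-th component crosses into negative values; this is exactly the mechanism exploited in Proposition~\ref{prb3} (the times $t_*<\infty$ and \eqref{b23b}), and it is what drives the whole min-driven dynamics. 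Consequently your assertions that $y^{M,n}$ is ``non-negative'' and that $\|y^{M,n}(t)\|_{1,1}=\|y_0^M\|_{1,1}$ are false beyond the first vanishing time of the $i$-th component, and your final step --- identifying $\|y(t)\|_{1,1}=\|y_0\|_{1,1}$ with \eqref{evian} --- does not prove \eqref{evian}, which is the \emph{signed} identity $\sum j\,y_j(t)=\sum j\,y_{j,0}$ and does not coincide with conservation of $\|\cdot\|_{1,1}$ once a component changes sign.

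The damage is localized and repairable. Wherever you invoke exact norm conservation you should instead use the one-sided estimate $\frac{d}{dt}\sum_{j}j|y_j^{M,n}|\le 2iA\,\|y^{M,n}\|_{1,1}$ (insert signs $\sigma_j$ as in the paper's derivation of \eqref{b10}), giving $\|y^{M,n}(t)\|_{1,1}\le\|y_0^M\|_{1,1}e^{2iAt}$, which is all that Arzel\`a--Ascoli and the equicontinuity bound require. Likewise your second-moment identity must be run on $\sum_j j^2|y_j^{M,n}|$ rather than on the signed sum; the convexity inequality $(i+j)^2+i^2-j^2\le 4ij$ for $j\ge i$ still yields a Gronwall bound uniform in $n$, so the tail control and the passage to the limit in the infinite series defining $F_i$ survive. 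Finally, \eqref{evian} should be obtained by writing the signed identity $\sum_j j\,y_j^{M,n}(t)=\sum_j j\,y_{j,0}^M$ (the case $g_j=j$ of \eqref{b25}, valid without any sign assumption) and passing to the limit in $n$ and then in $M$ using the tail control and your $\mathcal{C}([0,T];\Xuni)$-convergence of $y^M$. With these corrections your argument is complete and essentially parallel to the paper's, the only substantive difference being second moments of truncated data in place of the superlinear weight furnished by the de la Vall\'ee-Poussin theorem.
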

%%%%%%%%%%%%%%%%%%%%%%%%%%%%%%%%%%%%%%%%%%%%%%%%%%%%%%%%%%%%%%%%%%%%%%%%%%%%%%%%%%

We first consider the case of a bounded sequence $(a_j)_{j\ge 1}$.

%%%%%%%%%%%%%%%%%%%%%%%%%%%%%%%%%%%%%%%%%%%%%%%%%%%%%%%%%%%%%%%%%%%%%%%%%%%%%%%%%%
\begin{lem}\label{leb2}
Consider a sequence $(a_j)_{j\ge 1}$ satisfying
\beqn
\label{b5}
0 < a_j \le A_0\,, \quad j\ge 1\,,
\eeqn
for some $A_0>0$ and an initial condition $y_0=(y_{j,0})_{j\ge 1}\in \Xuni$. Then there is a unique solution $y\in\mathcal{C}([0,\infty);\Xuni)$ to the Cauchy problem \eqref{b3} and
\beqn
\label{vichy}
\sum_{j=i}^\infty j\ y_j(t) = \sum_{j=i}^\infty j\ y_{j,0}\,, \quad t\ge 0\,.
\eeqn
\end{lem}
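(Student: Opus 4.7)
The plan is to observe that, under the uniform bound \eqref{b5}, the map $F$ defined by \eqref{b2} is a bounded linear operator on the Banach space $\Xuni$. Existence and uniqueness of $y \in \mathcal{C}([0,\infty);\Xuni)$ then follow from the Cauchy-Lipschitz theorem in a Banach space, and the conservation identity \eqref{vichy} reduces to the algebraic fact that the continuous linear form $y \mapsto \sum_{j \ge i} j\, y_j$ is annihilated by $F$ on $\Xuni$.

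First I would check that $\Xuni$ is invariant under $F$ and that $F \in \mathcal{L}(\Xuni)$. Invariance is immediate from the definition since $F_j(y) = 0$ for $1 \le j \le i-1$. For the operator norm I would use \eqref{b5} to write $i|F_i(y)| \le A_0 \bigl( i|y_i| + i\|y\|_1 \bigr)$ and, for $j \ge i+1$, $j|F_j(y)| \le A_0 \bigl( j|y_{j-i}| + j|y_j| \bigr)$. Crucially, for $y \in \Xuni$ one has $i \|y\|_1 \le \|y\|_{1,1}$ (every non-vanishing index is at least $i$), and the reindexing $k = j-i$ together with the vanishing of $y_k$ for $k < i$ gives $\sum_{j \ge i+1} j|y_{j-i}| = \sum_{k \ge i} (k+i)|y_k| \le 2\|y\|_{1,1}$. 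Summing these contributions yields a bound $\|F(y)\|_{1,1} \le C(A_0)\, \|y\|_{1,1}$, and the standard Banach-space ODE theory then produces a unique $\mathcal{C}^1$ solution $y$ on $[0,\infty)$ satisfying $y' = F(y)$ in $\Xuni$ (equivalently, $y(t) = e^{tF} y_0$).

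Next I would verify the identity $\sum_{j \ge i} j\, F_j(y) = 0$ for every $y \in \Xuni$. All rearrangements are justified by the absolute bound $\sum_{j \ge i} j\, a_j |y_j| \le A_0 \|y\|_{1,1} < \infty$. Splitting off $i F_i(y) = -i a_i y_i - i \sum_{k \ge i} a_k y_k$ and applying the substitution $k = j-i$ in the gain term gives
\[
\sum_{j \ge i+1} j\, a_{j-i}\, y_{j-i} \;=\; \sum_{k \ge i} (k+i)\, a_k\, y_k \;=\; \sum_{k \ge i} k\, a_k\, y_k \,+\, i \sum_{k \ge i} a_k\, y_k,
\]
after which the two $i \sum_{k \ge i} a_k y_k$ contributions cancel and the remainder telescopes to $-i a_i y_i + i a_i y_i = 0$. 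Since the linear functional $y \mapsto \sum_{j \ge i} j\, y_j$ is continuous on $\Xuni$ and $y$ is $\mathcal{C}^1$ with values in $\Xuni$, differentiating along the trajectory yields $\frac{d}{dt} \sum_{j \ge i} j\, y_j(t) = 0$, which is \eqref{vichy}.

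The only mildly delicate point is the boundedness of $F$ on $\Xuni$: the term $-\sum_{k \ge i} a_k y_k$ in $F_i$ is a priori only an $\ell^1$-quantity, and it is precisely the support condition encoded in $\Xuni$ that allows one to control $\|y\|_1$ by $\|y\|_{1,1}/i$ and absorb it into a first-moment estimate. Once this boundedness is established, the rest of the argument is classical linear ODE theory in a Banach space.
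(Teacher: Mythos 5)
Your proposal is correct and follows essentially the same route as the paper: boundedness (equivalently, Lipschitz continuity) of the linear map $F$ on $\mathcal{X}_{1,i}$, the Cauchy--Lipschitz theorem in that Banach space, and the telescoping identity $\sum_{j\ge i} j\,F_j(y)=0$ for conservation of the first moment. The only cosmetic difference is that the paper records the more general weighted identity \eqref{b25} for arbitrary sublinear weights $(g_j)$ (reused later in the paper) and specializes to $g_j=j$, whereas you verify the $g_j=j$ case directly; your explicit justification of $\|F(y)\|_{1,1}\le C(A_0)\|y\|_{1,1}$ via $i\|y\|_1\le\|y\|_{1,1}$ is a correct filling-in of the step the paper states as \eqref{b7} without detail.
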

%%%%%%%%%%%%%%%%%%%%%%%%%%%%%%%%%%%%%%%%%%%%%%%%%%%%%%%%%%%%%%%%%%%%%%%%%%%%%%%%%%

\noindent\textbf{Proof.}  It readily follows from \eqref{b2} and \eqref{b5} that, given $y\in\Xuni$ and $\hat{y}\in \Xuni$, we have
\beqn
\label{b7}
\| F(y)-F(\hat{y})\|_{1,1}\le 4A_0\ \| y-\hat{y}\|_{1,1}\,,
\eeqn
while the first $i-1$ components of $F(y)$ vanish. Therefore, $F$ is a Lipschitz continuous map from $\Xuni$ to $\Xuni$ and the Cauchy-Lipschitz theorem guarantees the existence and uniqueness of a solution $y\in\mathcal{C}([0,\infty);\Xuni)$ to \eqref{b3}. 

Next, let $(g_j)_{j\ge 1}$ is a sequence of real numbers satisfying $0\le g_j\le G\ j$ for $j\ge 1$ and some $G>0$. We deduce from \eqref{b3}, \eqref{b5}, and the summability properties of $y$ that
\beqn
\label{b25}
\frac{d}{dt} \sum_{j=i}^\infty g_j\ y_j(t) = \sum_{j=i}^\infty \left( g_{i+j}-g_i-g_j \right)\ a_j\ y_j(t)\,, \quad t\ge 0\,.
\eeqn
In particular, the choice $g_j=j$, $j\ge 1$, gives \eqref{vichy}. \qed

\medskip

\noindent\textbf{Proof of Proposition~\ref{prb1}.}  For $m\ge 1$ and $j\ge 1$, we put $a_j^m:= a_j\wedge m$. Since the sequence $(a_j^m)_{j\ge 1}$ is bounded, it follows from Lemma~\ref{leb2} that there is a unique solution $y^m=(y_j^m)_{j\ge 1}\in \mathcal{C}([0,\infty);\Xuni)$ to the Cauchy problem
\bear
\frac{dy_i^m}{dt} & = & -a_i^m\ y_i^m - \sum_{j=i}^\infty a_j^m\ y_j^m\,, \label{b8}\\
\frac{dy_j^m}{dt} & = & a_{j-i}^m\ y_{j-i}^m - a_j^m\ y_j^m\,, \quad j\ge i+1\,, \label{b9}
\eear
with initial condition $y^m(0)=y_0$. Introducing $\sigma_j^m:=\mbox{sign}(y_j^m)$, we infer from \eqref{b1}, \eqref{b8}, and \eqref{b9} that
\bean
\frac{d}{dt} \|y^m\|_{1,1} & = & \sum_{j=i}^\infty j\ \sigma_j^m\ \frac{d y_j^m}{dt} \\
& = & - i\ a_i^m\ |y_i^m| - \sum_{j=i}^\infty i\ a_j^m\ \sigma_i^m\ y_j^m + \sum_{j=2i}^\infty j\ a_{j-i}^m\ \sigma_j^m\ y_{j-i}^m - \sum_{j=i+1}^\infty j\ a_j^m\ |y_j^m| \\
& = & \sum_{j=i}^\infty \left( (i+j)\ \sigma_{i+j}^m\ \sigma_j^m - i\ \sigma_i^m\ \sigma_j^m - j \right)\ a_j^m\ |y_j^m| \\
& \le & 2i\ \sum_{j=i}^\infty a_j^m\ |y_j^m| \le 2Ai\ \|y^m\|_{1,1}\,,
\eean
hence
\beqn
\label{b10}
\|y^m(t)\|_{1,1} \le \|y_0\|_{1,1}\ e^{2Ai t}\,, \quad t\ge 0\,.
\eeqn
It next readily follows from \eqref{b1}, \eqref{b8}, and \eqref{b9} that
\bean
\left| \frac{dy_i^m}{dt} \right| & \le & Ai\ |y_i^m| + A\ \|y^m\|_{1,1}\,, \\
\left| \frac{dy_j^m}{dt} \right| & \le & A(j-i)\ |y_{j-i}^m| + Aj\ |y_j^m|\,, \quad j\ge i+1\,,
\eean
and thus
\beqn
\label{b11}
\sum_{j=i}^\infty \left| \frac{dy_j^m}{dt}(t) \right| \le 3A\ \|y^m(t)\|_{1,1}\le 3A\ \|y_0\|_{1,1}\ e^{2Ai t}\,, \quad t\ge 0
\eeqn
by \eqref{b10}.

Now, for all $j\ge 1$ and $T>0$, the sequence of functions $(y_j^m)_{N\ge 1}$ is bounded in $W^{1,\infty}(0,T)$ by \eqref{b10} and \eqref{b11} and thus relatively compact in $\mathcal{C}([0,T])$ by the Arzel\`a-Ascoli theorem. Consequently, there are a subsequence $(m_k)_{k\ge 1}$, $m_k\to\infty$, and a sequence of functions $y=(y_j)_{j\ge 1}$ such that
\beqn
\label{b12}
\lim_{k\to\infty} \sup_{t\in [0,T]}{\left| y_j^{m_k}(t) - y_j(t) \right| } = 0 \;\;\mbox{ for }\;\; j\ge 1 \;\;\mbox{ and }\;\; T>0\,.
\eeqn
If $j\ge i+1$, it is straightforward to deduce from \eqref{b9} and \eqref{b12} that $y_j$ actually belongs to $\mathcal{C}^1([0,\infty))$ and solves
\beqn
\label{b13}
\frac{dy_j}{dt} = a_{j-i}\ y_{j-i} - a_j\ y_j\,, \quad y_j(0)=y_{j,0}\,.
\eeqn
In addition, \eqref{b10} and \eqref{b12} imply that $y(t)\in\Xuni$ for all $t\ge 0$ and satisfies
\beqn
\label{b13b}
\|y(t)\|_{1,1} \le \|y_0\|_{1,1}\ e^{2Ai t}\,, \quad t\ge 0\,.
\eeqn

Passing to the limit in \eqref{b8} is more difficult because of the infinite series in its right-hand side. For that purpose, we need an additional estimate to control the tail of the series which we derive now: we first recall that, since $y_0\in\Xun$, a refined version of the de la Vall\'ee-Poussin theorem ensures that there is a nonnegative and non-decreasing convex function $\zeta\in \mathcal{C}^\infty([0,\infty))$ such that $\zeta(0)=0$, $\zeta'$ is a concave function,
\beqn
\label{b14}
\lim_{r\to\infty} \frac{\zeta(r)}{r} = \infty\,, \;\;\mbox{ and }\;\; \sum_{j=i}^\infty \zeta(j)\ |y_{j,0}| < \infty\,,
\eeqn
see \cite{DM75,Le77}. We infer from \eqref{b1}, \eqref{b8}, \eqref{b9}, and the properties of $\zeta$ that
\bean
\frac{d}{dt} \sum_{j=i}^\infty \zeta(j)\ |y_j^m| & = & \sum_{j=i}^\infty \left( \zeta(i+j)\ \mbox{sign}(y_{i+j}^m)\ \mbox{sign}(y_j^m) - \zeta(i)\ \mbox{sign}(y_{i}^m)\ \mbox{sign}(y_j^m) - \zeta(j) \right)\ a_j^m\ |y_j^m| \\
& \le & \sum_{j=i}^\infty \left( \zeta(i+j) + \zeta(i) - \zeta(j) \right)\ a_j^m\ |y_j^m| \\
& \le & \sum_{j=i}^\infty \left( \int_0^j \int_0^i \zeta''(r+s) \ dsdr + 2\ \zeta(i) \right)\ a_j^m\ |y_j^m| \\
& \le & \sum_{j=i}^\infty \left( \int_0^j i\ \zeta''(r) \ dr + 2\ \zeta(i) \right)\ a_j^m\ |y_j^m| \\
& \le & \sum_{j=i}^\infty \left( i\ \zeta'(j) + 2\ \zeta(i) \right)\ a_j^m\ |y_j^m| \\
& \le & 2A\ \zeta(i)\ \|y^m\|_{1,1} + A i\ \sum_{j=i}^\infty j\ \zeta'(j)\ |y_j^m|\,.
\eean
Owing to the concavity of $\zeta'$, we have $j\ \zeta'(j) \le 2\ \zeta(j)$ for $j\ge 1$ \cite[Lemma~A.1]{La01}. Inserting this estimate in the previous inequality and using \eqref{b10}, we end up with
$$
\frac{d}{dt} \sum_{j=i}^\infty \zeta(j)\ |y_j^m(t)| \le 2Ai\ \sum_{j=i}^\infty \zeta(j)\ |y_j^m(t)| + 2A\ \zeta(i)\ \|y_0\|_{1,1}\ e^{2Ai t}\,, \quad t\ge 0\,,
$$
and thus
\beqn
\label{b15}
\sum_{j=i}^\infty \zeta(j)\ |y_j^m(t)| \le \left( \sum_{j=i}^\infty \zeta(j)\ |y_{j,0}| + 2A\ \zeta(i)\ \|y_0\|_{1,1}\ t \right)\ e^{2Ai t}\,, \qquad t\ge 0\,,
\eeqn
the right-hand side of \eqref{b15} being finite by \eqref{b14}. It first follows from \eqref{b12} and \eqref{b15} by the Fatou lemma that
\beqn
\label{b16}
\sum_{j=i}^\infty \zeta(j)\ |y_j(t)| \le \left( \sum_{j=i}^\infty \zeta(j)\ |y_{j,0}| + 2A\ \zeta(i)\ \|y_0\|_{1,1}\ t \right)\ e^{2Ai t}\,, \qquad t\ge 0\,.
\eeqn
Notice next that, thanks to the superlinearity \eqref{b14} of $\zeta$, the estimates \eqref{b15} and \eqref{b16} provide us with a control of the tail of the series $\sum j\ y_j^m$ and $\sum j\ y_j$ which does not depend on $m$. More precisely, we infer from \eqref{b15}, \eqref{b16}, and the convexity of $\zeta$ that, for $T>0$, $t\in [0,T]$, and $J\ge 2i$,
\bean
\left\| (y^{m_k} - y)(t) \right\|_{1,1} & \le & \sum_{j=i}^{J-1} j\ \left| (y_j^{m_k} -y_j)(t) \right| + \sum_{j=J}^\infty j\ \left( |y_j^{m_k}(t)|+ |y_j(t)| \right) \\
& \le & \sum_{j=i}^{J-1} j\ \left| (y_j^{m_k} -y_j)(t) \right| + \frac{J}{\zeta(J)}\ \sum_{j=J}^\infty \zeta(j)\ \left( |y_j^{m_k}(t)|+ |y_j(t)| \right) \\
& \le & \sum_{j=i}^{J-1} j\ \left| (y_j^{m_k} -y_j)(t) \right| + \frac{2J}{\zeta(J)}\ \left( \sum_{j=i}^\infty \zeta(j)\ |y_{j,0}| + 2A\ \zeta(i)\ \|y_0\|_{1,1}\ T \right)\ e^{2Ai T}\,.
\eean
Owing to \eqref{b12}, we may pass to the limit as $k\to\infty$ in the previous inequality to deduce that
$$
\limsup_{k\to\infty} \sup_{t\in [0,T]} \left\| (y^{m_k} - y)(t) \right\|_{1,1} \le \frac{2J}{\zeta(J)}\ \left( \sum_{j=i}^\infty \zeta(j)\ |y_{j,0}| + 2A\ \zeta(i)\ \|y_0\|_{1,1}\ T \right)\ e^{2Ai T}\,.
$$
We next use \eqref{b14} to let $J\to\infty$ in the previous inequality and conclude that
\beqn
\label{b17}
\lim_{k\to\infty} \sup_{t\in [0,T]} \left\| (y^{m_k} - y)(t) \right\|_{1,1}=0\,.
\eeqn
Recalling \eqref{b1}, it is straightforward to deduce from \eqref{b17} that
$$
\lim_{k\to\infty} \sup_{t\in [0,T]} \left| \sum_{j=i}^\infty a_j^{m_k}\ y_j^{m_k}(t) - \sum_{j=i}^\infty a_j\ y_j(t) \right|=0
$$
for all $T>0$, from which we conclude that $y_i$ belongs to $\mathcal{C}^1([0,\infty))$ and solves
\beqn
\label{b18}
\frac{dy_i}{dt} = -a_i\ y_j - \sum_{j=i}^\infty a_j\ y_j\,, \quad y_i(0)=y_{i,0}\,.
\eeqn
Another consequence of \eqref{b17} is that $y\in \mathcal{C}([0,\infty);\Xuni)$ and is thus locally bounded in $\Xun$. This property in turn provides the boundedness of $dy/dt$ in $\ell^1$, the proof being similar to that of \eqref{b11}. We finally use once more \eqref{b17} to deduce from \eqref{vichy} (satisfied by $y^{m_k}$ thanks to Lemma~\ref{leb2}) that \eqref{evian} holds true. We have thus established the existence part of Proposition~\ref{prb1}.

As for uniqueness, if $y$ and $\hat{y}$ are two solutions to the Cauchy problem \eqref{b3}, a computation similar to that leading to \eqref{b10} gives $\|y(t)-\hat{y}(t)\|_{1,1} \le \|y(0)-\hat{y}(0)\|_{1,1}\ e^{2Ai t}=0$ for $t\ge 0$. Consequently, $y=\hat{y}$ and the uniqueness assertion of Proposition~\ref{prb1} is proved. \qed

%%%%%%%%%%%%%%%%%%%%%%%%%%%%%%%%%%%%%%%%%%%%%%%%%%%%%%%%%%%%%%%%%%%%%%%%%%%%%%%%%%
\begin{rem}\label{reb2b}
In fact, the derivation of \eqref{b15} is formal as the series $\sum \zeta(j) y_j^m$ is not known to converge \textit{a priori} (recall that $\zeta(j)$ is superlinear by \eqref{b14}). It can be justified rigorously by using classical truncation arguments. More specifically, for $R\ge 1$, define $\zeta_R(r)=\zeta(r)$ for $r\in [0,R]$ and $\zeta_R(r)=\zeta(R)+\zeta'(R) (r-R)$ for $r\ge R$. Then $\zeta_R$ enjoys the same properties as $\zeta$ and the sequence $(\zeta_R(j))_{j\ge 1}$ grows linearly with respect to $j$. We can then use \eqref{b25} to perform a similar computation as the one above leading to \eqref{b15} and obtain a bound on $\sum \zeta_R(j)\ y_j^m$ which does not depend on $R$ neither on $m$. The expected result then follows by letting $R\to\infty$ with the help of the Fatou lemma.
\end{rem}
%%%%%%%%%%%%%%%%%%%%%%%%%%%%%%%%%%%%%%%%%%%%%%%%%%%%%%%%%%%%%%%%%%%%%%%%%%%%%%%%%%

We now turn to specific properties of solutions to \eqref{b3} when $y_0\in \Xuni^+$.

%%%%%%%%%%%%%%%%%%%%%%%%%%%%%%%%%%%%%%%%%%%%%%%%%%%%%%%%%%%%%%%%%%%%%%%%%%%%%%%%%%
\begin{prop}\label{prb3}
Consider a sequence $(a_j)_{j\ge 1}$ satisfying \eqref{b1}, an initial condition $y_0=(y_{j,0})_{j\ge 1}\in \Xuni$ such that
\beqn
\label{b19}
y_0\in \Xuni^+ \;\;\mbox{ and }\;\; y_{i,0}>0\,,
\eeqn
and let $y$ be the corresponding solution to the Cauchy problem \eqref{b3}. There are $t_*\in (0,\infty]$ and $t_{*,1}\in [t_*,\infty]$ such that
\bear
& & y_i(t)>0 \;\;\mbox{ for }\;\; t\in [0,t_*) \;\;\mbox{ and }\;\; y_i(t_*)=0\,, \label{b20}\\
& & y_{ki}(t)>0 \;\;\mbox{ for }\;\; t\in (0,t_*) \;\;\mbox{ and }\;\; k\ge 2\,, \label{b21}\\
& & y_j(t)\ge 0 \;\;\mbox{ for }\;\; t\in [0,t_*) \;\;\mbox{ and }\;\; j\ge i+1\,, \label{b22}\\
& & y_j(t)>0 \;\;\mbox{ for }\;\; t\in [0,t_*) \;\;\mbox{ if }\;\; j\ge i+1 \;\;\mbox{ and }\;\; y_{j,0}>0\,, \label{b23}\\
& & \frac{dy_i}{dt}(t)<0 \;\;\mbox{ for }\;\; t\in [0,t_{*,1})\,, \label{b23b}
\eear
and
\beqn
\label{b24}
\|y(t)\|_{1,1} = \|y_0\|_{1,1} \;\;\mbox{ for }\;\; t\in [0,t_*)\,.
\eeqn
If $t_*<\infty$, then $t_{*,1}>t_*$ and the properties \eqref{b21}, \eqref{b22}, \eqref{b23}, and \eqref{b24} also hold true for $t=t_*$.
\end{prop}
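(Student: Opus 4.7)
The key tool is the Duhamel representation for each component $y_j$ with $j \ge i+1$: since the ODE \eqref{b13} is linear in $y_j$,
\begin{equation*}
y_j(t) = y_{j,0}\, e^{-a_j t} + \int_0^t e^{-a_j(t-s)}\, a_{j-i}\, y_{j-i}(s)\, ds, \qquad j \ge i+1,
\end{equation*}
which makes sense by virtue of $y\in \mathcal{C}([0,\infty);\Xuni)$ provided by Proposition~\ref{prb1}. I would begin by defining $t_* := \inf\{t \ge 0 : y_i(t) = 0\} \in (0,\infty]$; continuity of $y_i$ together with $y_{i,0} > 0$ yields $t_* > 0$, and \eqref{b20} follows at once.

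The properties \eqref{b22} and \eqref{b23} are then obtained by induction on $j$ via the Duhamel formula. For $i+1 \le j \le 2i-1$, the index $j-i$ lies in $\{1, \ldots, i-1\}$, so $y_{j-i} \equiv 0$ and Duhamel collapses to $y_j(t) = y_{j,0}\, e^{-a_j t}$, immediately settling both properties in this range. For $j \ge 2i$, the induction hypothesis gives $y_{j-i} \ge 0$ on $[0, t_*)$, so Duhamel yields \eqref{b22}, while \eqref{b23} follows from the positive contribution $y_{j,0}\, e^{-a_j t}$ whenever $y_{j,0} > 0$. A parallel induction on $k \ge 2$ yields \eqref{b21}: for $k=2$, strict positivity of $y_i$ on $[0, t_*)$ together with $a_i > 0$ makes the Duhamel integrand strictly positive on $(0, t_*)$, so $y_{2i} > 0$ there; the step uses the same mechanism with $y_{(k-1)i} > 0$ on $(0, t_*)$. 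Property \eqref{b24} is then immediate from \eqref{evian}, since $|y_j(t)| = y_j(t)$ for all $j\ge i$ throughout $[0, t_*)$.

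For \eqref{b23b}, I would rewrite $F_i$ as $y_i'(t) = -2 a_i y_i(t) - \sum_{j > i} a_j y_j(t)$, which is strictly negative on $[0, t_*)$ since $y_i > 0$ and the tail sum is nonnegative there. To upgrade this to $t_{*,1} > t_*$ when $t_* < \infty$, I would note that the Duhamel formula for $y_{2i}$ gives $y_{2i}(t_*) = y_{2i,0}\, e^{-a_{2i} t_*} + \int_0^{t_*} e^{-a_{2i}(t_*-s)}\, a_i\, y_i(s)\, ds > 0$, whence $y_i'(t_*) \le -a_{2i} y_{2i}(t_*) < 0$; continuity of $y_i'$ on $[0, \infty)$ (from $y_i \in \mathcal{C}^1([0, \infty))$, cf.~\eqref{b18}) then propagates this strict inequality to an open neighbourhood of $t_*$. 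Finally, the assertion that \eqref{b21}--\eqref{b24} persist at $t = t_*$ when $t_* < \infty$ follows by evaluating the same Duhamel formulas at $t_*$ and by continuity of $\|y\|_{1,1}$ on $[0, t_*]$. I do not anticipate a serious obstacle; the one subtle point is that \eqref{b21} is stated on the \emph{open} interval $(0, t_*)$ because $y_{ki,0}$ may vanish for $k \ge 2$, so strict positivity of $y_{ki}$ must be generated by the Duhamel integral and requires $t > 0$.
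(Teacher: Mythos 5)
Your proposal is correct and follows essentially the same route as the paper: both define $t_*$ as the first zero of $y_i$, establish \eqref{b22}--\eqref{b23} by induction on $j$ using the variation-of-constants (Duhamel) representation, deduce \eqref{b23b} from the bound $y_i' \le -a_{2i}\,y_{2i}$ together with \eqref{b21}, and get \eqref{b24} from \eqref{evian}. The only (harmless) divergence is \eqref{b21}, which you prove directly from the strict positivity of the Duhamel integrand, whereas the paper argues by contradiction from a hypothetical zero of $y_{ki}$; both arguments are valid.
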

%%%%%%%%%%%%%%%%%%%%%%%%%%%%%%%%%%%%%%%%%%%%%%%%%%%%%%%%%%%%%%%%%%%%%%%%%%%%%%%%%%

\noindent\textbf{Proof.}
We define
$$
t_*:= \sup\left\{ t>0\ : \ y_i(s)>0 \;\;\mbox{ for }\;\; s \in [0,t) \right\}\,,
$$
and first notice that $t_*>0$ due to the continuity of $y_i$ and the positivity \eqref{b19} of $y_{i,0}$. Clearly, $y_i$ fulfils \eqref{b20}.

Consider next $j\in \{i+1, \ldots, 2i-1\}$ (if this set is non-empty). Since $y(t)\in\Xuni$ for $t\ge 0$, it follows from \eqref{b3} that, for $t\in [0,t_*)$, $dy_j(t)/dt = - a_j\ y_j(t)$ and thus $y_j(t)=y_{j,0}\ e^{-a_j t}\ge 0$. We next deduce from \eqref{b3} that, for $t\in [0,t_*)$, $dy_{2i}(t)/dt=a_i\ y_i(t)-a_{2i}\ y_{2i}(t)\ge -a_{2i}\ y_{2i}(t)$, whence $y_{2i}(t)\ge y_{2i,0}\ e^{-a_{2i} t}\ge 0$. We next argue in a similar way to prove by induction that $y_j(t)\ge 0$ for $t\in [0,t_*)$ so that $y$ fulfils \eqref{b22}.

We now improve the positivity properties of $y$ and prove \eqref{b21} and \eqref{b23}. Consider first $j\ge i+1$ for which $y_{j,0}>0$. By \eqref{b3} and \eqref{b22}, we have $dy_j(t)/dt=a_{j-i}\ y_{j-i}(t)-a_j\ y_j(t)\ge -a_j\ y_j(t)$ for $t\in [0,t_*)$, whence $y_j(t)\ge y_{j,0}\ e^{-a_j t}> 0$ and \eqref{b23}. To prove \eqref{b21}, we argue by contradiction and assume that there are $k\ge 2$ and $t_0\in (0,t_*)$ (or $t_0\in (0,t_*]$ if $t_*<\infty$) such that $y_{ki}(t_0)=0$. We infer from \eqref{b3} and the variation of constants formula that
$$
0=y_{ki}(t_0)=e^{-a_{ki} t_0}\ y_{ki,0} + a_{(k-1)i}\ \int_0^{t_0} e^{-a_{ki}(t_0-s)}\ y_{(k-1)i}(s)\ ds\,.
$$
The non-negativity of $y_{ki,0}$ and $y_{(k-1)i}$ and the continuity of $y_{(k-1)i}$ then imply that $y_{ki,0}=0$ and $y_{(k-1)i}(t)=0$ for $t\in [0,t_0]$. At this point, either $k=2$ and we have a contradiction with \eqref{b20}. Or $k>2$ and we proceed by induction to show that $y_{li}(t)=0$ for $t\in [0,t_0]$ and $l\in \{1,\ldots,k\}$, again leading us to a contradiction with \eqref{b20}.

The property \eqref{b23b} now follows from \eqref{b1} and \eqref{b21}: indeed, by \eqref{b3} we have
$$
\frac{dy_i}{dt}(t) = - a_i\ y_i(t) - \sum_{j=i}^\infty a_j\ y_j(t) \le -a_{2i}\ y_{2i}(t)<0
$$
for $t\in [0,t_*)$ (and also for $t=t_*$ if $t_*<\infty$,) so that
$$
t_{*,1} := \sup\left\{ t>0\ : \ \frac{dy_i}{dt}(s)<0 \;\;\mbox{ for }\;\; s \in [0,t) \right\}\ge t_*\,,
$$
and $t_{*,1}>t_*$ if $t_*<\infty$.

Finally, since $y(t)$ belongs to $\mathcal{X}_{1,i}^+$ for $t\in [0,t_*)$, \eqref{b24} readily follows from \eqref{evian}. \qed

\medskip

We next turn to the study of the finiteness of the time $t_*$ defined in Proposition~\ref{prb3}.

%%%%%%%%%%%%%%%%%%%%%%%%%%%%%%%%%%%%%%%%%%%%%%%%%%%%%%%%%%%%%%%%%%%%%%%%%%%%%%%%%%
\begin{prop}\label{prb5}
Consider a sequence $(a_j)_{j\ge 1}$ satisfying \eqref{b1}, an initial condition $y_0=(y_{j,0})_{j\ge 1}\in \Xuni$ satisfying \eqref{b19} and let $y$ be the corresponding solution to the Cauchy problem \eqref{b3}. Assume further that there is $\delta_0>0$ such that
\beqn
\label{b26}
0<\delta_0 \le a_j\,, \quad j\ge 1\,.
\eeqn
If  $t_*\in (0,\infty]$ denotes the time introduced in Proposition~\ref{prb3}, then $t_*\in (0,\infty)$.
\end{prop}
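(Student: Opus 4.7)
My plan is to exhibit a strictly negative upper bound on $dy_i/dt$ that persists on a subinterval of $[0,t_*)$ of positive length, forcing $y_i$ (which is positive on $[0,t_*)$ by Proposition~\ref{prb3}) to hit zero in finite time. The key auxiliary quantity is the tail sum
$$
R(t):=\sum_{j=i+1}^\infty y_j(t), \qquad t\in [0,t_*).
$$

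The first step is to show that $R$ is non-decreasing on $[0,t_*)$. Differentiating term-by-term (justified by Proposition~\ref{prb1}, which puts $dy/dt$ in $\ell^1$) using the $j\ge i+1$ component of \eqref{b2}, and exploiting the fact that $y_k(t)=0$ for $k<i$ because $y(t)\in\Xuni$, the sum $\sum_{j\ge i+1}\bigl(a_{j-i}y_{j-i}-a_j y_j\bigr)$ telescopes down to a single boundary term and gives
$$
R'(t)=a_i\, y_i(t)\ge 0.
$$
The second step uses the equation for $y_i$ from \eqref{b2} together with the uniform lower bound \eqref{b26}:
$$
\frac{dy_i}{dt}(t)=-2 a_i\, y_i(t)-\sum_{j=i+1}^\infty a_j\, y_j(t)\le -\delta_0\, R(t), \qquad t\in [0,t_*).
$$

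To finish, I fix any $t_0\in (0,t_*)$ and invoke \eqref{b21} of Proposition~\ref{prb3} (with $k=2$) to get $y_{2i}(t_0)>0$, hence $R(t_0)\ge y_{2i}(t_0)>0$. Monotonicity of $R$ then gives $R(t)\ge R(t_0)$ for $t\in [t_0,t_*)$, so $dy_i/dt\le -\delta_0 R(t_0)$ on this interval and
$$
y_i(t)\le y_i(t_0)-\delta_0\, R(t_0)\,(t-t_0), \qquad t\in [t_0,t_*).
$$
Positivity of $y_i$ on $[0,t_*)$ then forces $t_*\le t_0+y_i(t_0)/(\delta_0\, R(t_0))<\infty$.

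The main (mild) obstacle is that $R(0)$ can equal $0$ when the initial condition is concentrated at index $i$; this is why the lower bound on $R$ has to be evaluated at a time $t_0>0$ rather than at $0$, and why the strict positivity of $y_{2i}$ on $(0,t_*)$ from Proposition~\ref{prb3} (and not merely the non-negativity \eqref{b22}) is essential. Conceptually, the identity $R'=a_i y_i$ is the core of the argument: it expresses the fact that under the dynamics governed by \eqref{b3}, mass can only move from index $i$ to strictly larger indices, so the pool of particles larger than $i$ grows monotonically and continues to drain $y_i$ at a rate bounded away from zero.
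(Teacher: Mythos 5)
Your proof is correct, and it takes a genuinely different route from the paper's. You exploit the monotonicity of the tail count $R(t)=\sum_{j\ge i+1}y_j(t)$, whose derivative telescopes to $a_i y_i\ge 0$ (this is the content of identity \eqref{b25} with $g_j=1$, split between the $i$-th component and the rest, so the term-by-term manipulation is justified at the same level of rigour as in the paper), and then close the argument with the differential inequality $dy_i/dt\le -\delta_0 R$ evaluated from a positive time $t_0$ at which \eqref{b21} guarantees $R(t_0)\ge y_{2i}(t_0)>0$. The paper instead tracks the ratio $M_{-1}/M_0$ of the mean inverse size, with $M_{-1}=\sum_{j\ge i}y_j/j$ and $M_0=\sum_{j\ge i}y_j$, and shows via \eqref{b25} that this nonnegative quantity decreases at rate at least $\delta_0/(2i)$, which yields the explicit and uniform bound $t_*\le 2iM_{-1}(0)/(\delta_0 M_0(0))\le 2/\delta_0$. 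The trade-off is quantitative: your argument is more elementary and handles cleanly the degenerate case $R(0)=0$ (monodisperse data concentrated at size $i$), but your bound on $t_*$ depends on the solution at the intermediate time $t_0$ and so gives no control uniform in the initial condition, whereas the paper's Lyapunov-type functional produces a bound depending only on $\delta_0$. Since the proposition only asserts finiteness of $t_*$, your argument fully suffices for the statement as given.
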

%%%%%%%%%%%%%%%%%%%%%%%%%%%%%%%%%%%%%%%%%%%%%%%%%%%%%%%%%%%%%%%%%%%%%%%%%%%%%%%%%%

\noindent\textbf{Proof.} For $t\ge 0$, we put
$$
M_0(t):= \sum_{j=i}^\infty y_j(t) \;\;\mbox{ and }\;\; M_{-1}(t):= \sum_{j=i}^\infty \frac{y_j(t)}{j}\,.
$$
By \eqref{b20}, $M_0(t)>0$ for $t\in [0,t_*)$ and it follows from \eqref{b25} that
\bean
\frac{d}{dt} \left( \frac{M_{-1}}{M_0} \right) & = & \frac{1}{M_0}\ \sum_{j=i}^\infty \left( \frac{1}{i+j} - \frac{1}{i} - \frac{1}{j} \right)\ a_j\ y_j + \frac{M_{-1}}{M_0^2}\ \sum_{j=i}^\infty a_j\ y_j \\
& = & \frac{1}{M_0}\ \sum_{j=i}^\infty \left( \frac{1}{i+j} - \frac{1}{j} + \frac{M_{-1}}{M_0}- \frac{1}{i} \right)\ a_j\ y_j\,.
\eean
Observing that
$$
\frac{1}{i+j} \le \frac{1}{j} \;\;\mbox{ and }\;\;  \frac{M_{-1}}{M_0} \le \frac{1}{i}\,,
$$
we infer from \eqref{b26} that
\bean
\frac{d}{dt} \left( \frac{M_{-1}}{M_0} \right) & \le & \frac{\delta_0}{M_0}\ \sum_{j=i}^\infty \left( \frac{1}{i+j} - \frac{1}{j} + \frac{M_{-1}}{M_0}- \frac{1}{i} \right)\ y_j\\
& \le & \frac{\delta_0}{M_0}\ \left( \sum_{j=i}^\infty \left( \frac{1}{i+j} - \frac{1}{i} \right)\ y_j -M_{-1} + \frac{M_{-1}}{M_0}\ M_0 \right) \\
& \le & - \frac{\delta_0}{M_0}\ \sum_{j=i}^\infty \frac{j}{i(i+j)}\ y_j \le - \frac{\delta_0}{2i\ M_0}\ \sum_{j=i}^\infty  y_j \le - \frac{\delta_0}{2i}\,.
\eean
Consequently, we have
$$
0\le \frac{M_{-1}}{M_0}(t) \le \frac{M_{-1}}{M_0}(0) - \frac{\delta_0}{2i}\ t
$$
for $t\in [0,t_*)$ which implies that $t_*\le (2i M_{-1}(0))/(\delta_0 M_0(0))\le 2/\delta_0$ and is thus finite. \qed

%%%%%%%%%%%%%%%%%%%%%%%%%%%%%%%%%%%%%%%%%%%%%%%%%%%%%%%%%%%%%%%%%%%%%%%%%%%%%%%%%%
%%%%%%%%%%%%%%%%%%%%%%%%%%%%%%%%%%%%%%%%%%%%%%%%%%%%%%%%%%%%%%%%%%%%%%%%%%%%%%%%%%
\subsection{Proof of Theorem~\ref{thc1}}\label{pot}
%%%%%%%%%%%%%%%%%%%%%%%%%%%%%%%%%%%%%%%%%%%%%%%%%%%%%%%%%%%%%%%%%%%%%%%%%%%%%%%%%%
%%%%%%%%%%%%%%%%%%%%%%%%%%%%%%%%%%%%%%%%%%%%%%%%%%%%%%%%%%%%%%%%%%%%%%%%%%%%%%%%%%

The construction of the functions $(\ell,x)$ is performed by induction on the minimal size, noticing that $x$ solves an infinite system of ordinary differential equations similar to \eqref{b3} on each time interval where $\ell$ is constant.

\medskip

\noindent\textbf{Proof of Theorem~\ref{thc1}.} \\
\noindent\textbf{Step~1:} By \eqref{c1}, the sequence $(K(1,j))_{j\ge 1}$ fulfils the assumptions \eqref{b1} (with $A=\kappa$) and \eqref{b26} (with $\delta_0=\delta_1$) while $x_0$ satisfies \eqref{b19} with $i=1$. According to Propositions~\ref{prb1}, \ref{prb3}, and~\ref{prb5}, there is a unique solution $x^{(1)}\in\mathcal{C}([0,\infty);\Xun)$ to the Cauchy problem
$$
\frac{dx^{(1)}}{dt} = b^{(1)}(x^{(1)})\,, \qquad x^{(1)}(0)=x_0\,,
$$
and there is $t_1\in (0,\infty)$ such that
\bean
& & x_1^{(1)}(t)>0 \;\;\mbox{ for }\;\; t\in [0,t_1) \;\;\mbox{ and }\;\; x_1^{(1)}(t_1)=0\,, \\
& & x_j^{(1)}(t)>0 \;\;\mbox{ for }\;\; t\in (0,t_1] \;\;\mbox{ and }\;\; j\ge 2\,, \\
& & \left\| x^{(1)}(t) \right\|_{1,1} = \|x_0\|_{1,1} \;\;\mbox{ for }\;\; t\in [0,t_1]\,.
\eean
We then put
$$
\ell(t):= 1 \;\;\mbox{ and }\;\; x(t):=x^{(1)}(t) \;\;\mbox{ for }\;\; t\in [0,t_1)\,.
$$
Clearly, $x$ fulfils \eqref{c4b}, \eqref{c5a}, and \eqref{c7} for $i=1$.

\smallskip

\noindent\textbf{Step~2:} Assume now that we have constructed $(\ell,x)$ up to some time $t_i$ for some $i\ge 1$. On the one hand, owing to \eqref{c1}, the sequence $(K(i+1,j))_{j\ge 1}$ fulfils the assumptions \eqref{b1} (with $A=\kappa\ (i+1)$) and \eqref{b26} (with $\delta_0=\delta_{i+1}$). On the other hand, the sequence $x(t_i)$ belongs to $\mathcal{X}_{1,i+1}^+$ with $x_j(t_i)>0$ for $j\ge i+1$ by \eqref{c6}. We are then in a position to apply Propositions~\ref{prb1}, \ref{prb3}, and~\ref{prb5} and conclude that there is a unique solution $x^{(i+1)}\in\mathcal{C}([t_i,\infty);\mathcal{X}_{1,i+1})$ to the Cauchy problem
$$
\frac{dx^{(i+1)}}{dt} = b^{(i+1)}(x^{(i+1)})\,, \qquad x^{(i+1)}(t_i)=x(t_i)\,,
$$
and there is $t_{i+1}\in (0,\infty)$ such that
\bean
& & x_{i+1}^{(i+1)}(t)>0 \;\;\mbox{ for }\;\; t\in [t_i,t_{i+1}) \;\;\mbox{ and }\;\; x_{i+1}^{(i+1)}(t_{i+1})=0\,, \\
& & x_j^{(i+1)}(t)>0 \;\;\mbox{ for }\;\; t\in (t_i,t_{i+1}] \;\;\mbox{ and }\;\; j\ge i+2\,,\\
& & \left\| x^{(i+1)}(t) \right\|_{1,1} = \|x(t_i)\|_{1,1} \;\;\mbox{ for }\;\; t\in [t_i,t_{i+1}]\,.
\eean
We then put
$$
 \ell(t):= i+1 \;\;\mbox{ and }\;\; x(t):=x^{(i+1)}(t) \;\;\mbox{ for }\;\; t\in [t_i,t_{i+1})\,.
$$
It is then easy to check that $x\in \mathcal{C}([0,t_{i+1};\Xun)$ and fulfils \eqref{c4b}, \eqref{c5a}, \eqref{c6}, and \eqref{c7} for $j\in\{1,\ldots,i+1\}$. This completes the induction process and the proof of the existence part of Theorem~\ref{thc1}.

\smallskip

\noindent\textbf{Step~3:}  If $(\ell,x)$ and $(\hat{\ell},\hat{x})$ both satisfy the properties listed in Theorem~\ref{thc1}, we deduce from Proposition~\ref{prb1} that $x(t)=\hat{x}(t)$ for $t\in [0,t_1\wedge \hat{t}_1]$. In particular, $x_1$ and $\hat{x}_1$ vanish at the same time $t_1\wedge \hat{t}_1$ which implies that $t_1=\hat{t}_1$. We next argue by induction to conclude that $\ell=\hat{\ell}$ and $x=\hat{x}$. \qed

%%%%%%%%%%%%%%%%%%%%%%%%%%%%%%%%%%%%%%%%%%%%%%%%%%%%%%%%%%%%%%%%%%%%%%%%%%%%%%%%%%
%%%%%%%%%%%%%%%%%%%%%%%%%%%%%%%%%%%%%%%%%%%%%%%%%%%%%%%%%%%%%%%%%%%%%%%%%%%%%%%%%%
\section{Convergence of the stochastic process}\label{sec:csp}
%%%%%%%%%%%%%%%%%%%%%%%%%%%%%%%%%%%%%%%%%%%%%%%%%%%%%%%%%%%%%%%%%%%%%%%%%%%%%%%%%%
%%%%%%%%%%%%%%%%%%%%%%%%%%%%%%%%%%%%%%%%%%%%%%%%%%%%%%%%%%%%%%%%%%%%%%%%%%%%%%%%%%
\setcounter{thm}{0}
\setcounter{equation}{0}

In this section, we study the stochastic process introduced in Section~\ref{secdescrip} and prove Theorem~\ref{thc3}. The proof is performed along the lines of the general scheme developed in \cite{DN08} with the following main differences: the deterministic system of ordinary differential equations \eqref{c5a} considered herein has its solutions in an infinite-dimensional vector space and changes when the minimal size $\ell$ jumps.

\medskip

Let $K$ be a coagulation kernel satisfying \eqref{cubitus}.
We fix an initial condition $x_0$ satisfying \eqref{c3} and let $x$ be the corresponding solution to \eqref{c5a}. Owing to \eqref{c7} and \eqref{cubitus}, we may argue as in the proof of Proposition~\ref{prb1} to show that, for $i\ge 1$,
\begin{equation}
\label{senechal}
\left \| \frac{dx}{dt}(t) \right\|_1 \le  3\kappa_i \,, \qquad t\in [t_{i-1},t_i]\,.
\end{equation}
Consider a sequence of random initial data $\left( X_0^N \right)_{N\ge 1}$ in $\ell_\NN^1$ satisfying \eqref{asterix} and \eqref{hyp1}. For each $N\ge 1$, $X^N$ denotes the Markov process described in Section~\ref{secdescrip} starting from $X_0^N$ and $\tilde{X}^N:=X^N/N$ its renormalized version. To prove Theorem~\ref{thc3},  we need to introduce some specific times relative to the extinction of some sizes of particle. Let $T_0^N=0$ and define
\begin{equation}
\label{semaphore}
T_i^N:=\inf\{t>T_{i-1}^N \ : \ X_i^N(t)=0\}\,, \quad  \sigma_i^N:=T_i^N-T_{i-1}^N\,, \quad i\ge 1\,.
\end{equation}
We also put $s_i:=t_i-t_{i-1}$ for $i\ge 1$, the times $(t_i)_{i\ge 0}$ being defined in Theorem~\ref{thc1}.

We begin by proving the following proposition.

%%%%%%%%%%%%%%%%%%%%%%%%%%%%%%%%%%%%%%%%%%%%%%%%%%%%%%%%%%%%%%%%%%%%%%%%%%%%%%%%%%
\begin{prop} \label{prop synthese}
For all $I\geq 0, $ there exist positive constants $C_0(I)$, $C_0(I)'$, and an integer $N_0(I)$ such that
$$ \P\left(\sup_{0\leq t\leq T_I^N}    || \tilde{X}^N(t)-x(t)  ||_1 > \frac{C_0(I)}{N^{1/4}} \right) \leq \frac{C_0(I)'}{N^{1/4}} \;\;\;\mbox{ for }\;\;\; N\ge N_0(I)\,. $$
\end{prop}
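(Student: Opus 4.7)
I would argue by induction on $I$. The base case $I=0$ is immediate from $T_0^N = 0 = t_0$ together with the hypothesis \eqref{hyp1}. Suppose the estimate is established at level $I-1$, and work on the event $\mathcal E_{I-1}$ of probability at least $1 - C_0(I-1)'/N^{1/4}$ on which the inductive $\ell^1$ closeness holds on $[0, T_{I-1}^N]$. On $\mathcal E_{I-1}$ I would first extract three consequences: that $|T_{I-1}^N - t_{I-1}| = O(N^{-1/4})$; that $\tilde X_j^N(T_{I-1}^N) = 0$ for every $j < I$ (using the monotonicity of the stochastic dynamics, namely that once a component with index at or below the current minimum becomes zero, it stays zero, since no jump can produce particles of that size); and, via \eqref{c6}, that $\tilde X_I^N(T_{I-1}^N)$ lies near $x_I(t_{I-1}) > 0$, hence is strictly positive for $N$ large. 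These three facts force the minimum size of $X^N$ to equal $I$ throughout $[T_{I-1}^N, T_I^N]$, so the drift $\tilde\beta$ in \eqref{eq2} coincides with $b^{(I)}(\tilde X^N)$ up to corrections of order $1/N$.

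The core of the argument is then a Darling--Norris-type martingale estimate on $[T_{I-1}^N, T_I^N]$. Using Dynkin's formula associated with the generator \eqref{gen}, write
\begin{equation*}
\tilde X^N(t) - \tilde X^N(T_{I-1}^N) = \int_{T_{I-1}^N}^{t} \tilde\beta(\tilde X^N(s))\, ds + M^N(t) - M^N(T_{I-1}^N),
\end{equation*}
where $M^N$ is the componentwise martingale whose quadratic variation, controlled via \eqref{QN}--\eqref{eq2vq}, satisfies $\tilde\alpha_j(\xi) = O(\kappa_I/N)$ on this interval. Let $x^{(I)}$ denote the solution of $dx^{(I)}/dt = b^{(I)}(x^{(I)})$ with initial datum $x(t_{I-1})$, so $x^{(I)} = x$ on $[t_{I-1}, t_I]$ while $x^{(I)}$ continues slightly past $t_I$ (Proposition~\ref{prb1} applied to the sequence $(K(I, j))_j$). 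Subtracting and taking the $\ell^1$ norm yields
\begin{equation*}
\|\tilde X^N(t) - x^{(I)}(t)\|_1 \le \|\tilde X^N(T_{I-1}^N) - x^{(I)}(T_{I-1}^N)\|_1 + C(I)\int_{T_{I-1}^N}^{t} \|\tilde X^N(s) - x^{(I)}(s)\|_1\, ds + E^N(t),
\end{equation*}
where $E^N(t)$ collects the $O(1/N)$ drift correction, the martingale supremum $\sup\|M^N - M^N(T_{I-1}^N)\|_1 = O(N^{-1/2})$ bounded by Doob's inequality (after truncating the series at a slowly growing level and using \eqref{ielosubmarine} to absorb the tail), and the boundary mismatch between $T_{I-1}^N$ and $t_{I-1}$ controlled by the a priori bound \eqref{senechal}. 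Gronwall's lemma then closes the estimate on $[T_{I-1}^N, T_I^N]$.

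The main obstacle, and what actually closes the induction loop, is the transition-time analysis: I must show $|T_I^N - t_I| = O(N^{-1/4})$. For this I would invoke \eqref{b23b} of Proposition~\ref{prb3}, which yields the strict negative slope $(dx_I^{(I)}/dt)(t_I) < 0$, so that $x_I^{(I)}$ crosses zero transversally at $t_I$. Combining this transversality with the uniform $N^{-1/4}$ closeness of $\tilde X_I^N$ to $x_I^{(I)}$ on a two-sided neighbourhood of $t_I$ forces $\tilde X_I^N$ to hit zero in a window of width $O(N^{-1/4})$ around $t_I$, pinning down $T_I^N$. A secondary technical point is that $b^{(I)}$ is not globally Lipschitz on $\mathcal X_{1,1}$ because of the infinite series in $b_I^{(I)}$, but assumption \eqref{cubitus} provides the uniform bound $K(I, j) \le \kappa_I$ for $j \ge I$, which yields the $\ell^1$ Lipschitz estimate needed for the Gronwall step with constant of order $\kappa_I$.
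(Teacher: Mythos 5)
Your plan follows essentially the same route as the paper: an interval-by-interval Dynkin/Gronwall estimate in which the $\ell^1$ martingale term is controlled by projecting onto the first $d\sim N^{1/4}$ coordinates (Doob's inequality) while the tail is absorbed via the conserved first moment, combined with the transversal zero-crossing of $x_I^{(I)}$ at $t_I$ (from \eqref{b23b}) to pin $T_I^N$ within $O(N^{-1/4})$ of $t_I$, the whole being assembled by induction on $I$ exactly as in Propositions~\ref{prop1}--\ref{prop2} and Corollary~\ref{cor1}. The only slip is quantitative and harmless: after the truncation the $\ell^1$ martingale contribution is $\sqrt{d}\cdot O(N^{-3/8})=O(N^{-1/4})$ rather than $O(N^{-1/2})$, which still balances the tail term $1/d$ and yields the claimed overall rate.
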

%%%%%%%%%%%%%%%%%%%%%%%%%%%%%%%%%%%%%%%%%%%%%%%%%%%%%%%%%%%%%%%%%%%%%%%%%%%%%%%%%%

Two steps are needed to prove Proposition~\ref{prop synthese}: we first consider $i\ge 1$ and work on the interval  $[T^N_{i-1}, T^N_{i} ]$, showing that the behaviour at any time $t\in (T_{i-1}^N,T_i^N]$ only depends on the behaviour at the ``initial'' time $ T^N_{i-1} $  (Proposition~\ref{prop1}). We then argue by induction on $i$ to prove a ``global'' convergence result (Proposition~\ref{prop2}).

%%%%%%%%%%%%%%%%%%%%%%%%%%%%%%%%%%%%%%%%%%%%%%%%%%%%%%%%%%%%%%%%%%%%%%%%%%%%%%%%%%
\begin{prop}  \label{prop1}
For all $i\geq 1$ and  $\gamma>0$, there exist positive constants $C_1(\gamma,i)$, $C_1(i)'$, $\bar{s}_i\in (s_i,s_{i}+1)$, $\eta_i$, and an integer $N_1(\gamma,i)$ such that
\begin{equation}
\label{panoramix}
x_i^{(i)}(t_{i-1}+\bar{s}_i)<0 \,, \qquad \frac{dx_i^{(i)}}{dt}(t_{i-1}+s)\le - \eta_i < 0 \;\;\mbox{ for }\;\; s\in [0,\bar{s}_i]\,,
\end{equation}
\begin{eqnarray*}
& & \P\left(\sup_{0\le s\leq \sigma_i^N} \| \tilde{X}^N(T_{i-1}^N+s)-x^{(i)}(t_{i-1}+s)\|_1 > \frac{C_1(\gamma,i)}{N^{1/4} } \right)\leq \frac{C_1(i)'}{N^{1/4}} +\P(\Omega_{i,\gamma}^c)\,, \\
& & \P\left( \sigma_i^N > \bar{s}_i \right) \le \frac{C_1(i)'}{N^{1/4}} +\P(\Omega_{i,\gamma}^c)\,, \\
\end{eqnarray*}
for $N\ge N_1(\gamma,i)$, where
$$
\Omega_{i,\gamma}:=\left\{ ||\tilde{X}^N(T_{i-1}^N) -x(t_{i-1})||_1\leq \frac{\gamma}{N^{1/4}}  \right\}\,,
$$
and $x^{(i)}~: [t_{i-1},\infty) \to \Xun$ denotes the solution to the differential equation
\begin{equation}\label{equadiff}
\frac{d x^{(i)}}{dt}(t)=b^{(i)}(x^{(i)}(t)) \quad \mbox{for } t\ge t_{i-1}\,, \quad x^{(i)}(t_{i-1})=x(t_{i-1})\,.
\end{equation}
\end{prop}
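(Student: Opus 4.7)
The plan is to combine a continuation argument for the deterministic ODE past $t_i$ with a classical Gronwall--martingale estimate for the renormalised process, carried out on the random time interval $[T^N_{i-1}, T^N_{i-1}+\sigma^N_i]$ where, by construction, the minimal size of $\tilde X^N$ is exactly $i$. First I would establish \eqref{panoramix}: by uniqueness $x^{(i)}$ coincides with $x$ on $[t_{i-1},t_i]$, and Proposition~\ref{prb3} (applied with initial time $t_{i-1}$) yields a time $t_{*,1}>t_i$ with $dx^{(i)}_i/dt(t)<0$ on $[t_{i-1},t_{*,1})$. Assumption \eqref{cubitus} makes $b^{(i)}$ globally Lipschitz on $\Xun$ with constant of order $\kappa_i$ (cf.\ \eqref{spip}), so $x^{(i)}$ extends as an $\Xun$-valued solution past $t_i$ (positivity is no longer relevant). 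Picking $\bar s_i\in(s_i,\min\{s_i+1,t_{*,1}-t_{i-1}\})$ and $\eta_i:=-\max_{s\in[0,\bar s_i]} dx^{(i)}_i/dt(t_{i-1}+s)>0$ then gives $x^{(i)}_i(t_{i-1}+\bar s_i)\le-\eta_i(\bar s_i-s_i)<0$, using $x^{(i)}_i(t_{i-1}+s_i)=0$.

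Next, I would compare \eqref{eq2} with \eqref{spip}: on the set $\{\ell_\xi=i\}$ the drift satisfies $\tilde\beta(\xi)=b^{(i)}(\xi)+\rho^N(\xi)$ with $\|\rho^N(\xi)\|_1\le C(i)/N$. Therefore on $[T^N_{i-1},T^N_{i-1}+\sigma^N_i]$ the Dynkin decomposition coordinate-by-coordinate (take $f=\mathrm{proj}_j$ in \eqref{gen}) gives, for every $j$ and $s\le\sigma^N_i$,
\[
\tilde X^N_j(T^N_{i-1}+s)=\tilde X^N_j(T^N_{i-1})+\int_0^s b^{(i)}_j(\tilde X^N(T^N_{i-1}+u))\,du+R^N_j(s)+M^N_j(s),
\]
with $\|R^N(s)\|_1=O(s/N)$ and $M^N_j$ a martingale whose bracket is read off from \eqref{QN}--\eqref{eq2vq}. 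Subtracting the corresponding integral identity for $x^{(i)}_j$, using the Lipschitz bound $\|b^{(i)}(y_1)-b^{(i)}(y_2)\|_1\le 3\kappa_i\|y_1-y_2\|_1$, and applying Gronwall's lemma on $[0,\bar s_i\wedge\sigma^N_i]$, I obtain
\[
\sup_{s\le\bar s_i\wedge\sigma^N_i}\|\tilde X^N(T^N_{i-1}+s)-x^{(i)}(t_{i-1}+s)\|_1\le e^{3\kappa_i\bar s_i}\Bigl(\|\tilde X^N(T^N_{i-1})-x(t_{i-1})\|_1+\frac{C(i)\bar s_i}{N}+\sup_{u\le\bar s_i}\|M^N(u)\|_1\Bigr).
\]

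The first two summands inside the parentheses are $O(\gamma/N^{1/4})$ on $\Omega_{i,\gamma}$. For the martingale term I would split the sum at $J=\lfloor N^{1/4}\rfloor$: for $j\le J$, Doob's $L^2$ inequality and \eqref{eq2vq} give $\E[\sup_{s\le\bar s_i}(M^N_j(s))^2]\le C(i,\bar s_i)/N$, so summing yields $\E[\sup_{s\le\bar s_i}\sum_{j\le J}|M^N_j(s)|]\le C'J/\sqrt N=O(N^{-1/4})$; for $j>J$ I use mass conservation \eqref{ielosubmarine} and \eqref{c7} to bound the $\ell^1$-tail deterministically by $(2/J)\sum_j j(\tilde X^N_j+x^{(i)}_j)\le 4/J=O(N^{-1/4})$. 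Markov's inequality then gives the first claimed bound with the prescribed rate. The second bound, $\sigma^N_i\le\bar s_i$, is a consequence: if on the contrary $\sigma^N_i>\bar s_i$ then $\tilde X^N_i(T^N_{i-1}+\bar s_i)\ge1/N$, while on the event just controlled we have $|\tilde X^N_i(T^N_{i-1}+\bar s_i)-x^{(i)}_i(t_{i-1}+\bar s_i)|\le C_1(\gamma,i)/N^{1/4}$ and $x^{(i)}_i(t_{i-1}+\bar s_i)\le-\eta_i(\bar s_i-s_i)$; for $N$ large enough this is a contradiction.

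The main technical obstacle is handling the martingale in the non-Hilbertian $\ell^1$-norm: the natural quadratic variation in \eqref{QN}--\eqref{eq2vq} is an $\ell^2$-object, yet the theorem is stated in $\ell^1$. The cut-off $J=\lfloor N^{1/4}\rfloor$ (with mass conservation controlling the tail and coordinate-wise Doob controlling the head) is what bridges the gap, and it is this balance that forces the rate $N^{-1/4}$. A secondary care must be taken to ensure that the stopping-time restriction to $[0,\sigma^N_i\wedge\bar s_i]$ is harmless, which is clean here because the drift formulas \eqref{eq2} reduce to $b^{(i)}$ plus an $O(1/N)$ remainder precisely on that set and because hypothesis \eqref{cubitus} makes all constants depend only on $i$, not on $N$ or on the tail of the configuration.
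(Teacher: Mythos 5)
Your overall architecture is the same as the paper's: establish \eqref{panoramix} from Proposition~\ref{prb3} (the strict negativity of $dx^{(i)}_i/dt$ slightly beyond $t_i$), compare $\tilde\beta$ with $b^{(i)}$ up to an $O(1/N)$ remainder via \eqref{eq2} and \eqref{spip}, run Gronwall on $[0,\bar s_i\wedge\sigma_i^N]$, split coordinates at $d\sim N^{1/4}$ with the tail controlled by the first moment, and deduce $\sigma_i^N\le\bar s_i$ by contradiction from $x_i^{(i)}(t_{i-1}+\bar s_i)<0$. All of that matches.

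There is, however, a genuine quantitative gap in your treatment of the martingale head. You bound $\E\bigl[\sup_s\sum_{j\le J}|M_j^N(s)|\bigr]\le C'J/\sqrt N=O(N^{-1/4})$ and then invoke Markov's inequality. But Markov applied to a first moment of order $N^{-1/4}$ at threshold $DN^{-1/4}$ yields a failure probability of order $1/D$, a constant independent of $N$; you cannot simultaneously obtain a deviation of order $N^{-1/4}$ \emph{and} a failure probability of order $N^{-1/4}$ from a first-moment bound of order $N^{-1/4}$ (their product would have to be at most the first moment, i.e.\ $N^{-1/2}\le N^{-1/4}$ is needed, which fails). Even the sharper first-moment bound $\E[\sup_s\|p_J(M^N(s))\|_1]\le\sqrt J\,(\E[\sup_s\|M^N(s)\|_2^2])^{1/2}=O(N^{-3/8})$ only gives failure probability $O(N^{-1/8})$ at threshold $N^{-1/4}$. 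The paper's route avoids this: it bounds the \emph{second} moment $\E\bigl[\sup_s\|M_s^N\|_2^2\bigr]\le C_3(i)/N$ via Doob and the quadratic variation \eqref{QN}--\eqref{eq2vq}, applies Chebyshev at level $N^{-3/4}$ to get $\P(\sup_s\|M_s^N\|_2>N^{-3/8})\le C_3(i)N^{-1/4}$, and only then converts to $\ell^1$ deterministically on the good event via $\|p_d(y)\|_1\le\sqrt d\,\|y\|_2$ with $d\sim N^{1/4}$, giving $\sqrt d\,N^{-3/8}=O(N^{-1/4})$. The quadratic scaling of Chebyshev is precisely what reconciles the two rates; your argument needs to be restructured along these lines (keep the martingale in $\ell^2$ until after the probability estimate). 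The rest of your proof, including the tail estimate by mass conservation and the contradiction argument for $\{\sigma_i^N>\bar s_i\}$, is sound.
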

%%%%%%%%%%%%%%%%%%%%%%%%%%%%%%%%%%%%%%%%%%%%%%%%%%%%%%%%%%%%%%%%%%%%%%%%%%%%%%%%%%

\noindent\textbf{Proof.} Fix $i\ge 1$ and set $\tilde{x}:=x^{(i)}$ to simplify the notation. Recall that $x(t)=x^{(i)}(t)$ for $t\in [t_{i-1},t_i]$. By Section~\ref{secdescrip}, we have for $0\le s\le \sigma_i^N $,
\begin{eqnarray*}
       \tilde{x}(t_{i-1}+s) & = & x(t_{i-1}) + \int_0^s b^{(i)} (\tilde{x}(t_{i-1}+t)) \ dt, \\
       \tilde{X}^N(T^N_{i-1}+s) & = & \tilde{X}^N(T_{i-1}) + \int_0^s \tilde{\beta} ( \tilde{X}^N (T^N_{i-1}+t)) \ dt + M_s^N,
\end{eqnarray*}
where $(M_s^N)_{s\geq 0}$ is a $\F_s^{(i)}$-martingale, $\F_s^{(i)}:=\sigma\left( X_{T^N_{i-1}+t} \ : \ t\in [0,s] \right)$, and $\tilde{\beta}$ is the drift of the process $\tilde{X}^N$ defined in \eqref{eq2}.  Subtracting the above two identities, we obtain
\begin{eqnarray} \label{eq3}
 & & \tilde{X}^N(T^N_{i-1}+s)-\tilde{x}(t_{i-1}+s) \\
 &=& \tilde{X}^N(T^N_{i-1})-x(t_{i-1}) + \int_0^s \left[ \tilde{\beta}( \tilde{X}^N (T^N_{i-1}+t)) -b^{(i)} (\tilde{X}^N(T^N_{i-1}+t)) \right]\ dt \nonumber\\
 & + &  \int_0^s \left[ b^{(i)} ( \tilde{X}^N (T^N_{i-1}+t)) -b^{(i)} (\tilde{x}(t_{i-1}+t)) \right]\ dt + M_s^N.\nonumber
\end{eqnarray}
We now aim at using the representation formula \eqref{eq3} to estimate $\|\tilde{X}^N(T^N_{i-1}+s)-\tilde{x}(t_{i-1}+s)\|_1$ for $s\in [0,\sigma_i^N]$. This requires in particular to estimate the martingale term $M_s^N$ in $\ell^1$. However, a classical way to estimate $M_s^N$ is to use the Doob inequality which gives an $L^2$-bound not suitable for our purposes. To remedy this difficulty, we only use \eqref{eq3} for the first $d$ components of $\tilde{X}^N(T^N_{i-1}+s)-\tilde{x}(t_{i-1}+s)$, the integer $d$ being suitably chosen, and control the tail of the series by the first moment. More precisely, given $d\ge 1$, we introduce the projections $p_d$ and $q_d$ defined in $\ell^1$ by $p_d(y):=(y_1,\ldots,y_d,0,\ldots)$ and $q_d(y)=y-p_d(y)$, $y\in \ell^1$. Clearly,
\begin{equation}\label{z1}
\|p_d(y)\|_1 \le \sqrt{d}\ \|p_d(y)\|_2\,, \qquad y\in \ell^1\,,
\end{equation}
and
\begin{equation}\label{z2}
\|q_d(y)\|_1 \le \frac{\|y\|_{1,1}}{d}\,, \qquad y\in \Xun\,.
\end{equation}
Owing to \eqref{z2} and the boundedness of the first moment of $\tilde{X}^N$ and $\tilde{x}$ (see \eqref{ielosubmarine}, \eqref{c7}, and Lemma~\ref{leb2}), we have for $s\in [0,\sigma_i^N]$
\begin{eqnarray}
& & \|\tilde{X}^N(T^N_{i-1}+s)-\tilde{x}(t_{i-1}+s)\|_1 \nonumber\\
& \le & \|p_d\left( \tilde{X}^N(T^N_{i-1}+s)-\tilde{x}(t_{i-1}+s) \right)\|_1 + \|q_d\left( \tilde{X}^N(T^N_{i-1}+s) \right)\|_1 + \|q_d(\tilde{x}(t_{i-1}+s))\|_1 \nonumber\\
& \le & \|p_d\left( \tilde{X}^N(T^N_{i-1}+s)-\tilde{x}(t_{i-1}+s) \right)\|_1 + \frac{\|\tilde{X}^N(T^N_{i-1}+s)\|_{1,1}}{d} + \frac{\|\tilde{x}(t_{i-1}+s)\|_{1,1}}{d} \nonumber \\
& \le & \|p_d\left( \tilde{X}^N(T^N_{i-1}+s)-\tilde{x}(t_{i-1}+s) \right)\|_1 + \frac{\left( 1 + \|x(t_{i-1})\|_{1,1}\ e^{4\kappa_i s} \right)}{d}  \nonumber \\
& \le & \|p_d\left( \tilde{X}^N(T^N_{i-1}+s)-\tilde{x}(t_{i-1}+s) \right)\|_1 + \frac{\left( 1 + \|x_0\|_{1,1}\ e^{4\kappa_i s} \right)}{d}\,.\label{z3}
\end{eqnarray}

Since $\tilde{\beta}_j -b_j^{(i)}=0$ for all $j\ge 1$ except for $j\in\{i,2i\}$ for which $\tilde{\beta}_i -b_i^{(i)}=2K(i,i)/N$ and $\tilde{\beta}_{2i} -b_{2i}^{(i)}=-K(i,i)/N$ we have
\begin{equation}
\label{z4}
\| \tilde{\beta}(y) -b^{(i)}(y)  ||_1\leq \frac{3K(i,i)}{N} \le \frac{3\kappa_i}{N}\,, \qquad y\in\Xun\,,
\end{equation}
 by \eqref{cubitus}.
 Observing next that $b^{(i)}$ is Lipschitz continuous in $\ell^1$ with Lipschitz constant $3\kappa_i$, we infer from \eqref{eq3}, \eqref{z1}, and \eqref{z4} that
\begin{eqnarray*}
\|p_d \left( \tilde{X}^N  (T_{i-1}^N+s) - \tilde{x}(t_{i-1}+s) \right) \|_1  & \leq & \|p_d\left( \tilde{X}^N(T_{i-1}) - \tilde{x}(t_{i-1}) \right) \|_1 + \frac{3\kappa_i s}{N} \\
 & + &   3\kappa_i \int_0^s   \| \tilde{X}^N (T_{i-1}^N+t) - \tilde{x}(t_{i-1}+t) ||_1\ dt + \sqrt{d}\ \| p_d(M_s^N) \|_2.
\end{eqnarray*}
Combining the above inequality with \eqref{z3} gives
\begin{eqnarray}
\|\tilde{X}^N  (T_{i-1}^N+s) - \tilde{x}(t_{i-1}+s)\|_1  & \leq & \| \tilde{X}^N(T_{i-1}) - \tilde{x}(t_{i-1}) \|_1 + \frac{3\kappa_i s}{N} + \frac{\left( 1 + \|x_0\|_{1,1}\ e^{4\kappa_i s} \right)}{d} \label{z5}\\
 & + &   3\kappa_i \int_0^s   \| \tilde{X}^N (T_{i-1}^N+t) - \tilde{x}(t_{i-1}+t) ||_1\ dt + \sqrt{d}\ \| M_s^N \|_2\,. \nonumber
\end{eqnarray}

At this point, we fix $\bar{s}_i\in (s_i,s_i+1)$ and $\eta_i>0$ such that $\tilde{x}_i(t_{i-1}+\bar{s}_i)<0$ and $d\tilde{x}_i/dt(t_{i-1}+s) < - \eta_i$ for $s\in [0,\bar{s}_i]$ (such a pair $(\bar{s}_i,\eta_i)$ exists as $\tilde{x}_i(t_i)=x_i(t_{i-1}+s_i)=0$ and $d\tilde{x}_i/dt<0$ in $[t_{i-1},t_i]$ by \eqref{b23b}). Let $\gamma>0$ and introduce
$$
\Omega_i' := \left\{ \sup_{s\in [0,\bar{s}_i \wedge \sigma_i^N]} \|M_s^N\|_2 \le \frac{1}{N^{3/8}} \right\}\,.
$$
Choosing an integer $d\in (N^{1/4}, 2 N^{1/4})$, we deduce from \eqref{z5} that, in $\Omega_{i,\gamma}\cap \Omega_i'$, we have for $s\in [0,\bar{s}_i \wedge\sigma_i^N]$
\begin{eqnarray*}
\|\tilde{X}^N(T_{i-1}^N+s) - \tilde{x}(t_{i-1}+s) \|_1  &\leq &  \frac{\gamma}{N^{1/4}}  + \frac{3\kappa_i s}{N} + \frac{\left( 1 + \|x_0\|_{1,1}\ e^{4\kappa_i s} \right)}{N^{1/4}} \\
 & +&   3\kappa_i\ \int_0^s   \| \tilde{X}^N (T_{i-1}^N+t) - \tilde{x}(t_{i-1}+t) ||_1\ dt + \frac{\sqrt{2}}{N^{1/4}} \\
 & \le & \frac{\gamma+C_2}{N^{1/4}}\ e^{4\kappa_i s} + 3\kappa_i\ \int_0^s   \| \tilde{X}^N (T_{i-1}^N+t) - \tilde{x}(t_{i-1}+t) ||_1\ dt
\end{eqnarray*}
for some positive constant $C_2$. After integration, we end up with
\begin{equation}
\label{z6}
\sup_{s\in [0,\bar{s}_i \wedge \sigma_i^N]} \|\tilde{X}^N(T_{i-1}^N+s) - \tilde{x}(t_{i-1}+s) \|_1 \le 5\ \frac{\gamma+C_2}{N^{1/4}}\ \ e^{4\kappa_i \bar{s}_i} \le 5\ \frac{\gamma+C_2}{N^{1/4}}\ \ e^{4\kappa_i (1+s_i)}\,.
\end{equation}
In particular, in $\{\sigma_i^N>\bar{s}_i\}\cap\Omega_{i,\gamma}\cap\Omega_i'$, we have
$$
0 \le \tilde{X}^N_i(T_{i-1}^N+\bar{s}_i) \le \tilde{x}_i(t_{i-1}+\bar{s}_i) + 5\ \frac{\gamma+C_2}{N^{1/4}}\ \ e^{4\kappa_i (1+s_i)} < 0
$$
for $N$ large enough. Consequently, there is $N_1(\gamma,i)$ such that
$$
\Omega_{i,\gamma}\cap\Omega_i' \subset \{\sigma_i^N \le \bar{s}_i \} \;\;\;\mbox{ for }\;\;\; N\ge N_1(\gamma,i)\,.
$$
Recalling \eqref{z6}, we have thus established that, for $N\ge N_1(\gamma,i)$,
\begin{eqnarray}  \P\left(\sup_{s\in [0,\bar{s}_i\wedge\sigma_i^N]} \|\tilde{X}^N (T_{i-1}^N+s)- \tilde{x}(t_{i-1}+s) ||_1  \geq
\frac{C_1(\gamma,i)}{N^{1/4}}\right)& \leq &  \P\left((\Omega_{i,\gamma}\cap\Omega_i')^c\right) \label{z7} \\
& \leq &\P(\Omega_{i,\gamma}^c) + \P\left(\Omega_i'^c\right)\,, \nonumber
\end{eqnarray}
and
\begin{equation}
\label{z8}
\P\left( \sigma_i^N > \bar{s}_i \right) \le \P\left((\Omega_{i,\gamma}\cap\Omega_i')^c\right) \leq \P(\Omega_{i,\gamma}^c) + \P\left(\Omega_i'^c\right)\,,
\end{equation}
with $C_1(\gamma,i) := 5 (\gamma+C_2) e^{4\kappa_i (1+s_i)}$.

To complete the proof, it remains to bound $\P(\Omega_i'^c)$. By the Doob inequality, we have:
$$
\E\left(\sup_{s \in [0,\bar{s}_i \wedge \sigma_i^N]}   \|M_s^N\|_2^2\right) \leq 4  \ \E\left( \|M_{\bar{s}_i \wedge \sigma_i^N}^N\|_2^2\right) \leq 4 \  \E\left( \int_0^{\bar{s}_i \wedge \sigma_i^N}   \tilde{\alpha} \left( \tilde{X}^N(T_{i-1}^N +t) \right)  \ dt \right),
$$
where $\tilde{\alpha}$ is defined by (\ref{defalpha}). According to Section~\ref{secdescrip} and \eqref{cubitus},  it is easy to show that, if $y\in \Xuni$, we have $\tilde{\alpha}(y)\le 5\kappa_i\ \|y\|_1/N$. Since $X^N(s)\in \Xuni$ for $s\in [T_{i-1}^N,T_i^N]$ and $\bar{s}_i<s_i+1$, we conclude that
$$
\E\left(\sup_{s \in [0,\bar{s}_i \wedge \sigma_i^N]}   ||M_s^N||_2^2\right)  \leq \frac{C_3(i)}{N}\,.
$$
Therefore, observing that
$$
\P\left(\Omega_i'^c\right) =\P\left( \sup_{s\in [0,\bar{s}_i \wedge \sigma_i^N]}  || M_s^N||_2^2 > \frac{1}{N^{3/4}} \right),$$
the Markov inequality yields
$$
\P\left(\Omega_i'^c\right)  \leq N^{3/4}\ \E\left(\sup_{s\in [0,\bar{s}_i \wedge \sigma_i^N]}  || M_s^N||_2^2 \right) \leq \frac{C_3(i)}{N^{1/4}}\,.
$$
Proposition~\ref{prop1} then readily follows from \eqref{z7}, \eqref{z8}, and the above bound with $C_1(i)':=C_3(i)$. \qed

\medskip

%%%%%%%%%%%%%%%%%%%%%%%%%%%%%%%%%%%%%%%%%%%%%%%%%%%%%%%%%%%%%%%%%%%%%%%%%%%%%%%%%%
\begin{prop}\label{prop2}
For all $i\geq 1$, there exist positive constants $a_i$, $b_i$, and an integer $N_2(i)$ such that
\begin{equation}
\label{z9}
\P\left( \|\tilde{X}^N(T_{i-1}^N) -x(t_{i-1})\|_1 > \frac{b_i}{N^{1/4}} \right)\leq  \frac{a_i}{N^{1/4}} \ \text{ for all } N\geq N_2(i)\,.
\end{equation}
\end{prop}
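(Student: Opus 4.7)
The argument will proceed by induction on $i\ge 1$, relying crucially on Proposition~\ref{prop1}. The base case $i=1$ is immediate, since $T_0^N=0$ and $t_0=0$ give $\tilde{X}^N(T_0^N)-x(t_0)=X_0^N/N-x_0$, so \eqref{hyp1} yields \eqref{z9} with $a_1=b_1=1$ and $N_2(1)=1$.

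For the inductive step, assume \eqref{z9} holds at rank $i$ with constants $a_i,b_i,N_2(i)$. I apply Proposition~\ref{prop1} with $\gamma:=b_i$, obtaining parameters $\bar{s}_i\in(s_i,s_i+1)$, $\eta_i>0$, $C_1(b_i,i)$, $C_1(i)'$, and $N_1(b_i,i)$. Define the good event
$$
\mathcal{G}_i^N:=\left\{\sigma_i^N\le \bar{s}_i\right\}\cap\left\{\sup_{0\le s\le \sigma_i^N}\left\|\tilde{X}^N(T_{i-1}^N+s)-x^{(i)}(t_{i-1}+s)\right\|_1\le \frac{C_1(b_i,i)}{N^{1/4}}\right\}.
$$
A union bound combining the two assertions of Proposition~\ref{prop1} with the inductive bound $\P(\Omega_{i,b_i}^c)\le a_i/N^{1/4}$ yields $\P((\mathcal{G}_i^N)^c)\le 2(C_1(i)'+a_i)/N^{1/4}$ as soon as $N\ge\max\{N_1(b_i,i),N_2(i)\}$.

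On $\mathcal{G}_i^N$, since $x(t_i)=x^{(i)}(t_{i-1}+s_i)$, the triangle inequality gives
\bean
\left\|\tilde{X}^N(T_i^N)-x(t_i)\right\|_1 &\le& \frac{C_1(b_i,i)}{N^{1/4}}+\left\|x^{(i)}(t_{i-1}+\sigma_i^N)-x^{(i)}(t_{i-1}+s_i)\right\|_1.
\eean
To handle the remaining term, I exploit $\tilde{X}_i^N(T_{i-1}^N+\sigma_i^N)=0=x_i^{(i)}(t_{i-1}+s_i)$ (by the very definitions of $\sigma_i^N$ and $s_i$) together with the transversality estimate $dx_i^{(i)}/dt\le -\eta_i$ on $[0,\bar{s}_i]$ supplied by \eqref{panoramix}; since both $\sigma_i^N$ and $s_i$ lie in $[0,\bar{s}_i]$ on $\mathcal{G}_i^N$, the mean value theorem gives
$$
\eta_i|\sigma_i^N-s_i|\le \left|x_i^{(i)}(t_{i-1}+\sigma_i^N)-x_i^{(i)}(t_{i-1}+s_i)\right|=\left|x_i^{(i)}(t_{i-1}+\sigma_i^N)-\tilde{X}_i^N(T_{i-1}^N+\sigma_i^N)\right|\le \frac{C_1(b_i,i)}{N^{1/4}}.
$$
Since $\|dx^{(i)}/dt\|_1$ is bounded by some $M_i>0$ on the compact interval $[t_{i-1},t_{i-1}+\bar{s}_i]$ (by an estimate along the lines of \eqref{senechal} using \eqref{cubitus} and the bound \eqref{b10} on $\|x^{(i)}\|_{1,1}$), I conclude $\|x^{(i)}(t_{i-1}+\sigma_i^N)-x^{(i)}(t_{i-1}+s_i)\|_1\le M_i\,C_1(b_i,i)/(\eta_i\,N^{1/4})$. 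Setting $b_{i+1}:=C_1(b_i,i)(1+M_i/\eta_i)$, $a_{i+1}:=2(C_1(i)'+a_i)$, and $N_2(i+1):=\max\{N_1(b_i,i),N_2(i)\}$ closes the induction.

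The only delicate point is the passage from the random stopping time $T_{i-1}^N+\sigma_i^N$ (where $\tilde{X}_i^N$ actually vanishes) to the deterministic time $t_i$ (where $x_i$ vanishes). This is resolved by the transversality $dx_i^{(i)}/dt(t_i)<0$ guaranteed by \eqref{b23b} and quantified through \eqref{panoramix}, which forces these two times to be within $O(N^{-1/4})$ of one another on the good event.
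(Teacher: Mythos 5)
Your proposal is correct and follows essentially the same route as the paper: induction on $i$, Proposition~\ref{prop1} applied with $\gamma=b_i$, the transversality bound \eqref{panoramix} converting $\tilde{X}_i^N(T_i^N)=0=x_i^{(i)}(t_i)$ into $|\sigma_i^N-s_i|=O(N^{-1/4})$, and the Lipschitz bound on $x^{(i)}$ to control the time-shift error. The only difference is organisational (you work on a single good event, while the paper chains set inclusions and union bounds), which does not affect the substance.
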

%%%%%%%%%%%%%%%%%%%%%%%%%%%%%%%%%%%%%%%%%%%%%%%%%%%%%%%%%%%%%%%%%%%%%%%%%%%%%%%%%%

\noindent\textbf{Proof.}
We argue by induction on $i\ge 1$ and first note that \eqref{z9} holds true for $i=1$ with $a_1=b_1=1$ by \eqref{hyp1}. \\
Assume next that \eqref{z9} holds true for some $i\ge 1$. Setting $\tilde{x}:=x^{(i)}$, the function $x^{(i)}$ being defined in Proposition~\ref{prop1}, we have
\begin{equation}
\label{z10}
\|\tilde{X}^N(T_{i}^N) -x(t_{i})\|_1\leq \|\tilde{X}^N(T_{i}^N) -\tilde{x}(t_{i-1}+ \sigma_i^N  )|
\|_1 + \| \tilde{x}(t_{i-1}+\sigma_i^N)-\tilde{x}(t_{i})\|_1\,.
\end{equation}
On the one hand, it follows from \eqref{z9} for $i$ and Proposition~\ref{prop1} with $\gamma=b_i$ that we have
\begin{eqnarray}
\P\left( \|\tilde{X}^N(T_{i}^N) -\tilde{x}(t_{i-1}+ \sigma_i^N  )\|_1 > \frac{C_1(b_i,i)}{N^{1/4}} \right) & \leq & \frac{C_1(i)'}{N^{1/4}} + \P\left( \|\tilde{X}^N(T_{i-1}^N) -\tilde{x}(t_{i-1})\|_1 > \frac{b_i}{N^{1/4}}\right) \nonumber\\
& \le & \frac{C_1(i)'+a_i}{N^{1/4}} \label{z11}
\end{eqnarray}
and
\begin{equation}
\label{z11b}
\P( \sigma_i^N > \bar{s}_i ) \le \frac{C_1(i)'+a_i}{N^{1/4}}
\end{equation}
for $N\ge N_1(b_i,i)+N_2(i)$, the constant $\bar{s}_i$ being defined in \eqref{panoramix}.

On the other hand, if $|\sigma_i^N-s_i|>C_1(b_i,i)/(\eta_i N^{1/4})$, we have either $\sigma_i^N>\bar{s}_i$ or $\sigma_i^N \le \bar{s}_i$ and we deduce from \eqref{panoramix} that
$$
|\tilde{x}_i(t_{i-1}+\sigma_i^N)| = |\tilde{x}_i(t_{i-1}+\sigma_i^N)-\tilde{x}_i(t_{i-1}+s_i)| = \left| \int_{\sigma_i^N}^{s_i} \frac{d\tilde{x}_i}{dt}(t)\ dt \right| \ge \eta_i\ \left| \sigma_i^N - s_i \right| > \frac{C_1(b_i,i)}{N^{1/4}}\,,
$$
so that
$$
\left\{ |\sigma_i^N-s_i| > \frac{C_1(b_i,i)}{\eta_i N^{1/4}}\right\} \subset 	\left\{\sigma_i^N>\bar{s}_i \right\}\cup\left\{ |\tilde{X}^N_i(T_i^N) - \tilde{x}_i(t_{i-1} +\sigma_i^N)| >  \frac{C_1(b_i,i)}{N^{1/4}}  \right \}
$$
since $\tilde{X}_i^N(T_i^N)=0$. We then infer from \eqref{z11}, \eqref{z11b}, and the above inclusion that, for $N\ge N_1(b_i,i)+N_2(i)$,
\begin{equation}
\label{z12}
\P\left(  |\sigma_i^N-s_i| > \frac{C_1(b_i,i)}{\eta_i N^{1/4}}  \right) \leq 2\ \frac{(C_1(i)'+a_i)}{N^{1/4}}\,.
\end{equation}
This estimate now allows us to handle the second term in the right-hand side of \eqref{z10}. Indeed, by Proposition~\ref{prb1}, if $\sigma_i^N\le\bar{s}_i$,
$$
\| \tilde{x}(t_{i-1}+\sigma_i^N)-\tilde{x}(t_{i}) \|_1 \le |\sigma_i^N-s_i| \ \sup_{t\in [t_{i-1},t_{i-1}+\bar{s}_i]} \left\| \frac{d\tilde{x}}{dt}(t) \right\|_1 \le C_4(i)\ |\sigma_i^N-s_i|\,,
$$
and it follows from \eqref{z11b} and \eqref{z12} that, for $N\ge N_1(b_i,i)+N_2(i)$,
\begin{eqnarray}
 \P\left(  \| \tilde{x}(t_{i-1}+\sigma_i^N)-\tilde{x}(t_{i})\|_1 > \frac{C_1(b_i,i)\ C_4(i)}{\eta_i N^{1/4}}  \right) & \leq & \P\left( \sigma_i^N > \bar{s}_i\right) + \P\left( |\sigma_i^N-s_i| > \frac{C_1(b_i,i)}{\eta_i N^{1/4}}  \right) \nonumber\\
& \le & 3\ \frac{(C_1(i)'+a_i)}{N^{1/4}} \,. \label{z13}
\end{eqnarray}
Setting
\begin{equation}
\label{z14}
a_{i+1}:=4\ (a_i + C_1'(i))\,, \quad b_{i+1}:=2\ \frac{(1+C_4(i))\ C_1(b_i,i)}{\eta_i}\,, \quad N_2(i+1):=N_1(b_i,i)+N_2(i)\,,
\end{equation}
we infer from \eqref{z10}, \eqref{z11}, and \eqref{z13} that, for $N\ge N_2(i+1)$,
\begin{eqnarray*}
\P\left( \| \tilde{X}^N(T_i^N) - x(t_i) \|_1 > \frac{b_{i+1}}{N^{1/4}} \right) & \le & \P\left(  \| \tilde{X}^N(T_i^N)-\tilde{x}(t_{i-1}+\sigma_i^N)\|_1 > \frac{C_1(b_i,i)}{N^{1/4}}  \right) \\
& + & \P\left(  \| \tilde{x}(t_{i-1}+\sigma_i^N)-\tilde{x}(t_{i})\|_1 > \frac{C_1(b_i,i)\ C_4(i)}{\eta_i N^{1/4}}  \right) \\
& \le &  \frac{a_{i+1}}{N^{1/4}}\,,
\end{eqnarray*}
which completes the proof. \qed

\medskip

%%%%%%%%%%%%%%%%%%%%%%%%%%%%%%%%%%%%%%%%%%%%%%%%%%%%%%%%%%%%%%%%%%%%%%%%%%%%%%%%%%
\begin{cor}\label{cor1}
For all $i\ge 1$, there are positive constants $A_i$, $B_i$, and an integer $N_3(i)$ such that
$$
\P\left(  |T_i^N-t_i| > \frac{B_i}{N^{1/4}} \right)\leq \frac{A_i}{N^{1/4}}\;\;\;\mbox{ for }\;\;\; N\ge N_3(i)\,.
$$
\end{cor}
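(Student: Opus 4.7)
The plan is to observe that Corollary~\ref{cor1} is essentially an immediate consequence of the estimates that already appear inside the inductive proof of Proposition~\ref{prop2}, together with the decomposition of $T_i^N$ and $t_i$ into increments. Concretely, I would write
\[
T_i^N - t_i = \sum_{k=1}^i \bigl( \sigma_k^N - s_k \bigr),
\]
so that controlling $|T_i^N-t_i|$ reduces to controlling each increment $|\sigma_k^N - s_k|$ and taking a union bound.

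For each $k\ge 1$, estimate \eqref{z12} in the proof of Proposition~\ref{prop2} already provides exactly the bound we need, namely
\[
\P\!\left( |\sigma_k^N - s_k| > \frac{C_1(b_k,k)}{\eta_k\, N^{1/4}} \right) \le \frac{2\,(C_1(k)'+a_k)}{N^{1/4}}
\]
for all $N \ge N_1(b_k,k)+N_2(k)$. Thus on the intersection of the complements of these events for $k=1,\ldots,i$, the triangle inequality gives
\[
|T_i^N - t_i| \le \sum_{k=1}^i |\sigma_k^N - s_k| \le \frac{1}{N^{1/4}}\,\sum_{k=1}^i \frac{C_1(b_k,k)}{\eta_k}\,,
\]
while a union bound controls the probability of the complementary event.

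The resulting choice of constants is
\[
B_i := \sum_{k=1}^i \frac{C_1(b_k,k)}{\eta_k}\,, \qquad A_i := 2\sum_{k=1}^i \bigl(C_1(k)'+a_k\bigr)\,, \qquad N_3(i) := \max_{1\le k\le i}\bigl( N_1(b_k,k) + N_2(k) \bigr)\,,
\]
and the announced estimate follows at once. There is no real obstacle here: all the analytic work (the martingale control, the extinction-time estimate, and the induction) has been carried out in Propositions~\ref{prop1} and \ref{prop2}, so the corollary is essentially a bookkeeping step combining \eqref{z12} with the telescoping identity above.
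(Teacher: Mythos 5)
Your proof is correct and follows essentially the same route as the paper: the paper likewise writes $T_i^N-t_i=\sum_{j=1}^i(\sigma_j^N-s_j)$, invokes the increment estimate \eqref{z12} (packaged via the constants $a_{j+1},b_{j+1}$ of \eqref{z14}), and concludes by a union bound. The only difference is cosmetic, in how the constants $A_i$, $B_i$ are named.
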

%%%%%%%%%%%%%%%%%%%%%%%%%%%%%%%%%%%%%%%%%%%%%%%%%%%%%%%%%%%%%%%%%%%%%%%%%%%%%%%%%%
\noindent\textbf{Proof.} Recalling \eqref{z12} and \eqref{z14}, we have
$$
\P\left(  |\sigma_i^N-s_i| > \frac{b_{i+1}}{N^{1/4}}  \right) \leq \frac{a_{i+1}}{N^{1/4}} \;\;\;\mbox{ for }\;\;\; N\ge N_2(i+1)
$$
and $i\ge 1$. Fix $i\ge 1$ and put
$$
N_3(i) := \max_{1\le j \le i} N_2(j+1)\,, \quad A_i := \sum_{j=1}^i a_{j+1}\,, \quad B_i:=\sum_{j=1}^i b_{j+1}\,.
$$
As
$$
T_i^N - t_i = \sum_{j=1}^i (\sigma_j^N - s_j)\,,
$$
we have
$$
\P\left( |T_i^N-t_i| > \frac{B_i}{N^{1/4}} \right)\leq \sum_{j=1}^i \P\left( |\sigma_j^N-s_j| > \frac{b_{j+1}}{N^{1/4}} \right)\leq \sum_{j=1}^i \frac{a_{j+1}}{N^{1/4}} = \frac{A_i}{N^{1/4}}
$$
as claimed. \qed

\medskip

We are now able to prove Proposition~\ref{prop synthese}.

\noindent\textbf{Proof of Proposition~\ref{prop synthese}.} For $I\ge 1$, consider
$$
\Lambda_I :=  \bigcap_{i=1}^I \left\{  \sup_{0\leq s\leq \sigma_i^N}    \| \tilde{X}^N(T_{i-1}^N+s)-x^{(i)}(t_{i-1}+s)  \|_1\leq \frac{C_1(b_i,i)}{N^{1/4}} \ \mbox{ and } \ |T^N_i-t_i|\leq \frac{B_i}{N^{1/4}}   \right\}\,,
$$
and
$$
N_4(i) := \max_{1\le i \le I} \max{\{ N_1(b_i,i) , N_2(i) , N_3(i) \}}\,.
$$
According to Proposition~\ref{prop1}, Proposition~\ref{prop2} and Corollary~\ref{cor1}, we have for $N\ge N_4(i)$
\begin{eqnarray*}
\P\left( \Lambda_I^c \right) & \le & \sum_{i=1}^I \P\left( \sup_{s\in [0,\sigma_i^N]}    \| \tilde{X}^N(T_{i-1}^N+s)-x^{(i)}(t_{i-1}+s)  \|_1 > \frac{C_1(b_i,i)}{N^{1/4}} \right) + \sum_{i=1}^I \P\left( |T^N_i-t_i|> \frac{B_i}{N^{1/4}} \right) \\
& \le & \sum_{i=1}^I \left( \P\left( \| \tilde{X}^N(T_{i-1}^N)-x^{(i)}(t_{i-1})  \|_1 > \frac{b_i}{N^{1/4}} \right) + \frac{C_1(i)'}{N^{1/4}} \right) + \sum_{i=1}^I \frac{A_i}{N^{1/4}} \\
& \le & \sum_{i=1}^I \frac{a_i+C_1(i)'+A_i}{N^{1/4}}
\end{eqnarray*}
\begin{equation} \label{z15}
 \P(\Lambda_I^c)\leq \frac{C_5(I)}{N^{1/4}} .
 \end{equation}
Consider now $t\geq 0$. In $\Lambda_I\cap \{ T_I^N \ge t \}$, there are $i\in\{1,\ldots, I-1\}$, and $s\in [0,\sigma_i^N)$ such that $t=T_{i-1}^N+s$ and
\begin{eqnarray}
T_{i-1}^N + s & \le & T_{i-1}^N + \sigma_i^N = T_i^N - t_i + t_i \le t-I + \frac{B_i}{N^{1/4}} \le \vartheta_I:= \min{\left\{ 1+t_I , \frac{t_I+t_\infty}{2} \right\}}\,, \\
t_{i-1}+s & \le & t_{i-1}+\sigma_i^N = t_{i-1}-T_{i-1}^N+T_i^N-t_i+t_i \le t_I + \frac{2B_i}{N^{1/4}} \le \vartheta_I \label{z15b}
\end{eqnarray}
for $N\ge N_5(I)$ large enough. Consequently, recalling that $x^{(i)}$ is defined in Proposition~\ref{prop1}, it follows from \eqref{senechal} that, in $\Lambda_I\cap \{ T_I^N \ge t \}$
\begin{eqnarray}
\| \tilde{X}^N(t)-x(t)  \|_1 &\leq & \| \tilde{X}^N(T_{i-1}^N+s)-x^{(i)}(t_{i-1}+s)  \|_1+\| x^{(i)}(t_{i-1}+s)-x(t_{i-1}+s)  \|_1 \nonumber\\
& &   +\| x(t_{i-1}+s)-x(T_{i-1}^N+s)  \|_1 \nonumber\\
&\leq&    \frac{C_1(b_i,i)}{N^{1/4}}  + \| x^{(i)}(t_{i-1}+s)-x(t_{i-1}+s)  \|_1 + \ |T_{i-1}^N-t_{i-1}| \ \sup_{t\in [0,\vartheta_I]} \left\| \frac{dx}{dt}(t) \right\|_1 \nonumber \\
& \le & \frac{C_6(I)}{N^{1/4}} + \| x^{(i)}(t_{i-1}+s)-x(t_{i-1}+s)  \|_1 \label{z16}
\end{eqnarray}
for $N\ge N_5(I)$.

Now, since $0\le s< \sigma_i^N$ in $\Lambda_I\cap \{ T_I^N \ge t \}$, we have the following alternative:
\begin{itemize}
\item[(a)] either $s\le s_i$ and $x^{(i)}(t_{i-1}+s)=x(t_{i-1}+s)$,
\item[(b)] or $s_i< s < \sigma_i^N$ and, for $N\ge N_5(I)$, we infer from Proposition~\ref{prb1}, \eqref{senechal}, \eqref{z15b},  and the identity $x^{(i)}(t_i)=x(t_i)$ that
\begin{eqnarray*}
\| x^{(i)}(t_{i-1}+s)-x(t_{i-1}+s)  \|_1 & \le & \| x^{(i)}(t_{i-1}+s)-x^{(i)}(t_i)  \|_1 + \| x(t_i)-x(t_{i-1}+s)  \|_1 \\
& \le & |s-s_i|\ \left( \sup_{t\in [0,\vartheta_I]} \left\| \frac{dx^{(i)}}{dt}(t) \right\|_1  + \sup_{t\in [0,\vartheta_I]} \left\| \frac{dx}{dt}(t) \right\|_1  \right) \\
& \le & C_7(I)\ | \sigma_i^N-s_i| \\
& \le & C_7(I)\ \left( | T_i^N-t_i| + | T_{i-1}^N-t_{i-1}| \right) \\
& \le  & \frac{C_8(I)}{N^{1/4}}\,.
\end{eqnarray*}
\end{itemize}

Combining \eqref{z16} and the above analysis, we conclude that, in $\Lambda_I\cap \{ T_I^N \ge t \}$,
$$
\| \tilde{X}^N(t)-x(t) \|_1 \le \frac{C_9(I)}{N^{1/4}}
$$
for $N\ge N_5(I)$ and thus
$$
\Lambda_I \subset \left\{\sup_{0\leq t\leq T_I^N}    || \tilde{X}^N(t)-x(t)  ||_1\leq \frac{C_9(I)}{N^{1/4}}\right\}\,.
$$
Proposition~\ref{prop synthese} then follows from \eqref{z15} and the above set inclusion. \qed

\medskip

\noindent\textbf{Proof of Theorem~\ref{thc3}.}
Let $t\in (0,t_\infty)$. There exists $I\ge 1$ such that $t<t_I$. Clearly,
$$
\left\{\sup_{0\leq s\leq t}  \| \tilde{X}^N(s)-x(s) \|_1 > \frac{C_0(I)}{N^{1/4}}\right\}\subset \left\{\sup_{0\leq s\leq T_I^N}   \| \tilde{X}^N(s)-x(s)  \|_1 > \frac{C_0(I)}{N^{1/4}}\right\}\cup \left\{t_I>T_I^N\right\}\,,$$
the constant $C_0(I)$ being defined in Proposition~\ref{prop synthese}. Theorem~\ref{thc3} then follows from Proposition~\ref{prop synthese} and Corollary~\ref{cor1}. \qed

%%%%%%%%%%%%%%%%%%%%%%%%%%%%%%%%%%%%%%%%%%%%%%%%%%%%%%%%%%%%%%%%%%%%%%%%%%%%%%%%%%
%%%%%%%%%%%%%%%%%%%%%%%%%%%%%%%%%%%%%%%%%%%%%%%%%%%%%%%%%%%%%%%%%%%%%%%%%%%%%%%%%%
\section{Deterministic maximal existence time}\label{dmet}
%%%%%%%%%%%%%%%%%%%%%%%%%%%%%%%%%%%%%%%%%%%%%%%%%%%%%%%%%%%%%%%%%%%%%%%%%%%%%%%%%%
%%%%%%%%%%%%%%%%%%%%%%%%%%%%%%%%%%%%%%%%%%%%%%%%%%%%%%%%%%%%%%%%%%%%%%%%%%%%%%%%%%
\setcounter{thm}{0}
\setcounter{equation}{0}
%
%%%%%%%%%%%%%%%%%%%%%%%%%%%%%%%%%%%%%%%%%%%%%%%%%%%%%%%%%%%%%%%%%%%%%%%%%%%%%%%%%%
%%%%%%%%%%%%%%%%%%%%%%%%%%%%%%%%%%%%%%%%%%%%%%%%%%%%%%%%%%%%%%%%%%%%%%%%%%%%%%%%%%
\subsection{Global existence}\label{ge}
%%%%%%%%%%%%%%%%%%%%%%%%%%%%%%%%%%%%%%%%%%%%%%%%%%%%%%%%%%%%%%%%%%%%%%%%%%%%%%%%%%
%%%%%%%%%%%%%%%%%%%%%%%%%%%%%%%%%%%%%%%%%%%%%%%%%%%%%%%%%%%%%%%%%%%%%%%%%%%%%%%%%%

\noindent\textbf{Proof of Theorem~\ref{thc2}~(i).}
Recall that we assume that there exists $A_0>0$ such that  for all $i,j\geq 1,$
$$
K(i,j)\le \frac{\ln{(i+1)} \wedge \ln{(j+1)}}{4A_0}\,.
$$
For $t\in [0,t_\infty)$ and $i\ge 1$, we define
$$
\phi_i:=\frac{\ln{(i+1)}}{4A_0} \;\;\;\mbox{ and }\;\;\; M_0(t):= \sum_{j=1}^\infty x_j(t)\,.
$$
For $i\ge 1$ and $t\in (t_{i-1},t_i)$, we infer from the upper bound on $K$ and \eqref{b25} that
$$
0 = \frac{dM_0}{dt}(t) +\sum_{j=i}^\infty K(i,j)\ x_j(t) \le \frac{dM_0}{dt}(t) + \phi_i\ M_0(t)\,.
$$
Integrating with respect to time and using the time continuity of $x$ in $\Xun$ gives
$$
M_0(t_i)\ e^{\phi_i t_i} \ge M_0(t_{i-1})\ e^{\phi_i t_{i-1}} = M_0(t_{i-1})\ e^{\phi_{i-1} t_{i-1}}\ e^{(\phi_i - \phi_{i-1}) t_{i-1}}\,.
$$
Arguing by induction, we conclude that
$$
M_0(t_i)\ e^{\phi_i t_i} \ge M_0(0)\ \prod_{j=1}^{i-1} e^{(\phi_{j+1}-\phi_j) t_j}\,, \quad i\ge 2\,.
$$
By \eqref{c7} we have
$$
M_0(t_i) \le \frac{1}{i}\ \sum_{j=i}^\infty j\ x_j(t_i)  = \frac{1}{i}\,, \quad i\ge 2\,.
$$
Combining the above two estimates gives
\begin{eqnarray}
& & \frac{1}{i}\ e^{\phi_i t_i} \ge M_0(0)\ \prod_{j=1}^{i-1} e^{(\phi_{j+1}-\phi_j) t_j} \nonumber\\
& & \phi_i\ t_i \ge \ln{i} + \sum_{j=1}^{i-1} (\phi_{j+1}-\phi_j) t_j + \ln{(M_0(0))}\,, \quad i\ge 2 \nonumber\\
& & t_i \ge 4A_0\ \frac{\ln{i}}{\ln{(i+1)}} + \frac{1}{\ln{(i+1)}}\ \sum_{j=1}^{i-1} \ln{\left( \frac{j+2}{j+1} \right)}\ t_j + \frac{4A_0}{\ln{(i+1)}}\ \ln{(M_0(0))}\,. \label{percevan}
\end{eqnarray}
In particular, for $I\ge 2$ and $i>I$, we infer from \eqref{percevan} and the monotonicity of $(t_j)_{j\ge 1}$ that
\begin{eqnarray*}
t_i & \ge & 4A_0\ \frac{\ln{i}}{\ln{(i+1)}} + \frac{1}{\ln{(i+1)}}\ \sum_{j=I}^{i-1} \ln{\left( \frac{j+2}{j+1} \right)}\ t_I +  \frac{1}{\ln{(i+1)}}\ \sum_{j=1}^{I-1} \ln{\left( \frac{j+2}{j+1} \right)}\ t_1 \\
& + & \frac{4A_0}{\ln{(i+1)}}\ \ln{(M_0(0))} \\
& \ge & 4A_0\ \frac{\ln{i}}{\ln{(i+1)}} + \frac{\ln{(i+1)}-\ln{(I+1)}}{\ln{(i+1)}}\ t_I + \frac{\ln{(I+1)}-\ln{2}}{\ln{(i+1)}}\ t_1 + \frac{4A_0}{\ln{(i+1)}}\ \ln{(M_0(0))} \,.
\end{eqnarray*}

Assume now for contradiction that $t_\infty<\infty$. We may let $i\to\infty$ in the previous inequality to conclude that $t_\infty \ge 4A_0 + t_I$ for all $I\ge 2$. Letting $I\to\infty$ then implies that $t_\infty \ge 4A_0 + t_\infty$ and a contradiction. Therefore, $t_\infty=\infty$. \qed

%%%%%%%%%%%%%%%%%%%%%%%%%%%%%%%%%%%%%%%%%%%%%%%%%%%%%%%%%%%%%%%%%%%%%%%%%%%%%%%%%%
%%%%%%%%%%%%%%%%%%%%%%%%%%%%%%%%%%%%%%%%%%%%%%%%%%%%%%%%%%%%%%%%%%%%%%%%%%%%%%%%%%
\subsection{Finite time blow-up of the minimal size}\label{ftbums}
%%%%%%%%%%%%%%%%%%%%%%%%%%%%%%%%%%%%%%%%%%%%%%%%%%%%%%%%%%%%%%%%%%%%%%%%%%%%%%%%%%
%%%%%%%%%%%%%%%%%%%%%%%%%%%%%%%%%%%%%%%%%%%%%%%%%%%%%%%%%%%%%%%%%%%%%%%%%%%%%%%%%%

We actually establish a stronger version of the second assertion of Theorem~\ref{thc2}.

%%%%%%%%%%%%%%%%%%%%%%%%%%%%%%%%%%%%%%%%%%%%%%%%%%%%%%%%%%%%%%%%%%%%%%%%%%%%%%%%%%
\begin{prop}\label{pre2} Consider a coagulation kernel  $K$ and an initial condition $x_0$ satisfying \eqref{c1} and \eqref{c3}, respectively. Let $x$ be the corresponding solution to the min-driven coagulation equations given in Theorem~\ref{thc1} defined on $[0,t_\infty)$, $t_\infty$ being defined in \eqref{c4}. Assume further that there exist a non-decreasing sequence $(\phi_j)_{j\ge 1}$ of nonnegative real numbers, a non-increasing sequence $(\psi_j)_{j\ge 1}$ of nonnegative real numbers, and $\e>0$ such that
\beqn
\label{gaston}
K(i,j) \ge \phi_i \;\;\mbox{ and }\;\; \phi_i\ (\psi_i - \psi_{i+j})\ge \e \;\;\mbox{ for }\;\; j\ge i\ge 1\,.
\eeqn
Then $t_\infty<\infty$.
\end{prop}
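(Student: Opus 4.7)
The strategy is to build a Lyapunov functional out of $\psi$ whose decay rate is controlled by the quantitative hypothesis $\phi_i(\psi_i-\psi_{i+j})\geq\varepsilon$. Set $\psi_\infty:=\lim_{j\to\infty}\psi_j\in[0,\psi_1]$ (which exists by monotonicity) and note that letting $j\to\infty$ in the hypothesis forces $\phi_i(\psi_i-\psi_\infty)\geq\varepsilon$; in particular $\psi_1-\psi_\infty>0$ and $\phi_i\geq\phi_*:=\varepsilon/(\psi_1-\psi_\infty)>0$ uniformly in $i$. Define
$$S(t):=\sum_{j\geq 1}(\psi_j-\psi_\infty)\,x_j(t),$$
which is non-negative, continuous on $[0,t_\infty)$, and satisfies $S(t)\leq(\psi_1-\psi_\infty)M_0(t)\leq\psi_1-\psi_\infty$ by mass conservation.

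The core differential step is to apply the rate identity \eqref{b25} with $a_j=K(i,j)$ and $g_j=\psi_j-\psi_\infty$ on each subinterval $(t_{i-1},t_i)$ where $\ell\equiv i$, and decompose the resulting bracket as $\psi_{i+j}-\psi_i-\psi_j+\psi_\infty=-(\psi_i-\psi_{i+j})-(\psi_j-\psi_\infty)$, both factors being non-negative. Invoking $K(i,j)\geq\phi_i$ together with $\phi_i(\psi_i-\psi_{i+j})\geq\varepsilon$ for $j\geq i$, every summand contributes at most $[-\varepsilon-\phi_i(\psi_j-\psi_\infty)]x_j$; the diagonal $j=i$ term of \eqref{b25} (whose coefficient is $g_{2i}-2g_i=-(\psi_i-\psi_{2i})-(\psi_i-\psi_\infty)$) yields the analogous bound by applying the same hypothesis at $j=i$. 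Summation then gives the pointwise inequality
$$\frac{dS}{dt}\leq -\varepsilon N(t)-\phi_i S(t),\qquad N(t):=\sum_{j\geq 1}x_j(t).$$
Dropping the $-\varepsilon N$ contribution and integrating across the interval produces $S(t_i)\leq S(t_{i-1})e^{-\phi_i\tau_i}$, and telescoping combined with $\phi_i\geq\phi_*$ yields
$$\phi_*\sum_{i\geq 1}\tau_i\leq\sum_{i\geq 1}\phi_i\tau_i\leq\log\!\frac{S(0)}{S(t_\infty^-)}.$$
Hence $t_\infty<\infty$ would follow at once from the strict positivity $S(t_\infty^-)>0$.

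The main obstacle is precisely establishing this terminal positivity of $S$. The complementary inequality $dS/dt\leq-\varepsilon N$ furnishes the integrability $\int_0^{t_\infty}N(s)\,ds\leq S(0)/\varepsilon<\infty$, while integrating the per-interval estimate $-dx_i/dt\geq\phi_i(x_i+N)\geq\phi_i N$ gives $\int_{t_{i-1}}^{t_i}N\,ds\leq x_i(t_{i-1})/\phi_i\leq 1/(i\phi_i)$ (using $M_1\equiv 1$). To rule out $S(t_\infty^-)=0$ under the assumption $t_\infty=\infty$, I would exploit the positivity $x_{\ell(t)+1}(t)>0$ from \eqref{c6} together with the fact that on $(t_{i-1},t_i)$ with $i\geq 2$ the ODE for $x_{i+1}$ has no source (since $x_1\equiv 0$ there), so $x_{i+1}(t)=x_{i+1}(t_{i-1})\exp(-K(i,i+1)(t-t_{i-1}))$; iterating this exponential decay along with the uniform bound $\phi_i\geq\phi_*$ produces a lower bound on the contribution of the immediately super-minimal sizes to $S$, contradicting $S(t_\infty^-)=0$. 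The delicate point, which I expect to be the hardest part, is combining this tail control with the integrability of $N$ and the mass constraint to close the contradiction quantitatively.
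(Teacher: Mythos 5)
There is a genuine gap, and it lies exactly where you flag it: the terminal positivity $S(t_\infty^-)>0$ that your telescoping argument requires is in fact \emph{false}, so the endgame you sketch cannot be closed. Indeed, on $[t_{i-1},t_i)$ the solution is supported on sizes $j\ge i$, so
$$S(t)\;=\;\sum_{j\ge \ell(t)}(\psi_j-\psi_\infty)\,x_j(t)\;\le\;\bigl(\psi_{\ell(t)}-\psi_\infty\bigr)\,M_0(t)\;\le\;\psi_{\ell(t)}-\psi_\infty\,, $$
and since $\ell(t)\to\infty$ as $t\uparrow t_\infty$ (whether $t_\infty$ is finite or not) and $\psi_j\downarrow\psi_\infty$ by definition, one always has $S(t)\to 0$. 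Consequently $\log\bigl(S(0)/S(t_\infty^-)\bigr)=+\infty$ and the bound $\phi_*\,t_\infty\le\log\bigl(S(0)/S(t_\infty^-)\bigr)$ is vacuous; likewise the auxiliary inequality $\int_0^{t_\infty}N\,ds\le S(0)/\e$ gives nothing because $N=M_0\le 1/\ell(t)\to0$. Your plan to derive a contradiction from $S(t_\infty^-)=0$ under the assumption $t_\infty=\infty$ is therefore an attempt to prove a false statement.

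The differential computation itself is sound and is essentially the paper's: the paper also applies \eqref{b25} with the weight $\psi$ and uses the same decomposition of the bracket together with $K(i,j)\ge\phi_i$ and $\phi_i(\psi_i-\psi_{i+j})\ge\e$. The missing idea is a normalization: instead of $M_\psi-\psi_\infty M_0$ one must track the \emph{ratio} $M_\psi/M_0$ (equivalently $S/M_0$). On each interval $(t_{i-1},t_i)$ one has $M_\psi/M_0\le\psi_i$ (support on $j\ge i$), and the same bracket manipulation yields
$$\frac{d}{dt}\Bigl(\frac{M_\psi}{M_0}\Bigr)\;\le\;\frac{1}{M_0}\sum_{j\ge i}\phi_i\,(\psi_{i+j}-\psi_i)\,x_j\;\le\;-\e\,. $$
Since $M_\psi/M_0\ge 0$ on all of $[0,t_\infty)$, summing over the intervals gives directly $\e\,t_\infty\le M_\psi(0)/M_0(0)<\infty$. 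Dividing by $M_0$ is precisely what compensates for the decay of the total number of particles that kills your unnormalized functional; with that single modification your argument becomes the paper's proof, and all the machinery about exponential decay of $x_{i+1}$, integrability of $N$, and $\phi_i\ge\phi_*$ is unnecessary.
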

%%%%%%%%%%%%%%%%%%%%%%%%%%%%%%%%%%%%%%%%%%%%%%%%%%%%%%%%%%%%%%%%%%%%%%%%%%%%%%%%%%

\noindent\textbf{Proof.} For $t\in [0,t_\infty)$, define
$$
M_0(t):= \sum_{j=1}^\infty x_j(t) \;\;\mbox{ and }\;\; M_\psi(t):= \sum_{j=1}^\infty \psi_j\ x_j(t)\,.
$$
Given $i\ge 1$ and $t\in (t_{i-1},t_i)$, it follows from \eqref{c5a} and \eqref{b25} that
\bean
\frac{d}{dt} \left( \frac{M_\psi}{M_0} \right) & = & \frac{1}{M_0}\ \sum_{j=i}^\infty (\psi_{i+j}-\psi_i-\psi_j)\ K(i,j)\ x_j + \frac{M_\psi}{M_0^2}\ \sum_{j=i}^\infty K(i,j)\ x_j \\
& = & \frac{1}{M_0}\ \sum_{j=i}^\infty (\psi_{i+j}-\psi_j+\frac{M_\psi}{M_0}-\psi_i)\ K(i,j)\ x_j\,.
\eean
Owing to the monotonicity of $(\psi_j)_{j\ge 1}$, we have
$$
\psi_{i+j} \le \psi_j \;\;\mbox{and }\;\; \frac{M_\psi}{M_0}\le\psi_i\,, \quad j\ge i\,,
$$
so that \eqref{gaston} entails that
$$
(\psi_{i+j}-\psi_j+\frac{M_\psi}{M_0}-\psi_i)\ K(i,j) \le (\psi_{i+j}-\psi_j+\frac{M_\psi}{M_0}-\psi_i)\ \phi_i\,, \quad j\ge i\,.
$$
Consequently,
\bean
\frac{d}{dt} \left( \frac{M_\psi}{M_0} \right) & \le & \frac{\phi_i}{M_0}\ \sum_{j=i}^\infty (\psi_{i+j}-\psi_j+\frac{M_\psi}{M_0}-\psi_i)\ x_j \\
& \le & \frac{\phi_i}{M_0}\ \left( \sum_{j=i}^\infty \psi_{i+j}\ x_j - M_\psi + \frac{M_\psi}{M_0}\ M_0 -\psi_i\ M_0 \right) \\
& \le & \frac{1}{M_0}\ \sum_{j=i}^\infty \phi_i\ (\psi_{i+j}-\psi_i)\ x_j \\
& \le & -\e\,.
\eean
Consequently,
$$
\left( \frac{M_\psi}{M_0} \right)(t_i) + \e\ (t_i-t_{i-1}) \le \left( \frac{M_\psi}{M_0} \right)(t_{i-1})\,.
$$
Summing the above inequality with respect to $i$ gives
$$
\e\ t_\infty \le \lim_{i\to \infty} \left( \frac{M_\psi}{M_0} \right)(t_i) + \e\ t_\infty \le M_\psi(0)/M_0(0)<\infty
$$
and completes the proof. \qed

\medskip

Let us now give some examples of sequences $(\phi_j)_{j\ge 1}$ which fulfil \eqref{gaston}.
\begin{itemize}
\item if $\phi_j=j^\alpha$ for $j\ge 1$ and some $\alpha>0$, then \eqref{gaston} is fulfilled with $\psi_j=j^{-\alpha}$, $j\ge 1$, and $\e=(1-2^{-\alpha})$.
\item if $\phi_j=\left( \ln{(j+1)} \right)^{1+\alpha}$ for $j\ge 1$ and some $\alpha>0$, then \eqref{gaston} is fulfilled with $\psi_j=\left( \ln{(j+1)} \right)^{-\alpha}$, $j\ge 1$, and $\e=\alpha\ 2^{-1-\alpha}\ \ln{(3/2)}$.
\end{itemize}
In particular, Theorem~\ref{thc2}~(ii) follows by combining the second example above with Proposition~\ref{pre2}.

%%%%%%%%%%%%%%%%%%%%%%%%%%%%%%%%%%%%%%%%%%%%%%%%%%%%%%%%%%%%%%%%%%%%%%%%%%%%%%%%%%
%%%%%%%%%%%%%%%%%%%%%%%%%%%%%%%%%%%%%%%%%%%%%%%%%%%%%%%%%%%%%%%%%%%%%%%%%%%%%%%%%%
\section{Finite or infinite stochastic time of the last coalescence event}
%%%%%%%%%%%%%%%%%%%%%%%%%%%%%%%%%%%%%%%%%%%%%%%%%%%%%%%%%%%%%%%%%%%%%%%%%%%%%%%%%%
%%%%%%%%%%%%%%%%%%%%%%%%%%%%%%%%%%%%%%%%%%%%%%%%%%%%%%%%%%%%%%%%%%%%%%%%%%%%%%%%%%
\setcounter{thm}{0}
\setcounter{equation}{0}

In this section, we study the boundedness or unboundedness of the expectation of the last coalescence time $T^{X_0}$ defined in \eqref{te} with respect to the initial condition $X_0\in\ell^1_\NN$, the space $\ell_\NN^1$ being defined in \eqref{idefix}, when the coagulation kernel has the special structure \eqref{Kmin}, namely,
\begin{equation*}
K(i,j)=\phi(i)\wedge\phi(j) \;\;\;\mbox{ for some positive increasing function }\;\;\; \phi \,.
\end{equation*}
 To this end, we prove some specific properties of the stochastic min-driven coagulation process for this type of kernel. In fact, a crucial argument in the analysis is that this structure allows us to compare the evolution of the process from an arbitrary initial configuration with that starting from monodisperse initial data (that is, initial data of the form $n \ee_i$ for $n\ge 1$ and $i\ge 1$, $(\ee_i)_{i\ge 1}$ being the canonical basis of $\ell^1$ defined in Section~\ref{secdescrip}).

Before going on, we introduce some notations. If $Z\in\ell_\NN^1$ with $\|Z\|_1=n$, the vector $\left( S_1(Z),\ldots,S_n(Z) \right)\in \NN^n$ denotes the collection of the sizes of the particles encoded by $Z$ sorted in increasing order, that is,
\begin{equation}
\label{haddock}
S_m(Z):=1 \;\;\mbox{ if }\;\; 1\le m \le Z_1\,, \quad S_m(Z):=s \;\;\mbox{ if }\;\; 1+\sum_{j=1}^{s-1} Z_j \le m \le \sum_{j=1}^{s} Z_j \;\;\mbox{ and }\;\; 2 \le s \le n\,.
\end{equation}

Next, given an initial condition $X_0\in\ell_\NN^1$ with $n:=\|X_0\|_1$, let $X$ be the stochastic min-driven coagulation process starting from $X_0$ in Section~\ref{secdescrip} and recall that $T^{X_0}$ is defined by
$$
T^{X_0}=\inf\{t\ge 0 \ : \ \| X(t)  \|_1=1 \}\,.
$$
For $i\ge 1$, we also introduce the time
\begin{equation}
\label{milou}
T_i^{X_0} := \inf\{t>0 \ : \ X_1(t)=\ldots=X_i(t)=0\},
\end{equation}
when particles of size smaller or equal than $i$ have disappeared (note that the time $T_i^N$ defined in \eqref{semaphore} in Section~\ref{sec:csp} corresponds to $T_i^{X_0^N}$ with the notation introduced in \eqref{milou}). In addition, since $X_0$ contains $n$ particles, the stochastic process $X$ undergoes $n-1$ coalescence events between $t=0$ and $T^{X_0}$ and we define $L(m)$ to be the minimal size of $X$ after the $(m-1)^{\hbox{\tiny{th}}}$ coalescence event and before the $m^{\hbox{\tiny{th}}}$ coalescence event, $1\le m\le n-1$. Before the latter event, the rate of coagulation is $(n-m)\phi(L(m))$ since $K$ satisfies $K(i,j)=\phi(i)\wedge \phi(j)$. Consequently,
\begin{equation}
\label{tintin}
T^{X_0}=\sum_{m=1}^{n-1}\frac{\e_m}{(n-m)\phi(L(m))},
\end{equation}
where $(\e_m)_{1\le m\le n-1}$ is a sequence of i.i.d. random variables with law $exp(1)$.

\medskip

The first step towards the proof of Theorem~\ref{thc4} is a monotonicity property.

%%%%%%%%%%%%%%%%%%%%%%%%%%%%%%%%%%%%%%%%%%%%%%%%%%%%%%%%%%%%%%%%%%%%%%%%%%%%%%%%%%
\begin{lem}\label{lemdomi}
Let $X_0$ and $Y_0$ be two initial conditions in $\ell_\NN^1$ such that $\|X_0\|_1=\|Y_0\|_1$ and
\begin{equation}
\label{u6}
S_m(Y_0) \le S_m(X_0)  \ \text{ for all } \ 1\leq m\leq \|X_0\|_1\,.
\end{equation}
Then,  we can construct the stochastic min-driven coagulation processes starting from $X_0$ and $Y_0$ on the same probability space such that $T_i^{X_0} \le T_i^{Y_0}$ for all $i\ge 1$  and  $T^{X_0} \le T^{Y_0}$.
In particular, for all initial data $X_0\in\ell_\NN^1$,
$$
T_1^{X_0} \le T_1^{\|X_0\|_1 \ee_1} \;\;\;\mbox{ and }\;\;\; T^{X_0}\le T^{\|X_0\|_1 \ee_1}\,.
$$
\end{lem}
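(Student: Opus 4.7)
The plan is to build a coupling on a common probability space that drives both processes with the same exponential clocks and the same uniform ``partner ranks,'' then propagate pointwise domination of the sorted particle sizes through each coalescence event. The special form \eqref{Kmin} is essential here: since $\phi$ is increasing, whenever a configuration has $n$ particles with minimal size $\ell$, the total coagulation rate equals $\phi(\ell)(n-1)$ and, conditional on a jump, the partner of the designated minimal particle is uniform over the $n-1$ other particles. One can therefore encode the partner by its rank in a sorted list, which is exactly what makes the coupling go through.

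Set $n:=\|X_0\|_1=\|Y_0\|_1$. On a common probability space, take i.i.d.\ exponential$(1)$ random variables $(\e_m)_{1\le m\le n-1}$ and an independent i.i.d.\ sequence $(U_m)_{1\le m\le n-1}$ of uniforms on $(0,1)$. I build $X$ and $Y$ inductively across their $n-1$ merger events: just before the $m$-th event, list the $n-m+1$ remaining particles of each process in increasing order of size (ties broken arbitrarily), set $\sigma_m:=2+\lfloor(n-m)U_m\rfloor\in\{2,\ldots,n-m+1\}$, and merge the first-ranked particle with the $\sigma_m$-th ranked one in each of the two processes. Advance time by $\e_m/[(n-m)\phi(L^X(m))]$ in $X$ and by $\e_m/[(n-m)\phi(L^Y(m))]$ in $Y$, where $L^X(m),L^Y(m)$ denote the respective minimal sizes just before the $m$-th event. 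By the jump-rate description of Section~\ref{secdescrip} and the representation \eqref{tintin}, each marginal has the correct law.

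Writing $S_j^X(m),S_j^Y(m)$ for the sorted sizes after the $m$-th event, I claim $S_j^Y(m)\le S_j^X(m)$ for every $j$, by induction on $m$; the base case $m=0$ is hypothesis \eqref{u6}. For the inductive step, the new merged particle in $Y$ has size $S_1^Y(m-1)+S_{\sigma_m}^Y(m-1)\le S_1^X(m-1)+S_{\sigma_m}^X(m-1)$, while for each index $j\notin\{1,\sigma_m\}$ one has $S_j^Y(m-1)\le S_j^X(m-1)$, so the two unsorted post-merge multisets are pointwise dominated under the natural index matching. This pointwise domination survives sorting in increasing order: if $a_j\le b_j$ for all $j$, then for each $i$ the $i$ indices realising the $i$ smallest values of $b$ furnish $i$ values of $a$ bounded by $b_{(i)}$, which forces $a_{(i)}\le b_{(i)}$. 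In particular $L^Y(m)\le L^X(m)$ for every $m$, hence $\phi(L^Y(m))\le\phi(L^X(m))$ by monotonicity of $\phi$. Applying \eqref{tintin} with the shared $\e_m$ gives $T^{X_0}\le T^{Y_0}$ termwise. For $T_i^{X_0}$, write it as the partial sum up to $m_X(i):=\min\{m:S_1^X(m)>i\}$; the inequality $S_1^Y(m)\le S_1^X(m)$ forces $m_Y(i)\ge m_X(i)$, and combining this with the termwise comparison of summands yields $T_i^{X_0}\le T_i^{Y_0}$. The final assertions follow by taking $Y_0=\|X_0\|_1\,\ee_1$, for which $S_m(Y_0)=1\le S_m(X_0)$ trivially. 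The only delicate point is the preservation of pointwise domination at the merge step, which depends crucially on feeding the \emph{same} partner rank $\sigma_m$ into both processes and on the elementary sorting lemma just cited; everything else is routine.
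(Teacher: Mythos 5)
Your proposal is correct and follows essentially the same route as the paper: the same coupling (shared exponential clocks and a shared uniform partner rank in the sorted list, exploiting that the min-form of $K$ makes the partner uniform among the remaining particles), the same induction propagating domination of the sorted sizes through each merger, and the same conclusion via the termwise comparison of the sums and the monotonicity of $\phi$. The only cosmetic difference is that you justify preservation of domination after re-sorting by the general order-statistics lemma, whereas the paper writes out explicit max--min formulas for the re-sorted entries; both are valid.
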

%%%%%%%%%%%%%%%%%%%%%%%%%%%%%%%%%%%%%%%%%%%%%%%%%%%%%%%%%%%%%%%%%%%%%%%%%%%%%%%%%%

\begin{proof}
Let $X$ and $Y$ denote the stochastic min-driven coagulation processes starting from $X_0$ and $Y_0$, respectively, and define $n:=\|X_0\|_1  = \|Y_0\|_1$. Between $t=0$ and $T^{X_0}$, the process $X$ reaches $n$ different states $\left\{ \hat{X}(j) \ : \ 1\le j \le n-1 \right\}$ with $\hat{X}(0)=X_0$ and $\|\hat{X}(j)\|_1=n-j$. In other words, $\hat{X}(j)$ is the state of $X$ after the $j^{\hbox{\tiny{th}}}$ coalescence event and actually denotes $X(\theta_j)$, $\theta_j$ being the time at which the $j^{\hbox{\tiny{th}}}$ coalescence event occurs. Analogously, between $t=0$ and $T^{Y_0}$, the process $Y$ reaches $n$ different states $\left\{ \hat{Y}(j) \ : \ 1\le j \le n-1 \right\}$ with $\hat{Y}(0)=Y_0$ and $\|\hat{Y}(j)\|_1=n-j$.

We first prove by induction that we can construct the processes $X$ and $Y$ on the same probability space such that
\begin{equation}
\label{u6b}
S_m\left( \hat{Y}(j) \right) \le S_m\left( \hat{X}(j) \right)\,, \quad 1\le m \le n-j\,, \quad 1 \le j \le n-1\,.
\end{equation}
Owing to \eqref{u6}, this inequality is clearly fulfilled for $j=0$. Assume now that \eqref{u6b} holds true for some $j\in\{0,\ldots,n-2\}$ and set
$$
S_m^{X,j}:=S_m\left( \hat{X}(j) \right) \;\;\;\mbox{ and }\;\;\; S_m^{Y,j}:=S_m\left( \hat{Y}(j) \right)\,, \quad 1\le j \le n-i\,.
$$
Since the coagulation kernel $K$ is of the form \eqref{Kmin}, we may couple the two processes $X$ and $Y$ in such a way that $\hat{X}(j+1)$ is obtained by coalescing the particles of sizes $S_1^{X,j}$ and $S_k^{X,j}$ and $\hat{Y}(j+1)$ by coalescing the particles of sizes $S_1^{Y,j}$ and $S_k^{Y,j}$ with the same index $k$ chosen in $\{2,\ldots,n-i\}$ with uniform law.
Thus,
\begin{eqnarray*}
\left\{ S_m\left( \hat{X}(j+1) \right) \ : \ 1 \le m \le n-j-1 \right\} & = & \left\{ S_2^{X,j}, \ldots, S_{k-1}^{X,j}, S_{k+1}^{X,j},\ldots,S_{n-j}^{X,j} \right\} \cup \left\{ S_1^{X,j} + S_k^{X,j} \right\} \,,\\
\left\{ S_m\left( \hat{Y}(j+1) \right) \ : \ 1 \le m \le n-j-1 \right\} & = & \left\{ S_2^{Y,j}, \ldots, S_{k-1}^{Y,j}, S_{k+1}^{Y,j},\ldots,S_{n-j}^{Y,j} \right\} \cup \left\{ S_1^{Y,j} + S_k^{Y,j} \right\} \,.
\end{eqnarray*}
At this stage, the inequality \eqref{u6b} is not obvious as the reordering of the sizes can be different in $\hat{X}(j+1)$ and $\hat{Y}(j+1)$. The situation can be represented as follows:
$$
\begin{array}{lllclcll}
S_1^{Y,j} & \le \ldots \le & S_{k-1}^{Y,j} \leq \ldots \leq & S_1^{Y,j} +S_k^{Y,j} & \leq \ldots \leq & \ldots & \leq \ldots \leq & S_{n-i}^{Y,j}\,, \\
 & & & & & & & \\
S_1^{X,j} & \le \ldots \le & S_{k-1}^{X,j} \leq \ldots \leq & \ldots & \leq \ldots \leq & S_1^{X,j} + S_k^{X,j} & \leq \ldots \leq & S_{n-i}^{X,j}\,.
\end{array}
$$
Nevertheless, we observe that
$$ S_m\left( \hat{Y}(j+1) \right)\left\{
          \begin{array}{cl}
          S_{m+1}^{Y,j}   &   \text{for } 1\leq m\leq k-2\,,\\
          & \\
         \max{\left\{ \min{\left\{ S_{m+2}^{Y,j} , S_1^{Y,j} + S_k^{Y,j} \right\}} , S_{m+1}^{Y,j} \right\}}   &   \text{for } m\geq k-1\,,\\
        \end{array}
 \right.
 $$
 and
 $$ S_m\left( \hat{X}(j+1) \right)\left\{
          \begin{array}{cl}
          S_{m+1}^{X,j}   &   \text{for } 1\leq m\leq k-2\,,\\
          & \\
         \max{\left\{ \min{\left\{ S_{m+2}^{X,j} , S_1^{X,j} + S_k^{X,j} \right\}} , S_{m+1}^{X,j} \right\}}   &   \text{for } m\geq k-1\,,\\
        \end{array}
 \right.
 $$
from which \eqref{u6b} for $j+1$ readily follows thanks to \eqref{u6b} for $j$.

We next claim that the random number of coalescence events needed to exhaust the particles of size $i\ge 1$ is smaller for $X$ than for $Y$, that is, \begin{equation}
\label{u6c}
n_i^{X_0}\le n_i^{Y_0}\,, \qquad i\ge 1\,,
\end{equation}
where
\begin{eqnarray*}
n_i^{X_0} & := &  \inf{\left\{ j\in\{ 0,\ldots,n-1\}\ : \ S_1\left( \hat{X}(j) \right)\ge i+1 \right\}}\,, \\
n_i^{Y_0} & := &  \inf{\left\{ j\in\{ 0,\ldots,n-1\}\ : \ S_1\left( \hat{Y}(j) \right)\ge i+1 \right\}}\,.
\end{eqnarray*}
Indeed, we have $S_1\left( \hat{Y}(j) \right)\le S_1\left( \hat{X}(j) \right)\le i$ for $1\le j \le n_i^{X_0}-1$ by \eqref{u6b}.

We can now prove the lemma. For $i\ge 1$, we have
$$
T_i^{X_0} = \sum_{j=1}^{n_i^{X_0}} \frac{\varepsilon_j}{(n-j) \phi\left( S_1\left( \hat{X}(j-1) \right) \right)} \;\;\mbox{ and }\;\;
T_i^{Y_0} = \sum_{j=1}^{n_i^{Y_0}} \frac{\varepsilon_j}{(n-j) \phi\left( S_1\left( \hat{Y}(j-1) \right) \right)}\,,
$$
where $(\varepsilon_k)_{k\ge 1}$ is a sequence of i.i.d. random variables with law $exp(1)$. Concerning $T^{X_0}$ and $T^{Y_0}$, we have
 $$
T^{X_0} = \sum_{j=1}^{n-1} \frac{\varepsilon_j}{(n-j) \phi\left( S_1\left( \hat{X}(j-1) \right) \right)} \;\;\mbox{ and }\;\;
T^{Y_0} = \sum_{j=1}^{n-1} \frac{\varepsilon_j}{(n-j) \phi\left( S_1\left( \hat{Y}(j-1) \right) \right)}\,.
$$

 The expected result then follows by \eqref{u6b}, \eqref{u6c}, and the monotonicity of $\phi$.
\end{proof}

\medskip

We next prove that the expectation of the time $T_1^{X_0}$ after which all particles of size $1$ have disappeared is bounded independently of the initial condition $X_0$ (as soon as $X_0\neq \ee_1$). According to Lemma~\ref{lemdomi}, it will be sufficient to prove such a bound for monodisperse initial data of the form $n \ee_1$, $n\ge 2$.

%%%%%%%%%%%%%%%%%%%%%%%%%%%%%%%%%%%%%%%%%%%%%%%%%%%%%%%%%%%%%%%%%%%%%%%%%%%%%%%%%%
\begin{lem}\label{lemmajesp}
There exists $C>0$ such that, for any initial condition $X_0\in\ell_\NN^1$ with $X_0\neq \ee_1$,
$$\E(T_1^{X_0})\le C\,,$$
the time $T_1^{X_0}$ being defined in \eqref{milou}.
\end{lem}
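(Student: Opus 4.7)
The plan is to reduce to the monodisperse initial datum $X_0 = n \ee_1$ using Lemma~\ref{lemdomi}, and then to establish a uniform bound in the monodisperse case via a concentration argument on the number of mergers between two size-$1$ particles.

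First, for any $X_0 \in \ell_\NN^1$ with $n := \|X_0\|_1 \ge 2$, the trivial inequality $S_m(n\ee_1) = 1 \le S_m(X_0)$ combined with Lemma~\ref{lemdomi} yields $T_1^{X_0} \le T_1^{n\ee_1}$ in law; the case $\|X_0\|_1 = 1$ with $X_0 \ne \ee_1$ forces $X_{0,1} = 0$ and hence $T_1^{X_0} = 0$. It therefore suffices to show $\sup_{n \ge 2} \E[T_1^{n\ee_1}] < \infty$.

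For $X_0 = n\ee_1$, the minimal size stays at $1$ throughout $[0, T_1^{n\ee_1}]$, so the total coagulation rate at any state $X$ in this interval is $\phi(1)(\|X\|_1 - 1)$. If $K^*$ denotes the random number of coagulation events before $T_1^{n\ee_1}$, arguing as for \eqref{tintin},
$$
T_1^{n\ee_1} = \frac{1}{\phi(1)} \sum_{k=0}^{K^* - 1} \frac{\varepsilon_k}{n - k - 1},
$$
where $(\varepsilon_k)$ are i.i.d.\ $\mathrm{Exp}(1)$. Let $P$ count the $(1+1)$-type mergers performed in $[0, T_1^{n\ee_1}]$; the balance of consumed size-$1$ particles (two per $(1+1)$-merger, one otherwise) gives $K^* + P = n$. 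Hence, letting $H_m$ denote the $m$-th harmonic number,
$$
\phi(1)\, \E[T_1^{n\ee_1}] = \E\bigl[H_{n-1} - H_{P-1}\bigr] \le 1 + \E[\log(n/P)],
$$
and it remains to control $\E[\log(n/P)]$ uniformly in $n$.

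Consider the Markov chain $(S_k, U_k)$, with $S_k$ the number of size-$1$ particles and $U_k$ the number of $(1+1)$-mergers performed after $k$ steps, with transitions $(S,U) \to (S-2, U+1)$ with probability $p = (S-1)/(N-1)$ and $(S,U) \to (S-1, U)$ with probability $1-p$, where $N = S + U$. Since $U_k \le k$, we have the deterministic bound $S_k \ge n - 2k$, so for every $k \le n/4$, $S_k \ge n/2$ and $N_k = n - k \ge 3n/4$. A short computation yields $p_k = (S_k - 1)/(N_k - 1) \ge (n/2 - 1)/(3n/4 - 1) \ge 1/2$ as soon as $n \ge 4$. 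A standard inverse-CDF coupling then produces i.i.d.\ $\mathrm{Bern}(1/2)$ variables $(Y_j)$ such that the $(1+1)$-indicator at step $j$ dominates $Y_j$ a.s.\ whenever $j \le K^* \wedge \lfloor n/4 \rfloor$. On $\{K^* \ge n/4\}$ this gives $P \ge U_{\lfloor n/4 \rfloor} \ge Y_1 + \ldots + Y_{\lfloor n/4 \rfloor}$, while on $\{K^* < n/4\}$ one has $P = n - K^* > 3n/4 > n/16$. In either case $\{P < n/16\}$ is contained in $\{Y_1 + \ldots + Y_{\lfloor n/4 \rfloor} < n/16\}$, and a Chernoff bound on $\mathrm{Bin}(n/4, 1/2)$ (mean $n/8$) yields $\P(P < n/16) \le e^{-c n}$ for some $c > 0$ and $n$ sufficiently large.

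Finally, using $\log(n/P) \le \log n$ (since $P \ge 1$),
$$
\E[\log(n/P)] \le \log 16 + \log(n) \cdot \P(P < n/16) \le \log 16 + n\, e^{-c n},
$$
uniformly bounded for large $n$; the finitely many remaining $n$ are handled by the trivial estimate $\E[T_1^{n\ee_1}] \le H_{n-1}/\phi(1) < \infty$. The main obstacle is the implementation of the coupling between the (non-independent, but conditionally Bernoulli) indicators of $(1+1)$-events and i.i.d.\ $\mathrm{Bern}(1/2)$ variables, which becomes routine once the deterministic bound $S_k \ge n - 2k$ is exploited to guarantee $p_k \ge 1/2$ on the relevant range of steps.
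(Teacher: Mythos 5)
Your proof is correct, but it takes a genuinely different route from the paper's. Both arguments begin identically, reducing to the monodisperse datum $n\ee_1$ via Lemma~\ref{lemdomi}. From there the paper invokes Corollary~\ref{cor1} (and hence the entire hydrodynamic-limit machinery of Section~\ref{sec:csp}) to get $\P(T_1^{n\ee_1}>B_1+t_1)\le C n^{-1/4}$, and combines this with the crude almost-sure bound $T_1^{n\ee_1}\le \phi(1)^{-1}\sum_{m=1}^{n-1}\e_m/m$ and the Cauchy--Schwarz inequality to control the expectation on the bad event. You instead stay entirely within the stochastic model: conditioning on the jump chain turns $\phi(1)\,\E(T_1^{n\ee_1})$ into $\E(H_{n-1}-H_{P-1})$, where $P$ is the number of $(1+1)$-mergers, and the particle-count identity $K^*+P=n$ together with the deterministic bound $S_k\ge n-2k$ guarantees that the conditional probability of a $(1+1)$-merger exceeds $1/2$ for the first $\lfloor n/4\rfloor$ steps, so a domination by i.i.d.\ $\mathrm{Bern}(1/2)$ variables and a Chernoff bound give $\P(P<n/16)\le e^{-cn}$; this is enough since $\log(n/P)\le\log n$ deterministically. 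Your version is self-contained (it does not use Theorem~\ref{thc3}), yields an exponential rather than polynomial tail estimate, and makes transparent exactly where the structure $K(1,j)\equiv\phi(1)$ of \eqref{Kmin} enters (both in the total rate $\phi(1)(\|X\|_1-1)$ and in the uniform choice of the partner). The paper's version, by contrast, recycles the convergence result already proved and identifies the limit $t_1=1/\phi(1)$ of $\E(T_1^{n\ee_1})$, at the cost of a much heavier dependency. All the steps you flag as delicate (the conditional expectation given the jump chain, the inverse-CDF coupling on the range where $p_k\ge 1/2$, and the treatment of the event $\{K^*<n/4\}$) check out.
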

%%%%%%%%%%%%%%%%%%%%%%%%%%%%%%%%%%%%%%%%%%%%%%%%%%%%%%%%%%%%%%%%%%%%%%%%%%%%%%%%%%

\begin{proof} Let $n:=\|X_0\|_1$ be the initial number of particles. If $n=1$ and $X_0\neq \ee_1$, then $T_1^{X_0}=0$. So,  we assume that $n\ge 2$. By Lemma~\ref{lemdomi}, we have the stochastic domination $T_1^{X_0} \le T_1^{n\ee_1}$, so that
\begin{equation}
\label{u10}
\E(T_1^{X_0})\le \E(T_1^{n\ee_1})\,,
\end{equation}
and it suffices to obtain an upper bound on $\E(T_1^{n\ee_1})$ which does not depend on $n\ge 2$.

We consider  the solution $x$ to the deterministic min-driven coagulation equation \eqref{c5a} with monodisperse initial condition $x_0=(x_{i,0})_{i\ge 1}$ given by $x_{1,0}=1$ and $x_{i,0}=0$ for $i\ge 2$. It follows from Corollary~\ref{cor1} that
$$
\P\left(  |T_1^{n\ee_1}-t_1| > \frac{B_1}{n^{1/4}} \right) \leq \frac{A_1}{n^{1/4}}\,, \quad n\ge N_3(1)\,, $$
from which we deduce that there is $C>0$ such that
\begin{equation}\label{equat1}
\P(T_1^{n\ee_1} > B_1+t_1) \le \frac{C}{n^{1/4}}\,, \quad n\ge 2\,.
\end{equation}
Introducing the (random) number of coalescence events $n_1$ performed between $t=0$ and $T_1^{n\ee_1}$, we have
$$
T_1^{n\ee_1}=\sum_{m=1}^{n_1} \frac{\e_m}{(n-m)\phi(1)}\,,
$$
where $(\e_m)_{1\le m\le n-1}$ is a sequence of i.i.d. random variables with law $exp(1)$. Obviously, $n_1\le n-1$ which gives the bound
$$
T_1^{n\ee_1}\leq\frac{1}{\phi(1)}\sum_{m=1}^{n-1} \frac{\e_m}{m}\,.
$$
Since $\E(\e_m)=1$ and $\E(\e_m^2)=2$ for $1\le m\le n$, we deduce from \eqref{equat1}, the H\"older inequality, and the above estimate that
\begin{eqnarray*}
\E\left( T_1^{n\ee_1} \right)&=&\E\left( T_1^{n\ee_1}\ \mathds{1}_{[0,B_1+t_1]}(T_1^{n\ee_1}) \right) + \E\left( T_1^{n\ee_1}\ \mathds{1}_{(B_1+t_1,\infty)}(T_1^{n\ee_1}) \right)\\
&\le & B_1+t_1+ \frac{1}{\phi(1)}\ \sum_{m=1}^{n-1} \frac{1}{m}\ \E\left( \e_m\ \mathds{1}_{(B_1+t_1,\infty)}(T_1^{n\ee_1}) \right) \\
& \le & B_1+t_1 + \frac{1}{\phi(1)}\ \sum_{m=1}^{n-1} \frac{1}{m}\ \E\left( \e_m^2 \right)^{1/2}\ \P\left( T_1^{n\ee_1}> B_1+t_1 \right)^{1/2}\\
&\le & B_1+t_1+ \frac{C}{\phi(1) n^{1/8}} \ \sum_{m=1}^{n-1} \frac{1}{m} \\
& \le & B_1+t_1+C\ \frac{\ln{n}}{n^{1/8}}\,.
\end{eqnarray*}
Since $B_1$ and $t_1$ do not depend on $n$ (actually one has $t_1=1/\phi(1)$), we have established the expected upper bound from which Lemma~\ref{lemmajesp} follows by \eqref{u10}.
\end{proof}

\medskip

The next step is to establish a connection between the early stages of the dynamics of the processes starting from monodisperse initial data.

%%%%%%%%%%%%%%%%%%%%%%%%%%%%%%%%%%%%%%%%%%%%%%%%%%%%%%%%%%%%%%%%%%%%%%%%%%%%%%%%%%
\begin{lem}\label{lemmegaloi}
For $n\ge 2$ and $i\ge 1$ we have
$$
T_i^{n\ee_i}\;\overset{\hbox{\small{law}}}{=}\;\frac{\phi(1)}{\phi(i)}\ T_1^{n\ee_1}\,.
$$
\end{lem}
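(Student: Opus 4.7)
\noindent\textbf{Proof proposal for Lemma~\ref{lemmegaloi}.}
The plan is to construct an explicit coupling between the process $X^{(1)}$ starting from $n\ee_1$ and the process $X^{(i)}$ starting from $n\ee_i$ which shows that, up to the random time at which particles of minimal size disappear, the two processes are rescaled copies of one another, with sizes related by multiplication by $i$ and times related by multiplication by $\phi(1)/\phi(i)$.

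The key observation is the following structural fact. As long as all the original particles of $n\ee_1$ (resp.\ $n\ee_i$) have not been exhausted, the minimal size in $X^{(1)}$ (resp.\ $X^{(i)}$) is $1$ (resp.\ $i$). Moreover, all particles appearing in $X^{(i)}$ before time $T_i^{n\ee_i}$ have sizes that are multiples of $i$, since the initial sizes are multiples of $i$ and this property is preserved under coalescence. Thus, writing $\Phi: \ell_\NN^1 \to \ell_\NN^1$ for the map $(Y_j)_{j\ge 1} \mapsto (Z_k)_{k\ge 1}$ defined by $Z_{ij}=Y_j$ and $Z_k=0$ when $i\nmid k$, we have a natural bijection between configurations reachable by $X^{(1)}$ before $T_1^{n\ee_1}$ and configurations reachable by $X^{(i)}$ before $T_i^{n\ee_i}$.

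Next, I would exploit the special form \eqref{Kmin} of $K$: since $\phi$ is increasing and the minimum particle is involved in every coalescence event, the rate $K(\ell,j) = \phi(\ell)\wedge \phi(j) = \phi(\ell)$ is constant in $j\ge \ell$. Consequently, in state $Y\in\ell_\NN^1$ with minimal size $\ell_Y$, the probability that the minimum-sized particle coalesces with a specific type of partner depends only on the multiplicities $(Y_j)_{j\ge \ell_Y}$ (uniformly in the partner, up to multiplicity), and the total coalescence rate equals $\phi(\ell_Y)(\|Y\|_1 - 1)$. Thus, if we couple the two processes using a common sequence of uniform choices for the partner and a common sequence $(\e_m)_{m\ge 1}$ of i.i.d.\ $\mathrm{Exp}(1)$ random variables, the embedded discrete skeletons (sequences of states modulo $\Phi$) coincide; in particular, the (random) number $n_1$ of coalescence events needed to exhaust all size-$1$ particles in $X^{(1)}$ equals the number needed to exhaust all size-$i$ particles in $X^{(i)}$.

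Combining these two ingredients with the representation \eqref{tintin} adapted to the stopping times $T_1^{n\ee_1}$ and $T_i^{n\ee_i}$, and using that $L^{(1)}(m)=1$ and $L^{(i)}(m)=i$ for $1\le m \le n_1$, the coupling gives
\begin{equation*}
T_1^{n\ee_1} = \frac{1}{\phi(1)} \sum_{m=1}^{n_1} \frac{\e_m}{n-m}\,, \qquad T_i^{n\ee_i} = \frac{1}{\phi(i)} \sum_{m=1}^{n_1} \frac{\e_m}{n-m}\,,
\end{equation*}
from which the announced equality in distribution (in fact, an almost-sure identity under the coupling) follows immediately. The main technical care needed is in writing the coupling rigorously: I would construct both processes simultaneously from the same sequence of uniform and exponential variables, verify inductively that $X^{(i)}(\theta_m) = \Phi(X^{(1)}(\theta_m))$ at every coalescence instant $\theta_m$ (for $m\le n_1$), and check that the holding-time scaling is exactly $\phi(1)/\phi(i)$. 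The argument is essentially bookkeeping once the coupling is set up, and the only subtle point is to make sure that the identification of partners is consistent under $\Phi$ — which is precisely why the assumption \eqref{Kmin} is used.
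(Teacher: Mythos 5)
Your proposal is correct and follows essentially the same route as the paper: both arguments couple the two processes via a common sequence of uniform partner choices and i.i.d.\ exponential clocks, observe that the form \eqref{Kmin} makes the total rate $\phi(\ell)(\|Y\|_1-1)$ and the partner choice uniform, and conclude that the two stopping times are given by the same sum $\sum_{m=1}^{n_1}\e_m/(n-m)$ up to the factor $\phi(1)/\phi(i)$. Your write-up merely makes explicit (via the map $\Phi$ and the multiples-of-$i$ observation) the details that the paper leaves implicit in the phrase ``a coupling can be done as in the proof of Lemma~\ref{lemdomi}.''
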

%%%%%%%%%%%%%%%%%%%%%%%%%%%%%%%%%%%%%%%%%%%%%%%%%%%%%%%%%%%%%%%%%%%%%%%%%%%%%%%%%%

\begin{proof}
As in the proof of Lemma~\ref{lemdomi}, a coupling can be done between the processes starting from $n\ee_1$ and $n\ee_i$ so that
$$
T_1^{n\ee_1} = \sum_{m=1}^{n_1}\frac{\e_m}{(n-m)\phi(1)} \;\;\;\mbox{ and }\;\;\; T_i^{n\ee_i} = \sum_{m=1}^{n_1} \frac{\e_m}{(n-m)\phi(i)}
$$
with the same random number of coalescence events $n_1$ and sequence $(\e_m)_{1\le m\le n-1}$ of i.i.d. random variables with law $exp(1)$ for both processes.
\end{proof}

\medskip

\begin{proof}[Proof of Theorem~\ref{thc4}] Assume first that
$$
\sum_{i=1}^\infty \frac{1}{i\phi(i)}<\infty\,.
$$
Thanks to Lemma~\ref{lemdomi}, we just have to show that $\E(T^{n\ee_1})$ is bounded independently of $n\ge 1$.

To this end, we fix $n\ge 1$. Let us first notice that, if $n=1$, then $T^{n\ee_1}=0$. Assume now that $n\ge 2$ and for $i\ge 1$, let $X$ be the stochastic min-driven coagulation process starting from $n\ee_i$. Clearly, $T_j^{n\ee_i}=0$ for $1\le j \le i-1$ and we define the (random) number $n_\ast:=\|X(T_i^{n\ee_i})\|_1$ of particles in the system at time $T_i^{n\ee_i}$ and $Y:=X(T_i^{n \ee_i})$. Notice that $Y_j=X_j(T_i^{n\ee_i})=0$ for $1\le j\le 2i-1$ and the conservation of mass warrants that $n_\ast\le n/2$ as
$$
2i\ n_\ast = 2i\ \|X(T_i^{n\ee_i})\|_1 \le \|X(T_i^{n\ee_i})\|_{1,1} = \|n\ee_i\|_{1,1} = n i\,.
$$
Moreover, the properties of $Y$ and Lemma~\ref{lemdomi} yield the stochastic domination $T^Y \le T^{n_\ast \ee_{2i}}$.
Since
$$
T^{n \ee_i}\;\overset{\hbox{\small{law}}}{=}\;T_i^{n\ee_i} + T^Y\,,
$$
where, conditionally on $Y$, $T_i^{n\ee_i}$ and $T^Y$ are independent, it follows from Lemma~\ref{lemmegaloi} that
\begin{equation} \label{quiquidonc}
T^{n\ee_i}\le \frac{\phi(1)}{\phi(i)}\ T_1^{n\ee_1}+T^{n_\ast \ee_{2i}}\,.
\end{equation}

Let us now prove by induction on $n$ that the property 
$$ 
\mathcal{P}(n): \ \ \ \ \ \ \  \E(T^{n\ee_{2^i}})\le C \sum_{j=i}^\infty \frac{\phi(1)}{\phi(2^j)} \;\;\;\mbox{ for all }\;\; i\ge 0 \;\;\mbox{ and }\;\; 0\le m \le n\,, 
$$ 
holds true for all $n\ge 0$, where $C$ is the constant appearing in Lemma~\ref{lemmajesp}.

It is clear for $n=0$ . Consider$ n\geq 1$ and assume $\mathcal{P}(n-1)$. For $i\ge 0$, it follows from \eqref{quiquidonc} and $\mathcal{P}(n-1)$ that there is $n_{\ast} \leq n/2$ such that 
\begin{eqnarray*}
  \E(T^{n\ee_{2^i}}) &\le&  \frac{\phi(1)}{\phi(2^i)}\ \E(T_1^{n\ee_1}) +\E(T^{n_\ast \ee_{2^{i+1}}}) \\
&\le&  \frac{\phi(1)}{\phi(2^i)}\ \E(T_1^{n\ee_1}) + \sum_{m=1}^{n/2}\P(n^*=m)\ \E(T^{m \ee_{2^{i+1}}})   \\
&\le&  \frac{\phi(1)}{\phi(2^i)}\ \E(T_1^{n\ee_1}) + \sup_{1\le m\le n/2}\E(T^{m \ee_{2^{i+1}}})   \\
&\le&  \frac{\phi(1)}{\phi(2^i)}\ \E(T_1^{n\ee_1}) + C \sum_{j=i+1}^\infty \frac{\phi(1)}{\phi(2^j)} \ \ \ \ \ (\text{by induction hypothesis}) \\
&\le&   C \sum_{j=i}^\infty \frac{\phi(1)}{\phi(2^j)}\,,
\end{eqnarray*}
which proves $\mathcal{P}(n)$.

We then infer from Property~$\mathcal{P}(n)$ for $i=0$ that
$$
\E(T^{n\ee_1})\le C\phi(1)\ \sum_{i=0}^\infty \frac{1}{\phi(2^i)} < \infty\,,
$$
the convergence of the series $\sum 1/\phi(2^i)$ being ensured by that of $\sum 1/(i\phi(i))$ and the monotonicity of $\phi$.

\medskip

To prove the converse part of Theorem~\ref{thc4}, we assume that
$$
\sum_{i=1}^\infty \frac{1}{i\phi(i)}=\infty\,,
$$
and show that, for each constant $C>0$,  there exists a configuration $X_0$ such that $\E(T^{X_0}) \geq C$. More precisely, we will prove that
\begin{equation}\label{Tninfini}
\lim_{n\rightarrow \infty} \E(T^{n\ee_1})=\infty.
\end{equation}
Indeed, let $n\ge 2$. By \eqref{tintin}, we have
$$
T^{n\ee_1}=\sum_{m=1}^{n-1}\frac{\e_m}{(n-m)\phi(L(m))}\,,
$$
where $(\e_m)_{1\le m\le n-1}$ is a sequence of i.i.d. random variables with law $exp(1)$. The sequence $(L(m))_{1\le m \le n-1}$ is random but let us notice the bound
$$
L(m)\le \frac{n}{n-m+1}\le \frac{n}{n-m}\,, \quad 1 \le m \le n-1\,,$$
which follows from the conservation of mass since there remain $n-m+1$ particles in the system before the $m^{\hbox{\tiny{th}}}$ coalescence event. Therefore, thanks to the monotonicity of $\phi$,
$$
\E(T^{n\ee_1})\ge \sum_{m=1}^{n-1} \frac{1}{m\phi(n/m)}\,,$$
and the divergence of the series $\sum 1/(i\phi(i))$ ensures that
$$
\lim_{n\rightarrow \infty} \sum_{m=1}^{n-1}\frac{1}{m\phi(n/m)}=\int_{1}^\infty \frac{dx}{x\phi(x)}=\infty\,,$$
which completes the proof.
\end{proof}
%%%%%%%%%%%%%%%%%%%%%%%%%%%%%%%%%%%%%%%%%%%%%%%%%%%%%%%%%%%%%%%%%%%%%%%%%%%%%%%%%%
%%%%%%%%%%%%%%%%%%%%%%%%%%%%%%%%%%%%%%%%%%%%%%%%%%%%%%%%%%%%%%%%%%%%%%%%%%%%%%%%%%

%%%%%%%%%%%%%%%%%%%%%%%%%%%%%%%%%%%%%%%%%%%%%%%%%%%%%%%%%%%%%%%%%%%%%%%%%%%%%%%%%%
%%%%%%%%%%%%%%%%%%%%%%%%%%%%%%%%%%%%%%%%%%%%%%%%%%%%%%%%%%%%%%%%%%%%%%%%%%%%%%%%%%

\end{document}